\newcommand{\Stab}{\mathrm{Stab}}
\newcommand{\Supp}{\mathrm{Supp}}
\newcommand{\la}{\langle}
\newcommand{\ra}{\rangle}
\newcommand{\PL}{\widetilde{\mathrm{PL}}}
\newcommand{\pl}{\mathrm{PLF}}
\newcommand{\Homeo}{\mathrm{Homeo}}
\newcommand{\zz}{\Z[\frac 12]}
\newcommand{\F}{\mathcal{F}}
\newcommand{\dist}{{\mathrm{dist}}}
\newcommand{\be}{\begin{equation}}
\newcommand{\ee}{\end{equation}}
\newcommand {\N}{\mathbb{N}} 
\newcommand {\Z}{\mathbb{Z}}            
\newcommand {\R}{\mathbb{R}} 
\newcommand {\iv}{^{-1}}
\numberwithin{equation}{section}
\newcounter{AbcT}
\newcommand{\nc}{\newcommand}
\nc{\meet}{\wedge}
\nc{\op}{\operatorname}\nc{\FP}{\op{FP}}\nc{\FS}{\op{FS}}\nc{\FPhat}{\widehat{\op{FP}}}
\newtheorem {Theorem}    {Theorem}[section]
\newtheorem {Problem}    [Theorem]{Problem}
\newtheorem {Lemma}      [Theorem]    {Lemma}
\newtheorem {Corollary}   [Theorem] {Corollary}
\theoremstyle{remark}
\newtheorem {Remark}		 [Theorem]    {\bf{Remark}}
\newtheorem {Definition} [Theorem]    {\bf{Definition}}
\newcommand{\1}{\mathbf{1}}
\begin{document}

\title{On the stabilizers of finite sets of numbers in the R. Thompson group $F$}

\author{Gili Golan, Mark Sapir\thanks{The research of the first author was supported in part by a Fulbright grant and a post-doctoral scholarship of Bar-Ilan University, the research of the second author was supported in part by the NSF grants DMS 1418506,
DMS	1318716.}}
\date{}
\maketitle

\hskip .7 in Dedicated to Professor Yuri Burago on the occasion
     of his $80$th birthday

\abstract{We study subgroups $H_U$ of the R. Thompson group $F$ which are stabilizers of finite sets $U$ of numbers in the interval $(0,1)$. We describe the algebraic structure of $H_U$ and prove that the stabilizer $H_U$ is finitely generated if and only if $U$ consists of rational numbers. We also show that  such subgroups are isomorphic surprisingly often. In particular, we prove that if finite sets $U\subset [0,1]$ and $V\subset [0,1]$ consist of rational numbers which are not finite binary fractions, and $|U|=|V|$, then the stabilizers of $U$ and $V$ are isomorphic. In fact these subgroups are conjugate inside a subgroup $\bar F<\Homeo([0,1])$ which is the completion of $F$ with respect to what we call the Hamming metric on $F$. Moreover the conjugator can be found in a certain subgroup $\F < \bar F$ which consists of possibly infinite tree-diagrams with finitely many infinite branches. We also show that the group $\F$ is non-amenable.}

\tableofcontents

\section{Introduction}

R. Thompson group $F$ is one of the most interesting infinite finitely generated groups. It is usually defined as a group of piecewise linear increasing homeomorphisms of the unit interval with all break points of derivative finite dyadic fractions (i.e., numbers from $\Z[\frac 12]$) and all slopes powers of $2$. The group has many other descriptions (for some of them see Section \ref{sec:pre}). The group $F$ is finitely presented, does not have free non-cyclic subgroups, and satisfies many other remarkable properties which are the subject of numerous papers. One of the main questions about $F$ is whether it is amenable (the problem was first mentioned in print by Ross Geoghegan). Incorrect proofs of amenability and non-amenability of $F$ are published quite often, and some of these papers (despite having incorrect proofs) are quite interesting because they show deep connections of $F$ with diverse branches of mathematics. For example, \cite{Shav} shows why a mathematical physicist would be interested in R. Thompson group $F$, paper \cite{Moore} shows a deep connection with logic and Ramsey theory, and \cite{Wajn} shows a connection with graphs on surfaces. Quite recently Vaughan Jones discovered a striking connection between $F$ and planar algebras, subfactors and the knot theory \cite{Jones}. It turned out that just like braid groups, elements of $F$ can be used to construct all links. He also considered several linearized permutational representations of $F$ on the Schreier graphs of some subgroups of $F$ including the subgroup  $\overrightarrow F$ defined in terms of the corresponding sets of links. This motivated our renewed interest in subgroups of $F$ \cite{GS, GolanSapir}. It turned out that Jones' subgroup $\overrightarrow F$ is quite interesting. For example,  even though it is not maximal, there are finitely many (exactly two) subgroups of $F$ bigger than $\overrightarrow F$, i.e. $\overrightarrow F$ is of \emph{quasi-finite index} in $F$ \cite{GolanSapir}.

Other subgroups of interest include the stabilizers $H_U$ of finite sets $U\subset (0,1)$ first considered by Savchuk \cite{Sav1, Sav} who proved that the Schreier graph of $F/H_U$ is amenable for every finite $U$. In \cite{GolanSapir} we showed that each $H_U$ is also of quasi-finite index: every subgroup of $F$ containing $H_U$ is of the form $H_V$ where $V\subseteq U$.

Savchuk \cite{Sav1}  noticed that if $U$ contains an irrational number, then $H_U$ is not finitely generated. It is easy to see that if $U$ consists of numbers from $\zz$, then $H_U$ is isomorphic to the direct product of $|U|+1$ copies of $F$ and hence is $(2|U|+2)$-generated. Thus if $U\subset \zz$, then the isomorphism class of $H_U$ depends only on the size of $U$. In fact if $|U|=|V|$ and $U,V \subset \zz$, then it is easy to see that $H_U$ and $H_V$ are conjugate in $F$. This leaves open the following.

\begin{Problem}\label{p:7}
What is the structure of $H_U$ for an arbitrary finite $U$? When is $H_U$ finitely generated? When are $H_U$ and $H_V$ isomorphic?
\end{Problem}

In this paper, we continue the study of subgroups $H_U$ and answer the questions from Problem \ref{p:7}.  Every subset $U$ of $(0,1)$ is naturally subdivided into three subsets $U=U_1\cup U_2\cup U_3$ where $U_1$ consists of numbers from $\Z[\frac12]$ (i.e., numbers of the form $.u$ where $u$ is a finite word in $\{0,1\}$), $U_2$ consists of rational numbers not in $\zz$  (i.e., numbers  of the form $.ps^\N$ where $p, s$ are finite binary words and $s$ contains both digits $0$ and $1$), and $U_3$ consists of irrational numbers. We shall call $U_1, U_2, U_3$ the \emph{natural partition} of $U$. We show that $H_U$ is finitely generated  if and only if $U$ consists of rational numbers, that is, $U_3$ is empty. In that case we find the minimal number of generators of $H_U$ and classify subgroups $H_U$ up to isomorphism. In particular, we show (Theorem \ref{th:m1}) that if $U_1=U_3=\emptyset$, then, up to isomorphism, $H_U$ depends only on the size $|U|$. For example, $H_{\{\frac13\}}$ is isomorphic to $H_{\{\frac17\}}$. Moreover if $\tau(U)$ is the \emph{type}  of $U$ i.e. the word in the alphabet $\{1,2,3\}$ where the $i$-letter of $\tau(U)$ is $1$ if the $i$-th number in $U$ (with respect to natural order) is from $\zz$, $2$ if the $i$-th number in $U$ is rational not from $\zz$, and $3$ if the $i$-th number is irrational, then $H_U$ is isomorphic to $H_V$ provided $\tau(U)\equiv \tau(V)$  \footnote{$p\equiv q$ denotes letter-by-letter equality of words $p, q$.} (Theorem \ref{th:m1}).

Note that $H_{\{\frac 13\}}$ and $H_{\{\frac 17\}}$ are obviously not conjugate in $F$ or even in $\PL_2(\R)$, the group of piecewise linear homeomorphisms of $\R$ with all break points in $\zz$ and all absolute values of slopes powers of 2 (and so not in $\mathrm{Aut}(F)$ \cite{BrinCh}). We are going to prove (see Theorem \ref{t:pl}) that if $\tau(U)\equiv \tau(V)$, then $H_U$ and $H_V$ are conjugate in $\Homeo([0,1])$ and, in fact, in a relatively small subgroup $\F$ of $\Homeo([0,1])$. We will also show that one can construct a completion $\bar F$ of $F$ with respect to a certain metric (which is similar to the Hamming metric on the symmetric group $S_n$) and show that the natural embedding $F\to \bar F$ extends to an embedding $\F\to \bar F$. The groups $\F$ and $\bar F$ are interesting on their own. We prove, in particular, that $\F$ contains a non-abelian free subgroup, so $\bar F$ is a non-amenable completion of $F$.

\begin{Remark} Note  that Theorem \ref{th:m1} (the isomorphism theorem for some subgroups $H_U$) follows from Theorem \ref{t:pl} (the conjugacy theorem for some subgroups $H_U$). Nevertheless we decided to keep Theorem \ref{th:m1} because its proof gives additional algebraic information about subgroups $H_U$. 
\end{Remark}

Here is a combination of several results proved in this paper:

\begin{Theorem}\label{t:01} Let $U$ be a finite set of numbers from $(0,1)$ and let $U=U_1\cup U_2\cup U_3$ be the standard partition, $r=|U|$, $m_i=|U_i|$, $i=1,2,3$. Then
\begin{enumerate}

\item (Theorem \ref{thm:semi})
 $H_U$ is isomorphic to a semidirect product
$$H_U\cong [F,F]^{r+1}\rtimes \mathbb{Z}^{2m_1+m_2+2}.$$

\item (Theorem \ref{t:rat}) The subgroup $H_U$ is finitely generated if and only if $U_3$ is empty (that is, $U$ consists of rational numbers). In that case the smallest number of generators of $H_U$ is $2m_1+m_2+2$.
\item (Theorem \ref{t:dist}) Let $U_3=\emptyset$. Then the subgroup $H_U$ is undistorted in $F$.

\end{enumerate}

\end{Theorem}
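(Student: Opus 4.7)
For part (1), the plan is to introduce the slope homomorphism $\sigma\colon H_U\to\Z^{2m_1+m_2+2}$ that records $\log_2$ of the one-sided derivatives of $f\in H_U$ at $0$ and $1$, at each $u\in U_1$ (two coordinates, for left and right slopes), and at each $u\in U_2$ (one coordinate, since a non-dyadic $u$ is never a breakpoint of $f$, so the left and right slopes at $u$ coincide). Points of $U_3$ contribute no coordinate: if $u$ is irrational and $f$ is linear near $u$ with $f(x)=2^nx+c$, $c\in\zz$, then $f(u)=u$ forces $n=0$ and $c=0$, so $f$ is the identity on a whole neighborhood of $u$. The kernel of $\sigma$ consists of elements with trivial slope at each of the $r+2$ points $\{0\}\cup U\cup\{1\}$, and restricting such an element to the $r+1$ intervals $I_j=[u_{j-1},u_j]$ identifies $\ker\sigma$ with $[F,F]^{r+1}$, using that $[F,F]$ is the kernel of the abelianization $F\to\Z^2$. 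A splitting is produced by selecting, for each coordinate, an element of $H_U$ supported in a small neighborhood of the corresponding critical point and realizing the minimal allowed slope there: suitably scaled conjugates of $x_0$ at the dyadic and boundary points, and, for $u=p/q\in U_2$, an element of slope $2^{\mathrm{ord}_q(2)}$ built from the eventually periodic binary expansion of $u$.

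For part (2), since $[F,F]$ is perfect (in fact simple), the abelianization of $[F,F]^{r+1}\rtimes\Z^{2m_1+m_2+2}$ is $\Z^{2m_1+m_2+2}$, so any generating set of $H_U$ has at least $2m_1+m_2+2$ elements. When $U_3=\emptyset$, the splitting elements from part~(1) already generate $H_U$: in each interval $I_j$ a pair of the slope generators acts, after rescaling, as the standard pair $x_0,x_1$ of $F$, and the classical fact that $F=[F,F]\rtimes\Z^2$ is two-generated then produces all of $[F,F]_{I_j}$ via commutators and conjugates. When $U_3\neq\emptyset$, pick $u\in U_3$; every $f\in H_U$ is the identity on some neighborhood $(u-\ve_f,u+\ve_f)$, so for any finite $\{g_1,\dots,g_k\}\subset H_U$ with $\ve=\min_i\ve_{g_i}>0$, every element of $\la g_1,\dots,g_k\ra$ is the identity on $(u-\ve,u+\ve)$, whereas $H_U$ contains elements supported in arbitrarily small left-neighborhoods of $u$.

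For part (3), the plan is to use the tree-pair diagram model of $F$, in which the word length of $g\in F$ is linearly comparable to the number of leaves of its reduced tree-pair diagram. The main step is to show, assuming $U_3=\emptyset$, that the reduced tree-pair diagram of $h\in H_U$ (viewed as an element of $F$) can be refined at only linear cost in leaves into a $U$-compatible diagram, in which every $u\in U$ appears as the dyadic address of a leaf on both top and bottom trees; then $h$ can be read off as a word of comparable length in the generators produced in part~(2). For $u\in U_1$ this refinement is free; for $u=.p\overline{s}\in U_2$ it is controlled by $|p|+|s|$ per occurrence, since the address of $u$ is eventually periodic with period $s$. The main obstacle is quantitative bookkeeping in this last step: carefully converting a $U$-compatible diagram into a word in the generators of $H_U$ whose length is linear in the number of leaves, with constants independent of $h$. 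Part~(1) is essentially formal once $\sigma$ is set up; the real work is in the explicit generator count of~(2) and this metric comparison for~(3).
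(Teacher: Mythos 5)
Your part (1) is essentially the paper's own argument recast through a slope homomorphism $\sigma$, and its outline is sound, with two caveats. First, identifying $\ker\sigma$ with $[F,F]^{r+1}$ is immediate from the abelianization $F\to\Z^2$ only for intervals $I_j$ with dyadic endpoints; when an endpoint of $I_j$ lies in $U_2$ or $U_3$, the statement that the group of elements of $F$ supported in the open interval $(a,b)$ is isomorphic to $[F,F]$ is a genuine lemma (Lemma \ref{der} in the paper, proved by writing $F_{(a,b)}$ as an increasing union of subgroups $F_{[a_j,b_j]}$ and conjugating by a compatible sequence of elements), and your proposal does not supply it. Second, to see that the image of $\sigma$ is free abelian of rank $2m_1+m_2+2$ and is hit by your section, you need that the slope of any element of $H_U$ at $\beta\in U_2$ is a power of $2^{|s_\beta|}$ (Corollary \ref{cor:rat}); you allude to a ``minimal allowed slope'' but never prove what is allowed.

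The serious gap is the upper bound in part (2). The splitting elements of part (1) are supported in small neighborhoods of the points of $\{0,1\}\cup U$, so the subgroup they generate consists of elements supported in the union of those neighborhoods (in the paper it is exactly the complement $\Z^{2m_1+m_2+2}$); they do not come close to generating $H_U$. If instead you enlarge their supports so that ``in each interval a pair of them acts like $x_0,x_1$,'' the count fails: each $\beta\in U_2$ contributes only one generator, which must serve both adjacent intervals at once, and an interval with an endpoint in $U_2$ has a non-dyadic endpoint, so ``after rescaling'' there is no copy of $F$ there to generate --- locally at such a $\beta$ the group is an ascending HNN-extension whose stable letter is the branch-shortening element $g_\beta$, not a direct factor. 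The natural generating set for $U=U_2$ (a copy of $x_0$ in each of the $m+1$ dyadic intervals left after removing neighborhoods $[p_\beta]$, plus the $m$ elements $g_\beta$) has $2m+1$ elements; reducing this to $m+2$ is the actual content of Theorem \ref{thm:rational}, achieved by bundling the $x_0$-copies into two alternating products $x,y$ and recovering the individual copies by conjugating with the $g_\beta$'s (Lemmas \ref{left}, \ref{right}, \ref{x0}). Nothing in your sketch performs this reduction, so at best you get finite generation with too many generators; your lower bound and your proof that $U_3\neq\emptyset$ forces non-finite generation (via Corollary \ref{cor:irr}) are correct. Part (3) is a plan rather than a proof --- you acknowledge the missing quantitative step yourself --- and it contains a slip: a point of $U_2$ is not dyadic, hence can never be the address of a leaf; the paper instead multiplies $h$ by linearly many powers of the $g_\beta$'s to land in the direct product $K_U$ and applies Burillo's Property B to each factor, and it also needs the reduction (via Section \ref{sec:alg} and distortion of $F_{[a,b]}$ in $F$) to the case $U=U_2$, which your sketch omits.
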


In Section \ref{open}, we list some open problems.

\begin{Remark} After the first versions of our paper appeared on arXiv, Ralph Strebel informed us that several results of this paper, in particular Theorem 3.2, can be proved for generalizations of the R. Thompson groups considered in \cite{Bieri}.
\end{Remark}

\section{Preliminaries on $F$}\label{sec:pre}

\subsection{$F$ as a group of homeomorphisms}\label{ss:nf}

Recall that $F$ consists of all piecewise-linear increasing self-homeomorphisms of the unit interval with slopes of all linear pieces powers of $2$ and all break points of the derivative in $\zz$. The group $F$ is generated by two functions $x_0$ and $x_1$ defined as follows \cite{CFP}.

	\[
   x_0(t) =
  \begin{cases}
   2t &  \hbox{ if }  0\le t\le \frac{1}{4} \\
   t+\frac14       & \hbox{ if } \frac14\le t\le \frac12 \\
   \frac{t}{2}+\frac12       & \hbox{ if } \frac12\le t\le 1
  \end{cases} 	\qquad	
   x_1(t) =
  \begin{cases}
   t &  \hbox{ if } 0\le t\le \frac12 \\
   2t-\frac12       & \hbox{ if } \frac12\le t\le \frac{5}{8} \\
   t+\frac18       & \hbox{ if } \frac{5}{8}\le t\le \frac34 \\
   \frac{t}{2}+\frac12       & \hbox{ if } \frac34\le t\le 1 	
  \end{cases}
\]

The composition in $F$ is from left to right.

Every element of $F$ is completely determined by how it acts on the set $\zz$. Every number in $(0,1)$ can be described as $.s$ where $s$ is an infinite word in $\{0,1\}$. For each element $g\in F$ there exists a finite collection of pairs of (finite) words $(u_i,v_i)$ in the alphabet $\{0,1\}$ such that every infinite word in $\{0,1\}$ starts with exactly one of the $u_i$'s. The action of $F$ on a number $.s$ is the following: if $s$ starts with $u_i$, we replace $u_i$ by $v_i$. For example, $x_0$ and $x_1$  are the following functions:

\[
   x_0(t) =
  \begin{cases}
   .0\alpha &  \hbox{ if }  t=.00\alpha \\
    .10\alpha       & \hbox{ if } t=.01\alpha\\
   .11\alpha       & \hbox{ if } t=.1\alpha\
  \end{cases} 	\qquad	
   x_1(t) =
  \begin{cases}
   .0\alpha &  \hbox{ if } t=.0\alpha\\
   .10\alpha  &   \hbox{ if } t=.100\alpha\\
   .110\alpha            &  \hbox{ if } t=.101\alpha\\
   .111\alpha                      & \hbox{ if } t=.11\alpha\
  \end{cases}
\]
where $\alpha$ is any infinite binary word.
For the generators $x_0,x_1$ defined above, the group $F$ has the following finite presentation \cite{CFP}.
$$F=\la x_0,x_1\mid [x_0x_1^{-1},x_1^{x_0}]=1,[x_0x_1^{-1},x_1^{x_0^2}]=1\ra,$$ where $a^b$ denotes $b\iv ab$.

Sometimes, it is more convenient to consider an infinite presentation of $F$. For $i\ge 1$, let $x_{i+1}=x_0^{-i}x_1x_0^i$. In these generators, the group $F$ has the following presentation \cite{CFP}
$$\la x_i, i\ge 0\mid x_i^{x_j}=x_{i+1} \hbox{ for every}\ j<i\ra.$$

\subsection{Elements of F as pairs of binary trees} \label{sec:red}

Often, it is more convenient to describe elements of $F$ using pairs of finite binary trees drawn on a plane. Trees are considered up to isotopies of the plane. Elements of $F$ are  pairs of full finite binary trees $(T_+,T_-)$ which have the same number of leaves. Such a pair will sometimes be called a \emph{tree-diagram}.

If $T$ is a (finite or infinite) binary tree, a \emph{branch} in $T$ is a maximal simple path starting from the root. Every non-leaf vertex of $T$ has two outgoing edges: the left edge and the right edge.
If every left edge of $T$ is labeled by $0$ and every right edge is labeled by $1$, then every branch of $T$ is labeled by a (finite or infinite) binary word $u$.  We will usually ignore the distinction between a branch and its label.

Let $(T_+,T_-)$ be a tree-diagram where $T_+$ and $T_-$ have $n$ leaves. Let $u_1,\dots,u_n$ (resp. $v_1,\dots,v_n$) be the branches of $T_+$ (resp. $T_-$), ordered from left to right.
For each $i=1,\dots,n$ we say that the tree-diagram $(T_+,T_-)$ has a \emph{pair of branches} $u_i\rightarrow v_i$. The function $g$ from $F$ corresponding to this tree-diagram takes binary fraction $.u_i\alpha$ to $.v_i\alpha$ for every $i$ and every infinite binary word $\alpha$. We will also say that the element $g$ takes the branch $u_i$ to the branch $v_i$.
The tree-diagrams of the generators of $F$, $x_0$ and $x_1$, appear in Figure \ref{fig:x0x1}.

\begin{figure}[ht]
\centering
\begin{subfigure}{.5\textwidth}
  \centering
  \includegraphics[width=.5\linewidth]{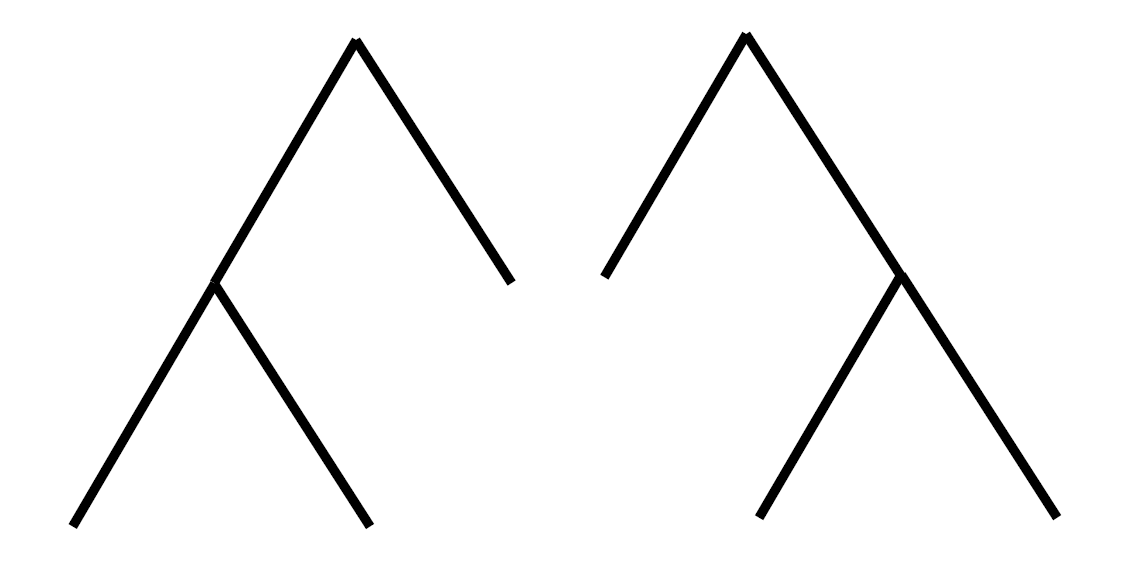}
  \caption{}
  \label{fig:x0}
\end{subfigure}%
\begin{subfigure}{.5\textwidth}
  \centering
  \includegraphics[width=.5\linewidth]{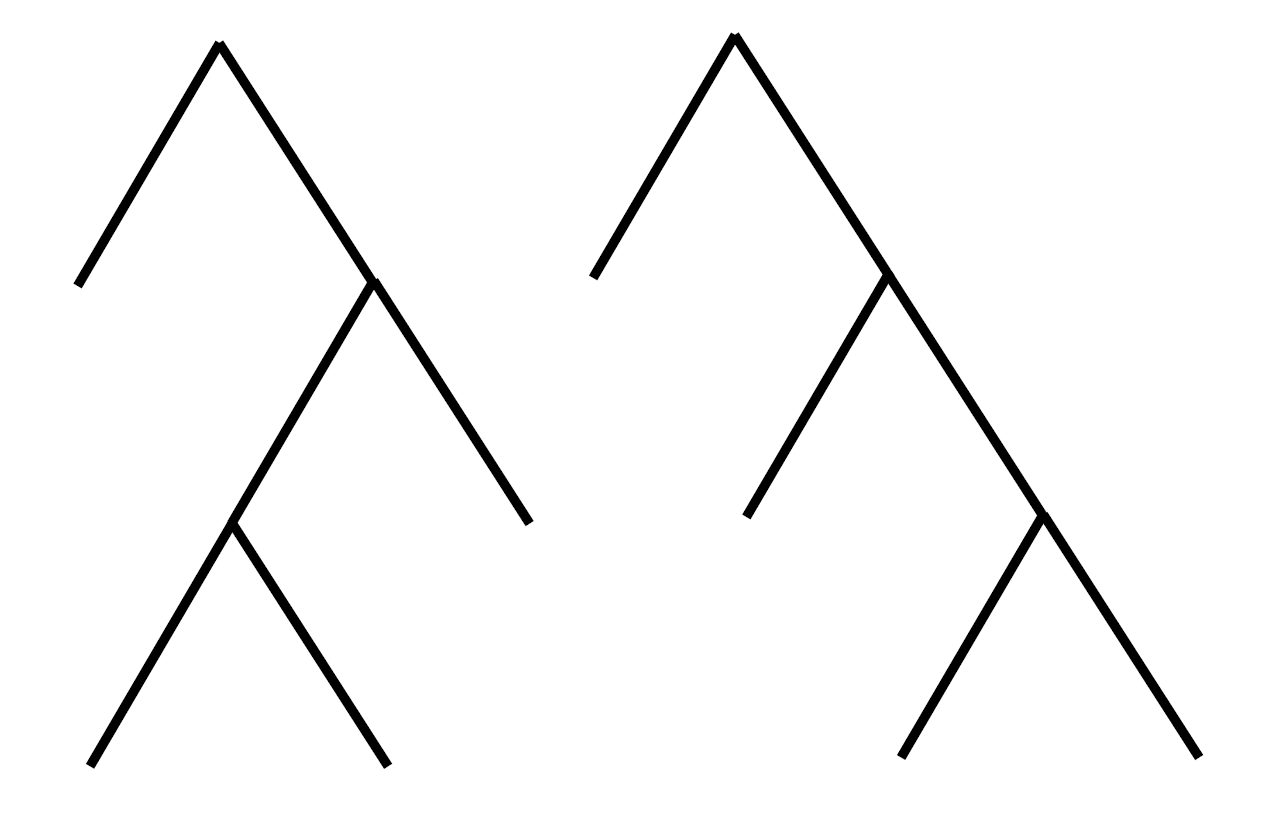}
  \caption{}
  \label{fig:x1}
\end{subfigure}
\caption{(a) The tree-diagram of $x_0$. (b) The tree-diagram of $x_1$. In both figures, $T_+$ is on the left and $T_-$ is on the right.}
\label{fig:x0x1}
\end{figure}

A \emph{caret} is a binary tree composed of a single vertex with two children. If $(T_+,T_-)$ is a tree-diagram, then attaching a caret to the $i$-th leaf of both $T_+$ and $T_-$ does not affect the function in $F$ represented by the tree-diagram $(T_+,T_-)$. The inverse action of \emph{reducing} common carets does not affect the function either (the pair $(T_+,T_-)$ has a \emph{common caret} if leaves number $i$ and $i+1$ have a common father in $T_+$ as well as in $T_-$).  Two pairs of trees $(T_+,T_-)$ and $(R_+,R_-)$ are said to be \emph{equivalent} if one results from the other by a finite sequence of inserting and reducing common carets. If $(T_+,T_-)$ does not have a common caret then $(T_+,T_-)$ is said to be \emph{reduced}. Every tree-diagram is equivalent to a unique reduced tree-diagram. Thus elements of $F$ can be represented uniquely by reduced tree-diagrams \cite{CFP}.

An alternative way of describing the function in $F$ corresponding to a given tree-diagram is the following. For each finite binary word $u$, we let the \emph{interval associated with $u$}, denoted by $[u]$, be the interval $[.u,.u1^\N]$. If $(T_+,T_-)$ is a tree-diagram for $f\in F$, we let $u_1,\dots,u_n$ be the branches of $T_+$ and $v_1,\dots,v_n$ be the branches of $T_-$. Then the intervals $[u_1],\dots,[u_n]$ (resp. $[v_1],\dots,[v_n]$) form a subdivision of the interval $[0,1]$. The function $f$ maps each interval $[u_i]$ linearly onto the interval $[v_i]$.

Below, when we say that a function $f$ has a pair of branches $u_i\rightarrow v_i$, the meaning is that some tree-diagram representing $f$ has this pair of branches. In other words, this is equivalent to saying that $f$ maps $[u_i]$ linearly onto $[v_i]$.

\begin{Remark}[See \cite{CFP}]\label{r:000}
The tree-diagram where both trees are just singletons plays the role of identity in $F$. Given a tree-diagram $(T_+^1,T_-^1)$, the inverse tree-diagram is $(T_-^1,T_+^1)$. If $(T_+^2,T_-^2)$ is another  tree-diagram then the product of $(T_+^1,T_-^1)$ and $(T_+^2,T_-^2)$ is defined as follows. There is a minimal finite binary tree $S$ such that $T_-^1$ and $T_+^2$ are rooted subtrees of $S$ (in terms of subdivisions of $[0,1]$, the subdivision corresponding to $S$ is the intersection of the subdivisions corresponding to $T_-^1$ and $T_+^2$). Clearly, $(T_+^1,T_-^1)$ is equivalent to a tree-diagram $(T_+,S)$ for some finite binary tree $T_+$. Similarly, $(T_+^2,T_-^2)$ is equivalent to a tree-diagram $(S,T_-)$. The \emph{product}  $(T_+^1,T_-^1)\cdot(T_+^2,T_-^2)$ is (the reduced tree-diagram equivalent to) $(T_+,T_-)$.
\end{Remark}

Obviously, the mapping of tree-diagrams to functions in $F$ respects operations defined in Remark \ref{r:000}.

\subsection{Choosing elements in $F$}

In most proofs in this paper, we choose elements with a given set of pairs of branches, or elements which map certain intervals or numbers from $[0,1]$ in a predetermined way. In doing so, we usually apply the next lemma. It follows directly from the proof of \cite[Lemma 2.1]{BrinCh}.

\begin{Lemma}\label{choice}
Let $[a_1,b_1],\dots,[a_m,b_m]$, $[c_1,d_1],\dots,[c_m,d_m]$ be closed subintervals of $[0,1]$ (possibly of length 0, i.e., points) with endpoints from $\zz$. Assume that the interiors of the intervals $[a_1,b_1],\dots,[a_m,b_m]$ (resp. $[c_1,d_1],\dots,[c_m,d_m]$) are pairwise disjoint and that the intervals $[a_1,b_1],\dots,[a_m,b_m]$ (resp. $[c_1,d_1],\dots, [c_m,d_m]$), considered as sub-intervals of $[0,1]$, are ordered from left to right. Assume in addition that the following conditions are satisfied.
\begin{enumerate}
\item $[a_i,b_i]$ has empty interior if and only if $[c_i,d_i]$ has empty interior.
\item $0\in [a_1,b_1]$ if and only if $0\in [c_1,d_1]$.
\item $1\in [a_m,b_m]$ if and only if $1\in [c_m,d_m]$.
\item for all $i=1,\dots,m-1$, the intervals $[a_i,b_i]$ and $[a_{i+1},b_{i+1}]$ share a boundary point if and only if the intervals $[c_i,d_i]$ and $[c_{i+1},d_{i+1}]$ share a boundary point.
\end{enumerate}
Then there is an element $f\in F$ which maps each interval $[a_i,b_i]$, $i=1,\dots,m$ onto the interval $[c_i,d_i]$. In addition, if for some $i$, $[a_i,b_i]$ has positive length and $\frac{b_i-a_i}{d_i-c_i}$ is an integer power of $2$ (in particular, if both $[a_i,b_i] $ and $[c_i,d_i]$ are dyadic intervals), then $f$ can be taken to map
$[a_i,b_i]$ linearly onto $[c_i,d_i]$.
\end{Lemma}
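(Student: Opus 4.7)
The plan is to realize the desired $f\in F$ as the element associated with a tree-diagram $(T_+,T_-)$, where $T_+$ and $T_-$ are finite binary trees giving dyadic subdivisions of $[0,1]$ whose leaves are in order-preserving bijection, in such a way that the positive-length intervals among the $[a_i,b_i]$ appear as unions of consecutive leaves of $T_+$ and the corresponding $[c_i,d_i]$ appear in matching positions as leaves of $T_-$.

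First, I would separate the indices according to condition (1) into those with positive-length intervals and those corresponding to common boundary points. For each positive-length $[a_i,b_i]$, since the endpoints lie in $\zz$, it can be written uniquely as a disjoint union of standard dyadic intervals $[u]$; that is, it is spanned by a finite collection of branches of a rooted subtree of the infinite binary tree at $[0,1]$, and similarly for each $[c_i,d_i]$. Next, I would fill the complementary gaps. By conditions (2) and (3), whether a gap abuts $0$ or $1$ is the same on both sides, and by condition (4), for each $i=1,\dots,m-1$ a gap lies strictly between $[a_i,b_i]$ and $[a_{i+1},b_{i+1}]$ if and only if the corresponding gap lies between $[c_i,d_i]$ and $[c_{i+1},d_{i+1}]$. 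Hence the two sides have the same number of gaps, ordered identically, and each such gap, having dyadic endpoints, is itself a union of standard dyadic intervals.

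Second, I would refine uniformly by inserting common carets as in Section \ref{sec:red}: I can further subdivide the leaves coming from corresponding source and target gaps (or corresponding positive-length $[a_i,b_i]$ and $[c_i,d_i]$) until they have the same number of sub-branches in matching positions. After this finite process, I obtain trees $T_+,T_-$ with the same number of leaves in bijection, and the tree-diagram $(T_+,T_-)$ represents an element $f\in F$ which maps each $[a_i,b_i]$ onto $[c_i,d_i]$ piecewise linearly, while automatically respecting the common boundary points coming from the zero-length indices. For the additional clause, when $[a_i,b_i]$ has positive length and $(b_i-a_i)/(d_i-c_i)$ is an integer power of $2$, I would refine $[a_i,b_i]$ and $[c_i,d_i]$ into the same number of equal-length standard dyadic sub-intervals; the constructed $f$ is then linear on $[a_i,b_i]$ with slope $(d_i-c_i)/(b_i-a_i)$.

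The main obstacle is purely combinatorial bookkeeping: verifying that conditions (1)--(4) are exactly what is needed to match source gaps to target gaps in the required order, so that the leaf sequences of $T_+$ and $T_-$ can be placed in correspondence. Once this matching is established, the lemma follows from the tree-diagram description of $F$ recalled in Section \ref{sec:red}, and this is essentially the content of the proof of \cite[Lemma 2.1]{BrinCh}.
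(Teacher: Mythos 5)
Your proposal is correct, and it is essentially the argument the paper has in mind: the paper gives no proof of its own but simply defers to the proof of \cite[Lemma 2.1]{BrinCh}, which is exactly this kind of explicit construction of a PL map out of matching dyadic subdivisions. Your tree-diagram version is a sound self-contained substitute: partition each positive-length $[a_i,b_i]$ and each complementary gap into standard dyadic intervals, use conditions (2)--(4) to see that gaps occur in matching positions on the source and target sides, refine by common carets until corresponding pieces have equally many leaves, and read off $f$ from the resulting pair of trees; the equal-length refinement handles the linearity clause since $\frac{b_i-a_i}{d_i-c_i}$ being a power of $2$ guarantees both refinements consist of standard dyadic intervals with a common number of pieces. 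One small imprecision worth tightening: a degenerate $[a_i,b_i]=\{a_i\}$ need not be a boundary point of a neighbouring $[a_j,b_j]$ (it may sit strictly inside a gap), so it is not quite right to identify the zero-length indices with ``common boundary points'' of the given intervals; what saves the argument is that $\{a_i\}$ is always an endpoint of the adjacent \emph{piece} of the induced partition of $[0,1]$ (a gap, a neighbouring interval, or $\{0\}$, $\{1\}$), and conditions (1)--(4) force the corresponding endpoint on the target side to be $c_i$, so $f(a_i)=c_i$ comes for free once the nondegenerate pieces are matched. With that observation made explicit, the proof is complete.
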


\begin{Remark}\label{r:choice}
The proof of Lemma 2.1 in \cite{BrinCh} also implies (in the notations of Lemma \ref{choice}) that if for each $i$ we choose an element $g_i\in F$ which maps $[a_i,b_i]$ onto $[c_i,d_i]$, then there is an element $f\in F$ such that for all $i\in\{1,\dots,m\}$, the restriction of $f$ to $[a_i,b_i]$ coincides with $g_i$.
\end{Remark}

\begin{Remark}
Unless explicitly stated otherwise, all closed intervals considered below have positive lengths.
\end{Remark}

\subsection{On branches and fixed points}

In this section we consider the relation between the set of fixed points of an element $f\in F$ and a tree-diagram $(T_+,T_-)$ representing it.
Let $(T_+,T_-)$ be a tree-diagram of an element $f\in F$. Let $u_i\rightarrow v_i$, $i=1,\dots,n$ be the pairs of branches of $(T_+,T_-)$. Since $f$ is linear on each interval $[u_i]$, if the interval is not fixed (i.e., if the words $u_i$ and $v_i$ are different), then the interval $[u_i]$ contains at most one fixed point. This fixed point can be found as follows.

Let $i\in\{1,\dots,n\}$ and assume that $u_i\neq v_i$, We can assume that $|u_i|\le |v_i|$ by replacing $f$ by $f^{-1}$ if necessary.  If $u_i$ is not a prefix of $v_i$ then the intervals $[u_i]$ and $[v_i]$ are disjoint and there are no fixed points in $[u_i]$. Otherwise, $v_i\equiv u_is_i$ for some nonempty suffix $s_i$. The number $\alpha_i=.u_is_i^{\N}$ is the unique number fixed in $[u_i]$. Note that if $s_i\equiv 0^k$ or $s_i\equiv 1^k$ for some $k\in\mathbb{N}$, then $\alpha_i$ is from $\zz$. Otherwise $\alpha_i$ is a rational (but not in $\zz$) fixed point of $f$.

\begin{Corollary}[Savchuk, \cite{Sav1}]\label{cor:irr}
Let $f\in F$ and assume that $f$ fixes an irrational number $\alpha$. Then $f$ fixes a neighborhood $(\alpha-\epsilon, \alpha+\epsilon)$ of $\alpha$.
\end{Corollary}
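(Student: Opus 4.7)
The plan is to apply directly the analysis given in the paragraphs preceding the corollary. Fix any tree-diagram $(T_+,T_-)$ representing $f$, with pairs of branches $u_i\to v_i$ for $i=1,\dots,n$. The intervals $[u_1],\dots,[u_n]$ subdivide $[0,1]$, and their endpoints $.u_i$, $.u_i 1^{\N}$ are all dyadic. Since $\alpha$ is irrational, it cannot be an endpoint of any $[u_i]$, so there is a unique index $i$ with $\alpha$ in the interior of $[u_i]$.

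Next I would apply the dichotomy from the preceding discussion to this particular interval. On $[u_i]$ the map $f$ is linear, so either $u_i\equiv v_i$ (and $f$ is the identity on $[u_i]$) or else $u_i\neq v_i$ and $[u_i]$ contains at most one fixed point of $f$. In the latter case, after possibly replacing $f$ by $f^{-1}$ (which has the same fixed point set), one may assume $|u_i|\le|v_i|$; if $u_i$ is not a prefix of $v_i$ then $[u_i]$ has no fixed point at all, contradicting $f(\alpha)=\alpha$. Otherwise $v_i\equiv u_i s_i$ for some nonempty suffix $s_i$, and the unique fixed point in $[u_i]$ is $.u_i s_i^{\N}$, which is rational. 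Since $\alpha$ is irrational, this case is impossible.

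Therefore $u_i\equiv v_i$, meaning $f$ acts as the identity on the entire closed interval $[u_i]=[.u_i,.u_i 1^{\N}]$. Because $\alpha$ lies in the interior of this interval, one can choose $\epsilon>0$ with $(\alpha-\epsilon,\alpha+\epsilon)\subset[u_i]$, and $f$ is the identity on this neighborhood, which is exactly the required conclusion.

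There is essentially no obstacle here: the argument is a direct reading of the paragraph immediately preceding the corollary, the only small point to verify being that $\alpha$ is not a boundary point of any $[u_i]$, which is immediate from irrationality of $\alpha$ versus diadicity of the endpoints $.u_i$ and $.u_i 1^{\N}$.
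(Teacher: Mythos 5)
Your proof is correct and is exactly the argument the paper intends: the preceding discussion shows every fixed point inside a non-fixed interval $[u_i]$ is rational, so an irrational fixed point must lie in (the interior of) an interval with $u_i\equiv v_i$, on which $f$ is the identity. The only detail you add — that $\alpha$, being irrational, is not a dyadic endpoint of the subdivision — is the right one to check, so this matches the paper's (implicit) proof.
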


The discussion above also implies the following.

\begin{Lemma}\label{rational}
Let $f\in F$ and assume that $f$ fixes a rational number $\alpha\notin \zz$. Let $\alpha=.ps^{\N}$ where
 $s$ is a minimal period of $\alpha$. Then $f$ has a pair of branches of the form $ps^{m_1}\rightarrow ps^{m_2}$ for some $m_1,m_2\ge 0$.
\end{Lemma}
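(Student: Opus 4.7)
The plan is to take an arbitrary tree-diagram $(T_+,T_-)$ of $f$ and refine it by adding common carets until a branch pair of the required form $ps^{m_1}\to ps^{m_2}$ appears. Since $\alpha\notin\zz$ is fixed by $f$, it lies strictly inside a unique branch interval $[u]$ of $T_+$, with corresponding pair $u\to v$. Replacing $f$ by $f^{-1}$ if necessary (the conclusion is symmetric in $m_1,m_2$), one may assume $|u|\le|v|$. If $u=v$, then $f$ is the identity on $[u]$, so refining until $ps^m$ is a branch of $T_+$ for some sufficiently large $m$ yields the pair $ps^m\to ps^m$, finishing this case. Otherwise, by the discussion preceding the lemma, $v=us_0$ for some nonempty suffix $s_0$ and $\alpha=.u(s_0)^\N$.

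Next, refine the diagram further by adding common carets at $u$ and $v$ following the bits of the binary expansion of $\alpha$ until the branch of $T_+$ containing $\alpha$ has length at least $|p|$. After each such caret the relation $v'=u's'$ for the new pair containing $\alpha$ is preserved, with $s'$ a cyclic rotation of the previous suffix, because the bits of $\alpha$ at positions $|u|$ and $|v|$ coincide by periodicity (they are $|s_0|$ apart in an expansion of period $|s_0|$). After relabeling, write $u=ps^kt$, where $k\ge 0$ and $t$ is a (possibly empty) proper prefix of $s$.

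The key step uses the minimality of $s$. Equating $.u(s_0)^\N=.ps^\N$ and reading the tail past position $|u|$ gives $(s_0)^\N=s^\N$ (if $t=\emptyset$) or, writing $s=tt'$ with $t'$ nonempty, $(s_0)^\N=(t't)^\N$. Since a cyclic rotation of a primitive word is primitive, in both situations the minimal period length of the infinite string on the right equals $|s|$, so $|s|$ divides $|s_0|$ and $s_0$ is forced to equal $s^j$ or $(t't)^j$, respectively, for some $j\ge 1$. A direct expansion yields $v=us_0=ps^{k+j}t$. If $t=\emptyset$, the pair $u\to v$ is already $ps^k\to ps^{k+j}$. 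If $t\ne\emptyset$, adding common carets along the path $t'$ starting at the leaves $u$ and $v$ produces a refined tree-diagram containing the branch pair $ut'=ps^{k+1}\to vt'=ps^{k+j+1}$, as required.

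The main, though elementary, obstacle is the minimality argument — identifying $s_0$ as a power of a cyclic rotation of $s$ — which relies on the standard combinatorial fact that cyclic rotations preserve primitivity of finite words. Everything else is direct manipulation with tree diagrams via the operations of adding common carets.
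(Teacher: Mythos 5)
Your proof is correct and follows essentially the same route as the paper's: refine a tree-diagram so that the branch pair through $\alpha$ has the form $u\to uw$, identify the fixed point as $.uw^{\N}$, and use minimality of $s$ to force $w$ to be a power of (a rotation of) $s$. The only difference is cosmetic — the paper normalizes $u$ all the way to $ps^{m_1}$ by inserting carets up front, while you allow a leftover prefix $t$ of $s$ and remove it with one final caret insertion along $t'$; your treatment of the period-minimality step is somewhat more explicit than the paper's.
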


\begin{proof}
Let $(T_+,T_-)$ be a tree-diagram for $f$ and let $u\rightarrow v$ be a pair of branches of $(T_+,T_-)$ such that $\alpha\in [u]$. Since $\alpha$ is fixed by $f$, $\alpha\in [v]$ and we can assume that $|u|\le |v|$. Similarly, by considering a non reduced form of $(T_+,T_-)$ we can assume that $u=ps^{m_1}$ for some $m_1\ge 0$. If $v=u$ we are done. Otherwise, let $v=uw$ for some nonempty word $w$. The unique fixed point of $f$ in $[u]$ is then $.uw^{\N}$. Thus $\alpha=.ps^{\N}=.us^{\N}=.uw^{.\N}$. Since $s$ is a minimal period of $\alpha$, $w=s^{k}$ for some $k\in\mathbb{N}$. Thus, $v=ps^{m_1+k}$ and for $m_2=m_1+k$ we get the result.
\end{proof}

Lemma \ref{rational} implies the following.

\begin{Corollary}\label{cor:rat}
Let $\alpha=.ps^\N$ be a rational number not in $\zz$ and assume that $s$ is a minimal period of $\alpha$. If $f\in F$ fixes $\alpha$ and has slope $2^a$ at $\alpha$, then $a$ is divisible by the length of $s$.
\end{Corollary}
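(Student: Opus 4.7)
The plan is to reduce Corollary \ref{cor:rat} directly to Lemma \ref{rational} together with a one-line slope computation. By Lemma \ref{rational}, the hypothesis that $f$ fixes $\alpha = .ps^{\N}$ (with $s$ a minimal period) guarantees that $f$ has a pair of branches of the form $ps^{m_1}\to ps^{m_2}$ for some integers $m_1,m_2\ge 0$. Concretely, this means $f$ maps the dyadic interval $[ps^{m_1}]$ linearly onto $[ps^{m_2}]$.

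First I would observe that because $\alpha \notin \zz$, the point $\alpha$ is not a break point of $f$, so the slope of $f$ at $\alpha$ is unambiguously defined and agrees with the slope of whichever linear piece of $f$ contains $\alpha$ in its interior. Since $\alpha = .ps^{\N}$ begins with $ps^{m_1}$, it lies in the closed interval $[ps^{m_1}]$; in fact it lies in its interior (because a finite binary prefix of $\alpha$ cannot equal $\alpha$ itself, as $\alpha$ is irrational dyadically), so the slope of $f$ at $\alpha$ is precisely the slope of the linear map $[ps^{m_1}]\to [ps^{m_2}]$.

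Next I would compute that slope. The interval $[u]=[.u,.u1^{\N}]$ associated with a binary word $u$ of length $\ell$ has length $2^{-\ell}$. Hence
\[
\mathrm{slope} \;=\; \frac{\mathrm{length}\,[ps^{m_2}]}{\mathrm{length}\,[ps^{m_1}]} \;=\; \frac{2^{-(|p|+m_2|s|)}}{2^{-(|p|+m_1|s|)}} \;=\; 2^{(m_1-m_2)|s|}.
\]
Setting this equal to $2^a$ gives $a = (m_1-m_2)|s|$, which is divisible by $|s|$, as required.

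I don't anticipate a real obstacle here: the entire content is extracted from Lemma \ref{rational}, and the only point one has to be slightly careful about is that the branch pair supplied by Lemma \ref{rational} indeed controls the slope at $\alpha$ itself (rather than at some other point in $[ps^{m_1}]$), which is why I emphasize that $\alpha\notin \zz$ forces $\alpha$ to lie in the interior of a linear piece.
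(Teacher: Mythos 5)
Your proof is correct and follows exactly the paper's argument: invoke Lemma \ref{rational} to obtain a pair of branches $ps^{m_1}\to ps^{m_2}$ and note that the slope on $[ps^{m_1}]$ is $2^{(m_1-m_2)|s|}$. The extra remark that $\alpha\notin\zz$ places $\alpha$ in the interior of $[ps^{m_1}]$, so this linear piece really governs the slope at $\alpha$, is a harmless (and welcome) elaboration of a point the paper leaves implicit.
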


\begin{proof}
By Lemma \ref{rational}, $f$ has a pair of branches of the form $ps^{m_1}\rightarrow ps^{m_2}$ for some $m_1,m_2\ge 0$. The slope of $f$ on the interval $[ps^{m_1}]$ is $2^{(m_1-m_2)|s|}$.
\end{proof}

\subsection{Natural copies of $F$}\label{sec:copies}

Let $f$ be a function in Thompson group $F$. The \emph{support of $f$}, denoted $\Supp(f)$, is the closure in $[0,1]$ of the subset $\{x\in(0,1):f(x)\neq x\}$. We say that $f$ \emph{has support in an interval $J$} if the support of $f$ is contained in $J$. Note that in this case the endpoints of $J$ are necessarily fixed by $f$. Hence the set of all functions from $F$ with support in $J$ is a subgroup of $F$. We denote this subgroup by $F_J$.

Let  $S$ be a subset of $[0,1]$, the notation $\Stab(S)$ will be used for the pointwise stabilizer of $S$ in $F$. Thus, if $f$ has support in a closed interval $[a,b]$, then $f\in \Stab([0,a]\cup[b,1])$. We note that if $f$ has support in an interval $J$ and $g\in F$, then $f^g$ has support in the interval $g(J)$. Similarly, $F_J^g=F_{g(J)}.$

Thompson group $F$ contains many copies of itself (see \cite{Brin}). The copies of $F$ we will be interested in will be of the following simple form. Let $a$ and $b$ be numbers from $\zz$ and consider the subgroup $F_{[a,b]}$.
We claim that $F_{[a,b]}$ is isomorphic to $F$.  Note that $F$ can be viewed as a subgroup of $\pl_2(\mathbb{R})$ of all piecewise linear homeomorphisms of  $\R$ with finite number of finite dyadic break points and absolute values of all slopes powers of 2.  Let $f\in \pl_2(\mathbb{R})$ be a function which maps $0$ to $a$ and $1$ to $b$, (such a function clearly exists). Then $F^f$ is the subgroup of $\pl_2(\mathbb{R})$  of all orientation preserving homeomorphisms with support in $[a,b]$, that is, $F^f=F_{[a,b]}$.

Let $u$ be a finite binary word and $[u]$ be the interval associated with it. The isomorphism between $F$ and $F_{[u]}$ can also be defined by  using tree-diagrams. Let $g$ be an element of $F$ represented by a tree-diagram $(T_+,T_-)$. We map $g$ to an element in $F_{[u]}$, denoted by $g_{[u]}$ and referred to as the \emph{copy of $g$ in $F_{[u]}$}.
To construct the element $g_{[u]}$ we start with a minimal finite binary tree $T$ which contains the branch $u$. We take two copies of the tree $T$. To the first copy, we attach the tree $T_+$ at the end of the branch $u$. In the second copy we attach the tree $T_-$ at the end of the branch $u$. The resulting trees are denoted by $R_+$ and $R_-$, respectively. The element $g_{[u]}$ is the one represented by the tree-diagram $(R_+,R_-)$. Note that if $g$ consists of pairs of branches $v_i\to w_i, i=1,...,k,$ and $B$ is the set of branches of $T$ which are not equal to $u$, then $g_{[u]}$ consists of pairs of branches $uv_i\to uw_i, i=1,...,k$, and $p\to p, p\in B$.

For example, the copies of the generators $x_0,x_1$ of $F$ in $F_{[0]}$ are depicted in Figure \ref{fig:0x0x1}.
It is obvious that these copies generate the subgroup $F_{[0]}$.

\begin{figure}[ht]
\centering
\begin{subfigure}{.5\textwidth}
  \centering
  \includegraphics[width=.5\linewidth]{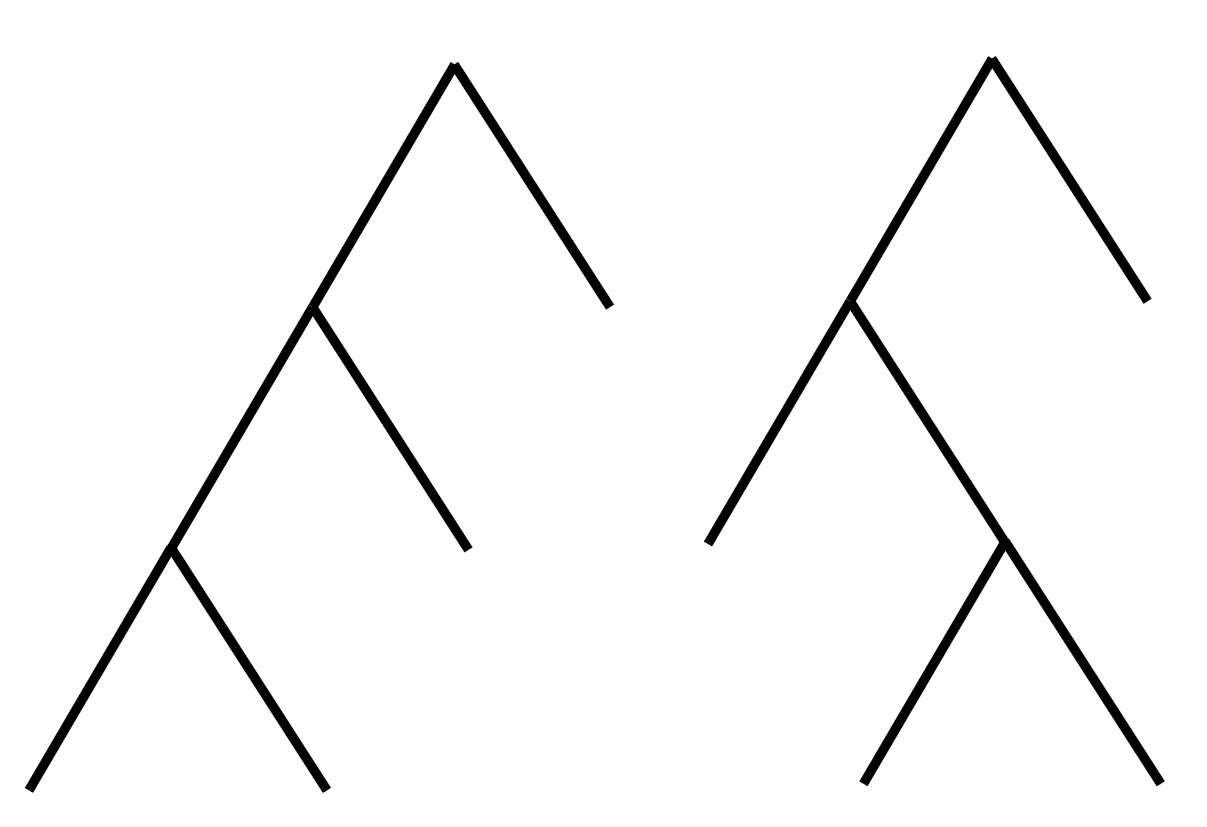}
  \caption{The tree-diagram of $(x_0)_{[0]}$}
  \label{fig:0x0}
\end{subfigure}%
\begin{subfigure}{.5\textwidth}
  \centering
  \includegraphics[width=.5\linewidth]{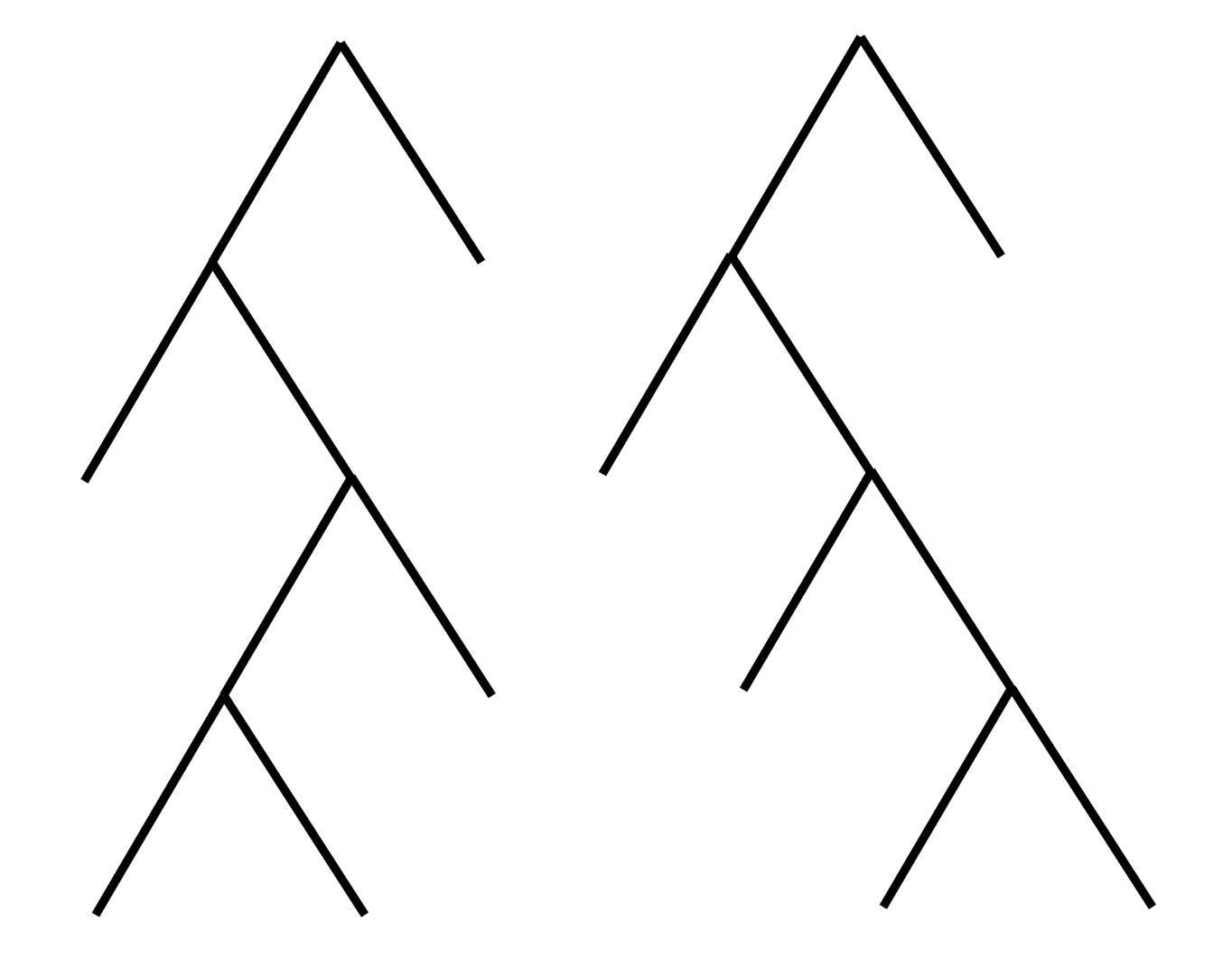}
  \caption{The tree-diagram of $(x_1)_{[0]}$}
  \label{fig:0x1}
\end{subfigure}
\caption{}
\label{fig:0x0x1}
\end{figure}

The isomorphism above guarantees that if $f,g\in F$ then $f_{[u]}g_{[u]}=(fg)_{[u]}$. Notice that in terms of branches, the copy $g_{[u]}$ of an element $g\in F$ can be characterized as the element in $F_{[u]}$ which for each pair of branches $v\rightarrow w$ of $g$ takes the branch $uv$ to the branch $uw$.

Using this isomorphism, we define an addition operation in Thompson group $F$ as follows. We denote by $\1$ the trivial element in $F$. We define the sum of an element $g\in F$ with the trivial element $\1$, denoted by $g\oplus\1$,
to be the copy of $g$ in $F_{[0]}$. Similarly, the sum of $\1$ and $g$, denoted by $\1\oplus g$, is the copy of $g$ in $F_{[1]}$. If $g,h\in F$ we define the \emph{sum} of $g$ and $h$, denoted by $g\oplus h$, to be the product $(g\oplus \1)(\1\oplus h)$. It is easy to see that for $g=\1$ or $h=\1$ this definition coincides with the previous one.
Note that $x_{1}=\1\oplus x_0$. In particular, $x_0$ and $\1\oplus x_0$ generate the whole $F$.
If we denote by $\zeta$ the function $t\mapsto 1-t$ from $\Homeo([0,1])$, then $F^\zeta=F$ and $(g\oplus h)^{\zeta}=h^{\zeta}\oplus g^{\zeta}$. One can check that $x_0^{\zeta}=x_0^{-1}$. Since $(x_0\oplus \1)^\zeta=\1\oplus x_0^{-1}=x_1^{-1}$ and $x_0^{-1},x_1^{-1}$ clearly generate $F$, we get that $x_0$ and $x_0\oplus \1$ generate $F$.

Note also that if $G$ is a subgroup of $F$, then the subgroup $\1\oplus G=\{\1\oplus g:g\in G\}$ is isomorphic to $G$. Similarly for $G\oplus \1$.

\section{The structure of stabilizers of finite sets}

It is known \cite{CFP}, that the derived subgroup of $F$ is exactly the subgroup $F_{(0,1)}$ of all functions with  support in $(0,1)$. Equivalently, $[F,F]$ is the subgroup of all functions with slope $1$ both at $0^+$ and at $1^-$.

\begin{Lemma}\label{der}
Let $a<b$ be any two numbers in $[0,1]$. Then the group $F_{(a,b)}$ of all functions with support in $(a,b)$ is isomorphic to the derived subgroup of $F$.
\end{Lemma}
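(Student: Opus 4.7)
The plan is to exhibit an explicit isomorphism $F_{(a,b)}\to F_{(0,1)}=[F,F]$ by conjugation with a carefully chosen increasing piecewise linear homeomorphism $\psi\colon[a,b]\to[0,1]$. The key point is to arrange that $\psi$ has possibly infinitely many break points accumulating only at $a$ and $b$, but satisfies (i) every break point of $\psi$ lies in $\Z[\tfrac12]$, (ii) its image under $\psi$ lies in $\Z[\tfrac12]$, and (iii) every slope of $\psi$ is an integer power of $2$. Given such a $\psi$, define
\[
\Psi\colon F_{(a,b)}\longrightarrow F_{(0,1)},\qquad \Psi(f)(y)=\psi\bigl(f(\psi^{-1}(y))\bigr),
\]
extending by the identity outside $[a,b]$ when $a>0$ or $b<1$.

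To build $\psi$, I choose a two-sided sequence of dyadic numbers $\cdots<p_{-1}<p_0<p_1<\cdots$ in $(a,b)$ with $p_{-n}\searrow a$, $p_n\nearrow b$, and arranged so that every consecutive difference $p_{n+1}-p_n$ is a pure power of two; if $a$ or $b$ is itself in $\Z[\tfrac12]$ the corresponding side of the sequence is taken to be finite. Such sequences exist even when $a,b$ are irrational: for instance, the ceiling dyadic approximations $\lceil 2^na\rceil/2^n$, after discarding repeats, form a dyadic sequence decreasing to $a$ whose successive differences are of the form $2^{-k}$. Pick an analogous sequence $\cdots<q_{-1}<q_0<q_1<\cdots$ in $(0,1)$ with $q_{-n}\searrow 0$, $q_n\nearrow 1$, and pure-power-of-two differences. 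Setting $\psi(p_n)=q_n$ and extending linearly on each $[p_n,p_{n+1}]$ makes each slope the ratio of two powers of two, hence a power of two, while keeping all break points (on both sides) dyadic.

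Given $f\in F_{(a,b)}$, its support is a finite union of closed intervals with endpoints in $\Z[\tfrac12]$ and, being contained in $(a,b)$, lies inside some $[c,d]\subset(a,b)$ with $c,d\in\Z[\tfrac12]$. Only finitely many of the $p_n$'s fall in $[c,d]$, so $\psi\!\restriction_{[c,d]}$ is a finite PL homeomorphism with dyadic break points and power-of-two slopes, mapping $\Z[\tfrac12]\cap[c,d]$ into $\Z[\tfrac12]$. Therefore $\Psi(f)$ equals the identity outside $[\psi(c),\psi(d)]\subset(0,1)$ and is piecewise linear with dyadic break points and power-of-two slopes inside it; so $\Psi(f)\in F_{(0,1)}$. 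The map $\Psi$ is a group homomorphism since conjugation is, is injective because $\psi$ is a homeomorphism, and is surjective by running the symmetric argument using $\psi^{-1}\colon[0,1]\to[a,b]$, which shares the same dyadicity properties.

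The principal obstacle is the construction of $\psi$---specifically, producing dyadic sequences whose successive differences are pure powers of two and accumulate at the (possibly irrational) endpoints $a,b$. The remainder of the argument is bookkeeping: once $\psi$ is in hand, verifying that $\Psi(f)\in F$ is immediate because every $f\in F_{(a,b)}$ has finitely many break points and $\psi$ has only finitely many break points in any closed subinterval of $(a,b)$.
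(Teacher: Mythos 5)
Your proposal is correct, but it takes a somewhat different route from the paper's. The paper never writes down a single global conjugator: it exhausts $F_{(a,b)}$ and $F_{(0,1)}$ by increasing unions of subgroups $F_{[a_j,b_j]}$ and $F_{[c_j,d_j]}$ with endpoints in $\zz$, uses Remark \ref{r:choice} to produce elements $g_j\in F$ with $g_j(a_j)=c_j$, $g_j(b_j)=d_j$ that are compatible (each $g_j$ agrees with $g_{j-1}$ on $[a_{j-1},b_{j-1}]$), and defines the isomorphism as the direct limit of the conjugations by the $g_j$. You instead build one explicit conjugating homeomorphism $\psi$ with breakpoints accumulating at $a$ and $b$, which forces you to prove the one genuinely nontrivial point of your approach --- that dyadic sequences converging to an arbitrary endpoint can be chosen with successive gaps that are pure powers of $2$ (your ceiling/floor approximation argument, which is correct: $2\lceil 2^na\rceil-\lceil 2^{n+1}a\rceil\in\{0,1\}$, so after discarding repeats consecutive differences are $2^{-k}$). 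The paper's route avoids this arithmetic entirely, since Lemma \ref{choice} already supplies elements of $F$ between any prescribed dyadic configurations; what your route buys is an explicit formula and, more interestingly, a conjugator that is essentially an element of the enlarged group $\F$ of Section \ref{s:conj} (compare Lemmas \ref{111} and \ref{222}), so your proof anticipates the stronger conjugacy statement of Theorem \ref{t:pl} rather than just the abstract isomorphism. Two small points you should make explicit: the two one-sided sequences must be spliced in the middle by finitely many dyadic points with power-of-two gaps (easy, via the binary expansion of the difference of two dyadics, but unstated), and surjectivity requires enlarging the support interval of a given $g\in F_{(0,1)}$ to one of the form $[q_{-m},q_m]$ so that $\psi^{-1}$ restricted to it is a finite dyadic PL map.
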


\begin{proof}
We prove that $F_{(a,b)}$ is isomorphic to $F_{(0,1)}=[F,F]$.
Let $\{a_j\}_{j\in\N}$ and $\{b_j\}_{j\in \N}$ be sequences of numbers from $\zz$ such that
\begin{enumerate}
\item[(1)] $\{a_j\}_{j\in\N}$ is strictly decreasing and converges to $a$,
\item[(2)] $\{b_j\}_{j\in\N}$ is strictly increasing and converges to $b$; and
\item[(3)] $a_1<b_1$.
\end{enumerate}

Similarly, let $\{c_j\}_{j\in\N}$ and $\{d_j\}_{j\in \N}$ be sequences
of numbers from $\zz$ such that
\begin{enumerate}
\item[(1)] $\{c_j\}_{j\in\N}$ is strictly decreasing and converges to $0$,
\item[(2)]  $\{d_j\}_{j\in\N}$ is strictly increasing and converges to $1$; and
\item[(3)] $c_1<d_1$.
\end{enumerate}

Recall that $F_{[a_j,b_j]}$ is the subgroup of $F$ of all elements with support in $[a_j,b_j]$. Thus, $F_{(a,b)}$ is the increasing union of subgroups
$$F_{(a,b)}=\bigcup_{j\in\N}F_{[a_j,b_j]}$$ (each of these subgroups is a copy of $F$, see Section \ref{sec:copies}).
Similarly,
$$F_{(0,0)}=\bigcup_{j\in\N}F_{[c_j,d_j]}.$$
To prove the isomorphism between $F_{(a,b)}$ and $F_{(0,0)}$ it suffices to find a family of compatible isomorphisms $\psi_j\colon F_{[a_j,b_j]}\to F_{[c_j,d_j]}$, $j\in\mathbb{N}$. By \emph{compatible} we mean that for all $j>1$, the restriction of $\psi_j$ to $F_{[a_{j-1},b_{j-1}]}$ coincides with $\psi_{j-1}$.

We choose elements $g_j\in F$ inductively for $j\in\N$, such that
\begin{enumerate}
\item[(1)] for each $j$, $g_j(a_j)=c_j$ and $g_j(b_j)=d_j$; and
\item[(2)] for each $j>1$, the element $g_j$ coincides with the element $g_{j-1}$ on the interval $[a_{j-1},b_{j-1}]$.
\end{enumerate}
Such a choice is clearly possible by Remark \ref{r:choice}.

Notice that for each $j\in\N$, ${F_{[a_j,b_j]}}^{g_j}=F_{[c_j,d_j]}$.
Similarly, if $j>1$ and $h\in F_{[a_{j-1},b_{j-1}]}$, then $h^{g_j}=h^{g_{j-1}}$. Indeed, that follows from $h$ having support in $[a_{j-1},b_{j-1}]$ and condition (2) in the choice of $g_j$.

Thus, one can define a compatible family of isomorphisms  $\psi_j\colon F_{[a_j,b_j]}\to F_{[c_j,d_j]}$, by taking $\psi_j$ to be the isomorphism of conjugation by $g_j$. 
\end{proof}

\begin{Theorem}\label{thm:semi}
Let $U$ be a finite set of numbers in $(0,1)$. Assume that $U=U_1\cup U_2\cup U_3$ is the natural partition of $U$.
Let $m_i=|U_i|$, $i=1,2,3$, $r=|U|=m_1+m_2+m_3$.
Then $H_U$ is isomorphic to a semidirect product
$$H_U\cong [F,F]^{r+1}\rtimes \mathbb{Z}^{2m_1+m_2+2}.$$ Since $[F,F]$ is simple, the rank of the first integral homology group of $H_U$ is $2m_1+m_2+2$.
\end{Theorem}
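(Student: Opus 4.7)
My plan is to exhibit an explicit short exact sequence
\[
1 \longrightarrow K \longrightarrow H_U \xrightarrow{\ \phi\ } \Z^{2m_1+m_2+2} \longrightarrow 1
\]
with $K\cong [F,F]^{r+1}$, and then produce a splitting. Order the points of $U$ as $u_1<u_2<\cdots<u_r$ and set $u_0=0$, $u_{r+1}=1$. The map $\phi$ is the \emph{slope record}: to an element $f\in H_U$ I assign the $\log_2$ of the slope of $f$ at each constrained place, namely the right slope at $0$; the left slope at $1$; the left and right slopes at each $u_i\in U_1$; and, at each $u_i=.p_is_i^{\N}\in U_2$ (with $s_i$ a minimal period), the integer $a_i/|s_i|$ where $2^{a_i}$ is the (two-sided, by Lemma~\ref{rational}) slope of $f$ at $u_i$, an integer by Corollary~\ref{cor:rat}. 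Points of $U_3$ contribute nothing, since Corollary~\ref{cor:irr} forces $f$ to be the identity on a neighborhood of each such point. The total number of coordinates is thus $2+2m_1+m_2$, and multiplicativity of slopes makes $\phi$ a homomorphism.

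For the kernel, suppose $\phi(f)=0$, so $f$ has slope $1$ at every constrained place. Because $f$ is continuous, piecewise linear, and fixes every $u_i$ and the endpoints, being slope $1$ on both sides of a point of $U\cup\{0,1\}$ forces $f$ to be the identity on a neighborhood of that point (Corollary~\ref{cor:irr} handles $U_3$; Lemma~\ref{rational} with $m_1=m_2$ handles $U_2$; the boundary and $U_1$ cases are immediate). Hence the support of $f$ decomposes as a finite disjoint union of closed pieces, one strictly inside each open interval $(u_i,u_{i+1})$, so $f$ factors uniquely as a commuting product $f=f_0f_1\cdots f_r$ with $f_i\in F_{(u_i,u_{i+1})}$; conversely every such product lies in $\ker\phi$. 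Thus $\ker\phi=\prod_{i=0}^{r}F_{(u_i,u_{i+1})}\cong [F,F]^{r+1}$ by Lemma~\ref{der}.

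For surjectivity and the splitting simultaneously, I construct, using Lemma~\ref{choice} and Remark~\ref{r:choice}, elements $e_1,\dots,e_{2m_1+m_2+2}\in H_U$ whose $\phi$-images form the standard basis of $\Z^{2m_1+m_2+2}$ and which pairwise commute. At the boundary $0$ I take a copy of $x_0$ supported in a small dyadic interval $[0,\varepsilon]$, and symmetrically at $1$. At each $u_i\in U_1$ I take two elements, supported in small dyadic intervals $[u_i-\varepsilon_i,u_i]$ and $[u_i,u_i+\varepsilon_i]$ respectively, each a suitable copy of $x_0^{\pm 1}$ realizing slope $2$ only on the intended side. At each $u_i=.p_is_i^{\N}\in U_2$ I exhibit an element supported in a small dyadic interval $[a_i,b_i]$ around $u_i$ (chosen disjoint from the other $u_j$), built from a tree-diagram containing a pair of branches $p_is_i^{m+1}\to p_is_i^{m}$ for sufficiently large $m$ and completed by matching identity branches outside; such an element fixes $u_i$ with slope $2^{|s_i|}$ and has all other constrained slopes equal to $1$. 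The supports are chosen pairwise disjoint, except for two $U_1$-generators attached to a common dyadic point which act on opposite sides and therefore still commute. Consequently the $e_j$ generate a free abelian subgroup of $H_U$ on which $\phi$ is an isomorphism onto $\Z^{2m_1+m_2+2}$, yielding a section. Combining with the kernel computation gives $H_U\cong [F,F]^{r+1}\rtimes \Z^{2m_1+m_2+2}$.

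The claim about first homology is then immediate: $[F,F]$ is simple and non-abelian, hence perfect, so $[F,F]^{r+1}$ is perfect; abelianizing the semidirect product kills the kernel and leaves $\Z^{2m_1+m_2+2}$, of rank $2m_1+m_2+2$. The most delicate step in the whole argument is the construction of the $U_2$-generators: one must realize the pair of branches $p_is_i^{m+1}\to p_is_i^{m}$ inside a tree-diagram representing an element supported in a chosen enclosing dyadic interval, which requires completing both trees to the same number of leaves with matching identity branches outside, so that no other constrained slope is disturbed.
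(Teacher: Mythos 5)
Your proposal is correct and follows essentially the same route as the paper: the paper's subgroup $G$ generated by commuting elements with disjoint supports realizing slope $2$ (or $2^{|s_\beta|}$) at each constrained place is exactly your section, its slope-killing argument showing $H_U=SG$ with $S\cap G=1$ is your slope-record homomorphism $\phi$ with kernel $S$, and both identify the kernel with $\prod_i F_{(u_i,u_{i+1})}\cong [F,F]^{r+1}$ via Lemma \ref{der}. The only cosmetic difference is that you package the argument as an explicit split short exact sequence, normalizing the $U_2$ coordinates by $|s_i|$ using Corollary \ref{cor:rat}, rather than verifying the semidirect-product conditions directly.
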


\begin{proof}
For each $\alpha\in U_1$ we choose closed intervals $L_\alpha$ and $R_\alpha$ of positive length with endpoints in $\zz$ such that $\alpha$ is the right endpoint of $L_\alpha$ and left endpoint of $R_\alpha$. We can choose the intervals $L_\alpha$ and $R_\alpha$ to be small enough so that they do not contain points from $U_2\cup U_3$ and the interiors of all these intervals are pairwise disjoint.

For $\alpha\in U_1$, we choose elements $g_\alpha$ and $f_\alpha$ such that $g_\alpha$ has support in $L_\alpha$ and slope $2$ at $\alpha^-$ and
$f_\alpha$ has support in $R_\alpha$ and slope $2$ at $\alpha^+$.

For each $\beta\in U_2$
we have $\beta=.p_\beta s_\beta^\N$ for some finite binary words $p_\beta$ and $s_\beta$, where $s_\beta$ is a minimal period of $\beta$. Let $C_\beta$, for $\beta\in U_2$, be pairwise disjoint open intervals with endpoints in $\zz$, such that for all $\beta\in U_2$, $\beta\in C_\beta$.
Assume also that all $C_\beta$ are disjoint from the union of all $L_\alpha$ and $R_\alpha$, $\alpha\in U_1$.
For each $\beta\in U_2$ we choose an element $h_\beta$ such that $h_\beta$ has support in $C_\beta$ and has a pair of branches $p_\beta s_\beta^{k}\rightarrow p_\beta s_\beta^{k-1}$ for some $k\in\mathbb{N}$ (the number $k$ can be chosen independently from $\beta$). In particular, $h_\beta$ fixes $\beta$ and has slope $2^{|s_\beta|}$ at $\beta$.

Finally, we choose two additional elements, corresponding to the fixed points $0$ and $1$.
Let $N_0, N_1$ be closed intervals with disjoint interiors, containing 0 and 1 respectively and having endpoints in $\zz$. We assume that $N_0$ and $N_1$ are disjoint from all the intervals $L_\alpha, R_\alpha, C_\beta$ chosen above and do not contain any numbers from $U_3$.
We choose elements $f$ and $g$ such that  $f$ has support in $N_0$ and slope $2$ at $0^+$ and
 $g$ has support in $N_1$ and slope $2$ at $1^-$.

Note that the elements $g_\alpha, f_\alpha (\alpha\in U_1),g_\beta (\beta\in U_2), f, g$
belong to $H_U$. We let $G$ be the subgroup of $H_U$ generated by these elements. Since the interiors of supports of these elements are pairwise disjoint, they pairwise commute. Thus, $G$ is isomorphic to $\mathbb{Z}^{2n+m+2}$.

Let $\gamma_1,...,\gamma_r$ be the elements of $U$ in increasing order. Let $\gamma_0=0, \gamma_{r+1}=1$. Denote by $S$ the group of all elements of $F$ that fix open neighborhoods of each $\gamma_i, i=0,...,r+1$. Clearly $S\le H_U$.

We claim that $H_U$ is generated by $S$ and $G$.
Indeed, let $h\in H_U$. By Corollary \ref{cor:irr}, $h$ fixes an open neighborhood of each irrational number in $U$. We claim that one can multiply $h$ from the right by a suitable element $y\in G$ so that the slope of $hy$ at every point $\gamma_i, i=0,...,r+1$ would be $1$ (then obviously $hy\in S$ and so $h\in\la S\cup G\ra$).

Assume that the slope of $h$ at $0^+$ is $2^{\ell}$ for some $\ell$. Then $hf^{-\ell}$ has slope $1$ at $0^+$. Multiplying $h$ by $f^{-\ell}$ does not affect the slope at any point  $\gamma_j, j>0$. Thus, we can replace $h$ by $hf^{-\ell}$. Proceeding in this manner, one can make the slope at each point $\gamma_j$ be $1$ by multiplying from the right by elements of $G$ (we use $f_\alpha, g_\alpha$ for $\alpha\in U_1$, $g_\beta$ for $\beta\in U_2$ and $g$ for $j=r+1$).

To finish the proof we observe that $S$ is a normal subgroup of $H_U$ and so $H_U=SG$.
We claim that $S\cap G$ is trivial.  Indeed, the slopes of $h$ at (both sides) of the numbers from $U$ determine uniquely the element $y\in G$. Thus the only element of $G$ that fixes an open neighborhood around each $\gamma\in U$ is the identity.
Thus, $H_U=S\rtimes G$. It remains to note that the group $S$ is isomorphic to the direct product $$F_{(\gamma_0,\gamma_1)}\times F_{(\gamma_1,\gamma_2)}\times \ldots\times F_{(\gamma_r,\gamma_{r+1})}$$ and by Lemma \ref{der} is isomorphic to $[F,F]^{r+1}.$
\end{proof}

\begin{Corollary} \label{c:1} If $U$ and $V$ are finite sets of numbers from $(0,1)$ and $|U|\ne |V|$, then $H_U$ and $H_V$ are not isomorphic.
\end{Corollary}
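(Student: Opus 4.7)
The plan is to produce from the group $H_U$ a canonical group-theoretic invariant that determines $|U|$. By Theorem \ref{thm:semi} we have $H_U \cong N_U \rtimes A_U$, where $N_U \cong [F,F]^{|U|+1}$ and $A_U \cong \mathbb{Z}^{2m_1+m_2+2}$. The rank $2m_1+m_2+2$ of the abelianization alone is not enough: different cardinalities $|U|$ (with different natural partitions) can give the same rank. Instead I would work with the derived subgroup $[H_U,H_U]$, which is of course preserved under any isomorphism, and recover $|U|+1$ from its internal structure.

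The first step is to identify $[H_U,H_U]$ with $N_U$. Since $A_U$ is abelian, every commutator in $H_U$ lies in $N_U$, so $[H_U,H_U] \subseteq N_U$. In the other direction, $N_U$ is a direct product of copies of the perfect group $[F,F]$ and is therefore itself perfect, so $N_U = [N_U,N_U] \subseteq [H_U,H_U]$. Hence $[H_U,H_U] \cong [F,F]^{|U|+1}$ as an abstract group.

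The second and main step is to show that, for every $k \ge 1$, the group $[F,F]^k$ has exactly $k$ minimal normal subgroups, namely the $k$ coordinate factors. If $M$ is a nontrivial normal subgroup and $I = \{i : p_i(M) \ne 1\}$, where $p_i$ is the $i$-th projection, then $p_i(M) = [F,F]$ for each $i \in I$ by simplicity of $[F,F]$. For any such $i$ and any element of $M$ with nontrivial $i$-th coordinate $a$, conjugating by the coordinate copy $S_i \cong [F,F]$ and taking commutators shows that $\{[s,a] : s \in S_i\} \subseteq M$; since $[F,F]$ is simple and centerless, this set generates $S_i$, so $S_i \subseteq M$. Consequently $M = \prod_{i \in I} S_i$, and the minimal nontrivial normal subgroups are exactly the $k$ coordinate factors. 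Applying this to $[H_U,H_U]$, the number $|U|+1$ becomes an isomorphism invariant of $H_U$, so $|U| \ne |V|$ forces $H_U \not\cong H_V$. The only delicate point is the classification of normal subgroups of $[F,F]^k$, but once simplicity and centerlessness of $[F,F]$ are in hand this is standard for finite direct products of nonabelian simple groups.
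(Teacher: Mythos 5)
Your proposal is correct and follows essentially the same route as the paper: both pass to the derived subgroup, identify it with $[F,F]^{|U|+1}$ via Theorem \ref{thm:semi}, and recover $|U|+1$ from the lattice of normal subgroups of that direct power of a nonabelian simple group (the paper counts all $2^{|U|+1}$ normal subgroups, you count the $|U|+1$ minimal ones — the same underlying classification). Your write-up merely makes explicit the perfectness argument and the normal-subgroup classification that the paper leaves implicit.
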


\proof Indeed, by Theorem \ref{thm:semi} the derived subgroup of $H_U$ is isomorphic to the direct product of $|U|+1$ copies of the simple group $[F,F]$. Thus it has $2^{|U|+1}$ normal subgroups. So it cannot be isomorphic to a direct power of a different number of simple groups.
\endproof

The following is an immediate  corollary of the proof of Theorem \ref{thm:semi} (see \cite[Example A12.13]{Bieri} ).

\begin{Corollary} The R. Thompson group $F$ is a semidirect product of the derived subgroup $[F,F]$ and the Abelian subgroup generated by $x_0\oplus \1$ and $x_1=\1\oplus x_0$.
\end{Corollary}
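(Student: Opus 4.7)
The plan is to specialize the proof of Theorem \ref{thm:semi} to the trivial case $U=\emptyset$, where $r=0$ and $m_1=m_2=m_3=0$. That proof then exhibits $F=H_\emptyset$ as a semidirect product $[F,F]\rtimes\Z^2$, where the $\Z^2$ factor is generated by an element $f$ with support in a small $\zz$-neighborhood of $0$ and nontrivial slope at $0^+$, and an element $g$ with support in a small $\zz$-neighborhood of $1$ and nontrivial slope at $1^-$. The only thing to check is that the specific pair $f:=x_0\oplus\1$ and $g:=x_1=\1\oplus x_0$ meets these requirements, so that the generic existence proof can be replaced by this explicit choice.

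First I would verify the slope/support data for the two specified elements by unwinding the definition of $\oplus$ from Section~\ref{sec:copies}. Since $x_0\oplus\1$ is the copy of $x_0$ in $F_{[0]}=F_{[0,1/2]}$, its support is contained in $[0,1/2]$, its slope at $0^+$ equals that of $x_0$ at $0^+$, namely $2$, and its slope at $1^-$ equals $1$. Dually, $x_1=\1\oplus x_0$ has support in $[1/2,1]$, slope $1$ at $0^+$, and slope $2^{-1}$ at $1^-$ (read off from the piecewise formula for $x_0$ at $1^-$). In particular, the interiors of the supports are disjoint, so the two elements commute, and the subgroup $A:=\langle x_0\oplus\1,x_1\rangle\le F$ is abelian.

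Next, consider the map $\sigma\colon A\to\Z^2$ sending an element to the pair of $\log_2$-slopes at $0^+$ and at $1^-$. On generators it sends $x_0\oplus\1\mapsto(1,0)$ and $x_1\mapsto(0,-1)$, so $\sigma$ is a surjective homomorphism, and $\ker\sigma=A\cap[F,F]$ because $[F,F]$ is precisely the subgroup of elements with slope $1$ at each endpoint. Hence $A\cong\Z^2$ and $A\cap[F,F]=\{1\}$. To see that $F=[F,F]\cdot A$, take any $h\in F$ with slope $2^p$ at $0^+$ and $2^q$ at $1^-$; then $h\cdot(x_0\oplus\1)^{-p}\cdot x_1^{q}$ has slope $1$ at both endpoints, so lies in $[F,F]$. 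Combining this with normality of $[F,F]$ in $F$ yields $F=[F,F]\rtimes A$, as claimed.

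There is no real obstacle here beyond the bookkeeping of slopes at $0^+$ and $1^-$; the argument is essentially the content of the last paragraph of the proof of Theorem~\ref{thm:semi} applied to an empty set of marked points, with the generic choices of $f,g$ replaced by $x_0\oplus\1$ and $x_1$.
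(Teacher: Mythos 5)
Your proposal is correct and follows exactly the route the paper intends: the paper states the corollary as an immediate consequence of the proof of Theorem \ref{thm:semi} specialized to $U=\emptyset$, and your verification that $x_0\oplus\1$ and $x_1=\1\oplus x_0$ have the required supports and slopes at $0^+$ and $1^-$ (so that they can play the roles of the generic elements $f,g$ there) is precisely the missing bookkeeping. The slope computations and the argument via the $\log_2$-slope homomorphism onto $\Z^2$ are all accurate.
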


\section{Isomorphism between stabilizers of finite sets}\label{sec:iso}

\subsection{Isomorphic stabilizers of finite sets}

Let $U=\{\gamma_1,\dots,\gamma_n\}$ be a set of numbers from $[0,1]$ (here and below we assume that $\gamma_1,\dots,\gamma_n$ are listed in increasing order). Let $U=U_1\cup U_2\cup U_3$ be the natural partition of $U$. Then we can define the {\em type} $\tau(U)$, to be a word  in the alphabet $\{1,2,3\}$ by taking the word $\gamma_1\gamma_2...\gamma_n$ and replacing each $\gamma_j\in U_i$ by the letter $i$. Note that by Corollary \ref{c:1} if $U,V\subseteq (0,1)$ and $|\tau(U)|\ne |\tau(V)|$, then $H_U$ and $H_V$ are not isomorphic.

\begin{Theorem} \label{th:m1} If $U$ and $V$ are two finite sets of numbers from $(0,1)$ and $\tau(U)\equiv \tau(V)$, then the subgroups $H_U$ and $H_V$ are isomorphic.
\end{Theorem}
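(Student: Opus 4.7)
The plan is a two-step reduction. First I would decompose $H_U$ as a direct product of ``blocks'' using the points of $U_1 \cup U_3$ as split points, and then argue that each block depends only on the endpoint types and on the number of $U_2$-points it contains. Since the matching condition $\tau(U) \equiv \tau(V)$ guarantees that the two decompositions have matching endpoint types and matching $U_2$-counts blockwise, isomorphisms of corresponding blocks will combine to an isomorphism $H_U \to H_V$.

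For the first step, at each $\gamma \in U_1 \cup U_3$ I would show that $H_U = H_U^{\le\gamma} \times H_U^{\ge\gamma}$, where the factors consist of the elements of $H_U$ with support in $[0,\gamma]$ and $[\gamma,1]$ respectively. Every $h \in H_U$ fixes $\gamma$; if $\gamma \in U_1$ then the restrictions of $h$ to $[0,\gamma]$ and $[\gamma,1]$, extended by the identity, lie in $F$ because a break is permitted at the dyadic point $\gamma$, while if $\gamma \in U_3$ then Corollary \ref{cor:irr} gives that $h$ is already the identity on a neighborhood of $\gamma$, so no break is needed. Conjugation by elements of $H_U$ preserves supports, so the two factors are normal with trivial intersection. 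Iterating at every split point gives
$$H_U = B_1(U) \times \cdots \times B_k(U),$$
and analogously for $V$, with the numbering matched by $\tau(U)\equiv\tau(V)$.

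Each block $B_j(U)$ corresponds to a maximal sub-interval $[\eta_{j-1},\eta_j]$ whose interior meets $U$ only in $U_2$, and consists of the elements of $F$ with support in $[\eta_{j-1},\eta_j]$ that fix $W := U_2\cap(\eta_{j-1},\eta_j)$ pointwise (and, by the argument above, are automatically the identity near any irrational endpoint). When both endpoints lie in $\{0,1\}\cup U_1$, the interval carries a standard copy of $F$ in the sense of Section \ref{sec:copies}, and the block identifies with the stabilizer of a set of $|W|$ non-dyadic rationals in $F$; when an endpoint is irrational, the ascending-union argument of Lemma \ref{der} expresses the block as a directed union of such stabilizers over shrinking dyadic sub-intervals. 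Up to isomorphism, each block is therefore determined by $|W|$ together with the pair of boundary flags (dyadic/boundary versus irrational), and the theorem reduces to the sublemma that for finite sets $W,W'\subset(0,1)$ of non-dyadic rationals with $|W|=|W'|$ one has $H_W\cong H_{W'}$, together with the analogous irrational-endpoint variants.

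For the sublemma, Theorem \ref{thm:semi} provides abstract isomorphisms $H_W\cong [F,F]^{|W|+1}\rtimes\Z^{|W|+2}$ (and similarly for $W'$) with matching normal and quotient factors; what remains is to show that the two conjugation actions are equivalent. The main obstacle is precisely this step. I would exploit the flexibility in the choice of the bridge generators $h_\beta$: replacing $h_\beta$ by $h_\beta c_\beta$ with $c_\beta$ in an adjacent $[F,F]$-factor alters the action only by inner automorphisms of that factor and so preserves the semidirect-product isomorphism type. Using this freedom together with Lemma \ref{choice} and Remark \ref{r:choice}, I would arrange each $h_\beta$ to act non-trivially only on a small dyadic sub-interval immediately adjacent to $\beta$ on each side, so that its conjugation action becomes left multiplication inside a standard copy of $F$ sitting on that sub-interval. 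The isomorphisms between corresponding $[F,F]$-factors on the $U$ and $V$ sides, furnished by Lemma \ref{der}, can then be chosen so as to intertwine these standard actions, giving the required isomorphism of semidirect products.
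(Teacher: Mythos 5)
Your reduction to blocks is sound and matches what the paper does in its Section on the algebraic structure of the stabilizers: splitting at the points of $U_1\cup U_3$ (using the fact that a break is allowed at a dyadic point and that Corollary \ref{cor:irr} makes elements trivial near an irrational point) does give $H_U$ as a direct product of blocks, and the theorem does reduce to matching blocks with equal $U_2$-counts and matching endpoint types. You also correctly identify where the difficulty lies: making the actions of the bridge generators $h_\beta$ and $h_{\beta'}$ correspond under an isomorphism of the base factors. But the mechanism you propose for this step does not work. The factors of the base in Theorem \ref{thm:semi} are the groups $F_{(\gamma_{i-1},\beta)}\cong[F,F]$, whose elements are supported in compact subintervals of $(\gamma_{i-1},\beta)$ and hence are the identity near $\beta$. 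Consequently, replacing $h_\beta$ by $h_\beta c_\beta$ with $c_\beta$ in an adjacent $[F,F]$-factor cannot change the germ of $h_\beta$ at $\beta$; and that germ is forced by Lemma \ref{rational} and Corollary \ref{cor:rat} to have slope a nonzero power of $2^{|s_\beta|}$ at $\beta$. For $\beta=.p s^{\N}$ and $\beta'=.p' (s')^{\N}$ with $|s|\neq|s'|$ (e.g.\ $\frac13$ versus $\frac17$) the two conjugation automorphisms therefore differ by an invariant that no amount of adjustment by elements of the factors can remove, so the action cannot be normalized to a single ``standard'' form independent of $\beta$ (and ``left multiplication'' is not a meaningful description of a conjugation action here in any case). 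The entire burden of the proof is to build the isomorphism $F_{(\gamma_{i-1},\beta)}\to F_{(\gamma'_{i-1},\beta')}$ so that it absorbs this discrepancy, and your sketch asserts rather than constructs it.

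For comparison, the paper resolves exactly this point by a different choice of base: it excises a whole dyadic neighborhood $[p_\beta]$ of each $\beta\in U_2$ and takes the base $K_U$ to be a product of copies of the full group $F$ supported on the remaining closed dyadic intervals. The generators $g_\beta$ and $f_\delta$ are then chosen (conditions (1)--(4) in the proof, via Lemma \ref{choice}) to \emph{coincide literally} on the parts of their supports meeting the base intervals --- each maps the relevant half-interval linearly onto its own half --- so that all dependence on the period $s_\beta$ is confined to the excised interval $[p_\beta]$. The price is that conjugation by $g_\beta$ maps $K_U$ properly into itself, so one obtains an iterated \emph{ascending HNN-extension} (Lemma \ref{l:hnn}) rather than a semidirect product, and the intertwining with the base isomorphism $\phi$ of Lemma \ref{phi} (built from the compatible direct-limit isomorphisms of Lemmas \ref{0} and \ref{1} to handle irrational endpoints) becomes checkable generator by generator. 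If you prefer to keep the semidirect-product picture, the intertwining isomorphism of the $[F,F]$-factors you need is essentially conjugation by the infinite tree-diagram constructed in Lemma \ref{111}, i.e.\ you would be reproving the stronger conjugacy statement of Theorem \ref{t:pl}; either way, that construction is the missing content.
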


To prove this theorem, we will realize $H_U$ and $H_V$ as iterated ascending HNN-extensions. Assuming $\tau(U)\equiv \tau(V)$, we will prove that the base groups of the HNN-extensions are isomorphic and the actions of the stable letters commute with the isomorphism between the relevant base groups. That will imply the result.

We will need the following three lemmas.

\begin{Lemma}\label{xy}
Let $a, b\in [0,1]$ be such that $a<b$. Let $x,y\in (a,b)\cap \zz$  be  such that $x<y$. Then $F_{[a,b]}$ is generated by $F_{[a,y]}$ and $F_{[x,b]}$.

\end{Lemma}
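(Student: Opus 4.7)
The plan is to show that every $f \in F_{[a,b]}$ can be written as a product of one element of $F_{[a,y]}$ and one element of $F_{[x,b]}$, which suffices. The argument is direct: using Remark \ref{r:choice}, I will build an explicit element of $F$ with prescribed restrictions on three consecutive subintervals. The construction splits according to whether $f(x) < y$ or $f(x) \geq y$ (note that $f(x), f^{-1}(y) \in \zz$ because $x, y \in \zz$ and $f \in F$).

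First suppose $f(x) < y$, so $f^{-1}(y) > x$ and the interval $[x, f^{-1}(y)]$ has positive length with dyadic endpoints. By Lemma \ref{choice} there is an element of $F$ carrying $[x, f^{-1}(y)]$ onto $[x,y]$. Invoking Remark \ref{r:choice} on the three consecutive intervals $[0,x]$, $[x,f^{-1}(y)]$, $[f^{-1}(y),1]$ with images $[0,x]$, $[x,y]$, $[y,1]$, I can produce $h \in F$ which is the identity on $[0,x]$, agrees with the chosen element on $[x, f^{-1}(y)]$, and agrees with $f$ on $[f^{-1}(y), 1]$. Since $f$ is the identity on $[b,1] \subseteq [f^{-1}(y),1]$ (because $y \leq b$), so is $h$; hence $h \in F_{[x,b]}$. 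Set $g := fh^{-1}$. It lies in $F$, it fixes $[0,a]$ (both $f$ and $h$ do, as $[0,a] \subseteq [0,x]$), and on $[y,1]$ we have $h^{-1} = f^{-1}$, so $g$ restricts to the identity there. Therefore $g \in F_{[a,y]}$ and $f = gh$.

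The case $f(x) \geq y$ is handled symmetrically by writing $f = hg$. Build $g \in F_{[a,y]}$ that agrees with $f$ on $[0, f^{-1}(x)]$, is the identity on $[y,1]$, and on $[f^{-1}(x), y]$ restricts to an element of $F$ mapping it onto $[x,y]$ (again via Lemma \ref{choice} and Remark \ref{r:choice}). The assumption $f(x) \geq y$ gives $f(y) \geq f(x) \geq y > x$, so $f^{-1}(x) < y$ and the middle interval has positive length. Then $h := fg^{-1}$ fixes $[0,x]$ (on $[0,x]$ one has $g^{-1} = f^{-1}$) and fixes $[b,1]$ (both $f$ and $g$ do), so $h \in F_{[x,b]}$ and $f = hg$.

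The main subtlety, and the reason for the case distinction, is the borderline situation $f(x) = y$: there $f^{-1}(y) = x$ and the middle interval in Case 1 collapses to a point, making the decomposition $f = gh$ impossible. Case 2 still works in that situation because $f^{-1}(x) < y$ is guaranteed as soon as $f(x) \geq y > x$. This is really the only nontrivial point; everything else is bookkeeping with Lemma \ref{choice}.
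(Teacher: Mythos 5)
Your proof is correct and follows essentially the same route as the paper's: a case split on whether $f(x)<y$, and in each case a direct factorization of $f$ into one element supported in $[a,y]$ and one supported in $[x,b]$, obtained by building an auxiliary element of $F$ that agrees with $f$ on a suitable subinterval (the paper's second case instead arranges for $y$ to become a fixed point, but this is a cosmetic difference). The only caveat is that your formulas such as $g:=fh^{-1}$ and $f=gh$ are written in the standard right-to-left composition convention, whereas the paper composes left to right; under the paper's convention the factors should be written in the opposite order, but since the conclusion is only that $f$ lies in the subgroup generated by $F_{[a,y]}$ and $F_{[x,b]}$, this does not affect the validity of the argument.
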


\begin{proof}
Let $f\in F_{[a,b]}$. If $f(x)< y$, then $f([a,x])\subset [a,y)$ and there exists a function $h\in F_{[a,y]}$ such that $h$ coincides with $f$ on the interval $[a,x]$. Then the function $fh^{-1}$ fixes the interval $[a,x]$. In particular, $fh^{-1}\in F_{[x,b]}$ and so $f\in F_{[x,b]}F_{[a,y]}$.
If $f(x)\ge y$, then $f(y)>y$. There is a function $g\in F_{[x,b]}$ such that $g(f(y))=y$. Then $y$ is a fixed point for $fg$, so $fg\in F_{[a,y]}F_{[y,b]} \subseteq F_{[a,y]}F_{[x,b]}$.
\end{proof}

\begin{Lemma}\label{0}
Let $a,b,c\in (0,1)$ be such that $a<b$ and $a<c$. Let $y\in (a,b)\cap(a,c)\cap \zz$. Then there exists an isomorphism $\psi\colon F_{[a,b)}\to F_{[a,c)}$ such that
\begin{enumerate}
\item[(1)] $\psi$ is the identity map on $F_{[a,y]}$, and
\item[(2)] For any $x\in (a,y)$, $\psi(F_{[x,b)})= F_{[x,c)}$.
\end{enumerate}
\end{Lemma}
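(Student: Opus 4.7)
The plan is to realize both $F_{[a,b)}$ and $F_{[a,c)}$ as ascending unions of subgroups $F_{[a,b_j]}$ and $F_{[a,c_j]}$, and to exhibit $\psi$ as the direct limit of a compatible sequence of conjugation isomorphisms. First I would choose strictly increasing sequences $(b_j)_{j\in\N}\subset (y,b)\cap\zz$ with $b_j\to b$, and $(c_j)_{j\in\N}\subset (y,c)\cap\zz$ with $c_j\to c$. Because any element of $F$ has only finitely many break points, all lying in $\zz$, the support of any $f\in F_{[a,b)}$ is compact and bounded away from $b$, hence contained in $[a,b_j]$ for $j$ large. Thus $F_{[a,b)}=\bigcup_{j\in\N}F_{[a,b_j]}$ and $F_{[a,c)}=\bigcup_{j\in\N}F_{[a,c_j]}$.

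Next I would inductively construct elements $g_j\in F$ satisfying (a) $g_j|_{[0,y]}=\mathrm{id}$ and $g_j(1)=1$; (b) $g_j(b_j)=c_j$; and (c) $g_{j+1}$ agrees with $g_j$ on $[0,b_j]$. For both the base case and the inductive step I would apply Lemma \ref{choice} together with Remark \ref{r:choice}: once $g_j$ is fixed, it suffices to extend it from $[0,b_j]$ to all of $[0,1]$ by mapping the dyadic intervals $[b_j,b_{j+1}]$ and $[b_{j+1},1]$ to $[c_j,c_{j+1}]$ and $[c_{j+1},1]$ respectively, which is possible since all endpoints are in $\zz$. Define $\psi_j\colon F_{[a,b_j]}\to F_{[a,c_j]}$ by $\psi_j(f)=f^{g_j}$. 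Since $g_j$ is an increasing homeomorphism of $[0,1]$ fixing $[0,y]$ pointwise and sending $b_j$ to $c_j$, it carries $[a,b_j]$ bijectively onto $[a,c_j]$, so each $\psi_j$ is an isomorphism. Property (c), together with the fact that any element of $F_{[a,b_j]}$ moves only points of $[a,b_j]$, forces $\psi_{j+1}$ to restrict to $\psi_j$ on $F_{[a,b_j]}$. The $\psi_j$'s therefore glue to an isomorphism $\psi\colon F_{[a,b)}\to F_{[a,c)}$.

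It remains to verify (1) and (2). For (1), any $f\in F_{[a,y]}$ is supported in $[a,y]\subseteq[0,y]$, where every $g_j$ is the identity, so $\psi(f)=f^{g_j}=f$. For (2), given $x\in(a,y)$, property (a) yields $g_j(x)=x$, so $g_j$ maps $[x,b_j]$ onto $[x,c_j]$; hence $\psi_j(F_{[x,b_j]})=F_{[x,c_j]}$, and passing to the directed union over $j$ yields $\psi(F_{[x,b)})=F_{[x,c)}$. The only step requiring real care is the simultaneous compatible inductive construction of the $g_j$'s satisfying (a)--(c) at once; everything else is automatic direct-limit bookkeeping.
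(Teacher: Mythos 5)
Your proof is correct and takes essentially the same approach as the paper's: both realize $F_{[a,b)}$ and $F_{[a,c)}$ as ascending unions $\bigcup_j F_{[a,b_j]}$, $\bigcup_j F_{[a,c_j]}$ over dyadic cut points and assemble $\psi$ from a compatible sequence of conjugation isomorphisms $f\mapsto f^{g_j}$, with the $g_j$ fixing $[a,y]$ pointwise so that (1) and (2) follow. The only cosmetic difference is that the paper starts its sequence at $b_1=c_1=y$ instead of taking all $b_j,c_j$ strictly above $y$.
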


\begin{proof}
We adapt the proof of Lemma \ref{der}. Let $\{b_j\}_{j\in\mathbb{N}}$ be an increasing sequence of numbers in $[a,b)\cap \zz$ which converges to $b$ and such that $b_1=y$. Let $\{c_j\}_{j\in\mathbb{N}}$ be an increasing sequence of numbers in $[a,c)\cap \zz$ which converges to $c$, and assume that $c_1=y$. To define an isomorphism
$$\psi\colon F_{[a,b)}=\bigcup F_{[a,b_j]}\to F_{[a,c)}=\bigcup F_{[a,c_j]}$$
we choose a sequence of elements $g_j$ in a similar way to that in Lemma \ref{der}. We let $g_1$ be an element which fixes the interval $[a,y]$. In particular, $g_1(b_1)=c_1$.
For each $j>1$, we let $g_j$ be an element such that $g_j(b_j)=c_j$ and such that $g_j|_{[a,b_{j-1}]}=g_{j-1}|_{[a,b_{j-1}]}$. The choice of elements $g_j$ defines compatible isomorphisms $\psi_j\colon F_{[a,b_j]}\to F_{[a,c_j]}$, where for each $j$, $\psi_j$ is the isomorphism of conjugation by $g_j$. The family of isomorphisms $\psi_j$ gives the required isomorphism $\psi$.

It suffices to prove that $\psi$ satisfies conditions (1) and (2).
If $h\in F_{[a,y]}=F_{[a,b_1]}$, then $\psi(h)=\psi_1(h)=h^{g_1}=h$, where the last equality follows from $g_1$ fixing the support of $h$. Thus, condition (1) holds. Let $x\in (a,y)$ and let $f\in F_{[x,b)}$. Then $f\in F_{[x,b_j]}$ for some $j\in\N$ and so $\psi(f)=f^{g_j}$. Notice that $\Supp(\psi(f))=g_j(\Supp(f))\subseteq g_j([x,b_j])=[x,c_j]\subseteq [x,c)$. Thus, $\psi(F_{[x,b)})\subseteq F_{[x,c)}$. Considering  $\psi^{-1}$ instead of $\psi$ gives the inverse inclusion.
\end{proof}

The proof of the following lemma is similar to the proof of Lemma \ref{0}.

\begin{Lemma}\label{1}
Let $a,b,c\in (0,1)$ be numbers such that $a<b$ and $c<b$. Let $y\in(a,b)\cap(c,b)\cap \zz$. Then there exists an isomorphism $\psi\colon F_{(a,b]}\to F_{(c,b]}$ such that
\begin{enumerate}
\item[(1)] $\psi$ is the identity map on $F_{[y,b]}$, and
\item[(2)] For any $x\in (y,b)$, $\psi(F_{(a,x]})= F_{(c,x]}$.
\end{enumerate}\qed
\end{Lemma}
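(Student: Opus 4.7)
The plan is to mimic the proof of Lemma \ref{0} with the roles of the endpoints swapped: here the right endpoint $b$ is common to both intervals $(a,b]$ and $(c,b]$, while the left endpoints $a$ and $c$ are the ones being approached by dyadic sequences. Concretely, I choose a strictly decreasing sequence $\{a_j\}_{j\in\N}$ in $(a,b]\cap \zz$ with $a_1=y$ and $a_j\to a$, and likewise a strictly decreasing sequence $\{c_j\}_{j\in\N}$ in $(c,b]\cap \zz$ with $c_1=y$ and $c_j\to c$. This gives
$$F_{(a,b]}=\bigcup_{j\in\N}F_{[a_j,b]}\qquad\text{and}\qquad F_{(c,b]}=\bigcup_{j\in\N}F_{[c_j,b]},$$
and it suffices to produce a compatible family of isomorphisms $\psi_j\colon F_{[a_j,b]}\to F_{[c_j,b]}$.

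These isomorphisms will be conjugations by carefully chosen elements $g_j\in F$. Using Remark \ref{r:choice}, first pick $g_1\in F$ which is the identity on $[y,b]$ (so in particular it fixes $b$ and sends $a_1=y$ to $c_1=y$), and then inductively for $j>1$ pick $g_j\in F$ with $g_j|_{[a_{j-1},b]}=g_{j-1}|_{[a_{j-1},b]}$ and $g_j(a_j)=c_j$. Since the restriction is preserved at each step, $g_j$ fixes $[y,b]$ pointwise for every $j$; together with $g_j(a_j)=c_j$ and $g_j$ being orientation preserving, this gives $g_j([a_j,b])=[c_j,b]$, so $\psi_j(h):=h^{g_j}$ does map $F_{[a_j,b]}$ onto $F_{[c_j,b]}$. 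Compatibility of the $\psi_j$'s follows exactly as in Lemma \ref{0}: for $h\in F_{[a_{j-1},b]}$, the support of $h$ lies where $g_j$ and $g_{j-1}$ agree, hence $h^{g_j}=h^{g_{j-1}}$. Taking the direct limit defines $\psi\colon F_{(a,b]}\to F_{(c,b]}$.

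Finally, I verify the two properties. For (1), any $h\in F_{[y,b]}=F_{[a_1,b]}$ has support in $[y,b]$, and $g_1$ fixes $[y,b]$ pointwise, so $\psi(h)=\psi_1(h)=h^{g_1}=h$. For (2), given $x\in(y,b)$ and $f\in F_{(a,x]}$, one has $f\in F_{[a_j,x]}$ for some $j$; since $g_j$ fixes $[y,b]\ni x$ pointwise, $\Supp(\psi(f))=g_j(\Supp(f))\subseteq g_j([a_j,x])=[c_j,x]$, giving $\psi(F_{(a,x]})\subseteq F_{(c,x]}$, and the reverse inclusion follows by applying the symmetric construction to $\psi^{-1}$.

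The only mildly delicate point, which is also the same point in Lemma \ref{0}, is verifying that one can choose the sequence $\{g_j\}$ so that each $g_j$ simultaneously (i) fixes $[y,b]$ pointwise, (ii) sends $a_j$ to $c_j$, and (iii) agrees with $g_{j-1}$ on $[a_{j-1},b]$; this is exactly the situation handled by Remark \ref{r:choice}, so there is no genuine obstacle beyond bookkeeping.
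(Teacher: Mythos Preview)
Your proof is correct and follows exactly the approach indicated by the paper, which simply states that the argument is the mirror image of Lemma~\ref{0}; you have carried out that mirroring accurately, swapping the roles of the endpoints and using decreasing dyadic sequences approaching $a$ and $c$ in place of the increasing sequences approaching $b$ and $c$ there. The one cosmetic point is that since $b$ need not be dyadic, when invoking Remark~\ref{r:choice} it is cleanest to require $g_j|_{[a_{j-1},1]}=g_{j-1}|_{[a_{j-1},1]}$ (or equivalently to take $g_1$ to be the identity on $[y,1]$), which a fortiori gives the agreement on $[a_{j-1},b]$ that you use; this is the same harmless shortcut the paper takes in Lemma~\ref{0} with the non-dyadic endpoint $a$.
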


\begin{proof}[Proof of Theorem \ref{th:m1}]
We start by replacing $H_V$ by a subgroup $H_W$ which is closer to $H_U$ in some sense.
Let $U=U_1\cup U_2\cup U_3$ be the natural partition.
For each $\beta\in U_2$  let $\beta=.p_\beta s_\beta^\N$ where $s_\beta$ is a minimal period. By replacing the prefixes $p_\beta$ by longer prefixes $p_\beta s_\beta^k$ if necessary, we can assume that the intervals $[p_\beta]$, $\beta\in U_2$ are pairwise disjoint and that each of these intervals contains exactly one element of $U\cup \{0,1\}$, the  number $\beta$.

Similarly, let $V=V_1\cup V_2\cup V_3$ be the natural partition of $V$.
For each $\gamma\in V_2$, let $\gamma=.q_{\gamma}u_{\gamma}^\N$, where $u_\gamma$ is a minimal period of $\gamma$. Assume as above, that the prefixes $q_\gamma$ are long enough, so that the intervals $[q_\gamma]$, $\gamma\in V_2$ are pairwise disjoint and contain exactly one number from $V\cup \{0,1\}$, the number $\gamma$. We claim that $H_V$ is isomorphic to a group $H_W$ such that $\tau(W)\equiv \tau(U)\equiv \tau(V)$ and $W=W_1\cup W_2\cup W_3$ (the natural partition of $W$) satisfies the following conditions.
\begin{enumerate}
\item[(1)] $W_1=U_1$.
\item[(2)] For each $\delta\in W_2$, $\delta=.p_\beta u_\gamma^\N$ where $\beta$ and $\gamma$ occupy the same position in the ordered sets $U$ and $V$ respectively as $\delta$ in $W$ (that is, the two  natural isomorphisms $\psi_{wu}\colon W\to U$, $\psi_{wv}\colon W \to V$  take $\delta$ to $\beta$ and $\delta$ to $\gamma$ respectively).
\end{enumerate}
Indeed, the conditions on the intervals $[p_j]$ and $[q_j]$ and Lemma \ref{choice} guarantee that there exists $f\in F$ such that
\begin{enumerate}
\item[(1)] for each $\gamma\in V_1$, $f(\gamma)\in U_1$; and
\item[(2)] for each $\gamma\in V_2$, $f$ has a pair of branches $q_\gamma\rightarrow p_{\psi_{vu}(\gamma)}$, where $\psi_{vu}$ is the natural isomorphism from $V$ to $U$.
\end{enumerate}
Conjugating $H_V$ by $f$ yields a group $H_W$ as described, where $W=f(V)$.

It will suffice to prove the isomorphism of $H_U$ and $H_W$.
We start by constructing isomorphic subgroups $K_U\le H_U$ and $K_W\le H_W$.

Let $I$ be the interval $[0,1]$. We remove from $I$ the endpoints $0$ and $1$, all the numbers in $U$ as well as the entire intervals $[p_\beta]$ for $\beta\in U_2$. The result is a set $J_U\subseteq [0,1]$. If $|U|=n$, then $J_U$ is a union of $n+1$ open intervals $(a_i,b_i)$, $i=1,\dots,n+1$ ordered from left to right.
 We define $K_U$ to be the subgroup generated by all $F_{[a_i,b_i]}$, that is, the direct product of these subgroups:
$$K_U=\prod_{i=1}^{n+1} F_{[a_i,b_i]}.$$
Clearly, $K_U$ fixes all points in $U$. In particular $K_U\le H_U$.

Similarly, removing from $I$ the endpoints $0$ and $1$, all numbers from $W$, as well as the intervals $[p_\beta]$ for $\beta\in U_2$ results in a union of $n+1$ open intervals $(c_i,d_i)$ for $i=1,\dots,n+1$.
We let
$$K_W=\prod_{i=1}^{n+1} F_{[c_i,d_i]}.$$

Note that $a_i, b_i, c_i, d_i$ are endpoints of the removed subintervals or points from $U_1\cup U_3\cup W_3$. Hence each $a_i,b_i, c_i, d_i$ is either in $\zz$ or an irrational number. Furthermore, if $a_i$ (resp. $b_i$) is in $\zz$ then $c_i=a_i$ (resp. $d_i=b_i$). If $a_i$ (resp. $b_i$) is irrational, then $c_i$ (resp. $d_i$)  is also irrational (this follows from the equality of types $\tau(U)$ and $\tau(W)$).
We shall be interested in an isomorphism from $K_U$ to $K_W$ with specific properties. For that, in any interval $(a_i,b_i)$ where exactly one of the endpoints is in $\zz$ (and hence exactly one of the endpoints of $(c_i,d_i)$ is in $\zz$ and coincides with the finite dyadic endpoint of $(a_i,b_i)$), we choose a number $y_i\in(a_i,b_i)\cap(c_i,d_i)\cap \zz$ (notice that the intersection cannot be empty since the intervals  $(a_i,b_i)$ and $(c_i,d_i)$ share an endpoint from $\zz$).

\begin{Lemma}\label{phi}
There exists an isomorphism $\phi\colon K_U\to K_W$  such that for each $i=1,\dots,n+1$, we have the following
\begin{enumerate}
\item $\phi(F_{[a_i,b_i]})= F_{[c_i,d_i]}$
\item If $a_i$ and $b_i$ are in $\zz$, then $\phi$ restricted to $F_{[a_i,b_i]}$ is the identity.
\item If $a_i$ is in $\zz$ and $b_i$ is irrational then $\phi$ restricted to $F_{[a_i,y_i]}$ is the identity and
for any $x\in (a_i,y_i)$, $\phi(F_{[x,b_i)})= {F_{[x,d_i)}}$.
\item If $a_i$ is irrational and $b_i$ is in $\zz$ then the restriction of $\phi$ to $F_{[y_i,b_i]}$ is the identity and
for any $x\in (y_i,b_i)$, $\phi(F_{(a_i,x]})= {F_{(c_i,x]}}$.
\end{enumerate}
\end{Lemma}

\begin{proof}
To define the isomorphism $\phi$ it suffices to define for each $i=1,\dots,n+1$ an isomorphism $\phi_{i}\colon F_{[a_i,b_i]}\to F_{[c_i,d_i]}$ and let $\phi=\phi_1\times \dots\times \phi_{n+1}$. That would guarantee condition (1).

If $a_i$ and $b_i$ are  both irrational, then $F_{[a_i,b_i]}=F_{(a_i,b_i)}$ and by Lemma \ref{der}, it is isomorphic to $F_{(c_i,d_i)}=F_{[c_i,d_i]}$. We let $\phi_i$ be any isomorphism between the groups.

If $a_i$ and $b_i$ are both in $\zz$, then $[a_i,b_i]=[c_i,d_i]$ and we take $\phi_i$ to be the identity automorphism of   $F_{[a_i,b_i]}$. This guarantees that $\phi$ will satisfy condition (2) of the lemma.

If $a_i$ is in $\zz$ and $b_i$ is irrational then $c_i=a_i$ and $d_i$ is irrational. Thus, $F_{[a_i,b_i]}=F_{[a_i,b_i)}$ and $F_{[c_i,d_i]}=F_{[a_i,d_i)}$. Since $y_i\in (a_i,b_i)\cap(a_i,d_i)\cap\zz$, we apply lemma \ref{0} to find an isomorphism $\phi_i$ so that condition (3) of the lemma would be satisfied.

Similarly, if $a_i$ is irrational and $b_i$ is in $\zz$ then $c_i$ is irrational and $d_i=b_i$. Thus, $F_{[a_i,b_i]}=F_{(a_i,b_i]}$ and $F_{[c_i,d_i]}=F_{(c_i,b_i]}$. Since $y_i\in (a_i,b_i)\cap(c_i,b_i)\cap\zz$, we apply Lemma \ref{1} to find an isomorphism $\phi_i$ so that condition (4) of the lemma would be satisfied.
\end{proof}

Next, we choose $m=|U_2|=|W_2|$ commuting elements $g_\beta\in H_U$ for $\beta\in U_2$ and $m$ commuting elements $f_\delta\in H_W$ for $\delta\in W_2$. To do so, we first choose a number from $\zz$ in each interval $[a_i,b_i]$ where at least one of the endpoints is in $\zz$. If $a_i$ is in $\zz$ and $b_i$ is irrational, we choose a number $x_i\in \zz$ in $(a_i,y_i)=(c_i,y_i)$. Similarly, if $a_i$ is irrational and $b_i$ is in $\zz$ we choose  a number $x_i\in \zz$ in $(y_i,b_i)=(y_i,d_i)$.
If $a_i$ and $b_i$ are both in $\zz$ we let $x_i=\frac{a_i+b_i}{2}$.

Recall that elements of $U_2$ are of the form $\beta=.p_\beta s_\beta^\N$,  and elements in $W_2$ are $\delta=.p_\delta u_\delta^\N\in W_2$.
Let, again, $\psi_{wu}$ be the order-preserving bijection $W\to U$. For every $\beta\in U_2$  and $\delta=\psi_{wu}\iv(\beta)$, we have $p_{\delta}=p_{\beta}$. We choose the element $g_\beta\in H_U$ and $f_\delta \in H_W$ as follows.
By construction, $\beta$ belongs to an interval $(b_i,a_{i+1})$ for some $i$ where $b_i, a_{i+1}$ are in $\zz$.  Let $x_i$ be the number from $\zz$ chosen in $[a_i,b_i]$ and let $x_{i+1}$ be the number from $\zz$ chosen in $[a_{i+1}, b_{i+1}]$.

We define $g_\beta$ and $f_\delta$ to be functions such that
\begin{enumerate}
\item[(1)] $g_\beta$ and $f_\delta$ have support in $[x_{i},x_{i+1}]$.
\item[(2)] $g_\beta$ has a pair of branches $p_\beta s_\beta\rightarrow p_\beta$ and $f_\delta$ has a pair of branches $p_\beta u_\delta \rightarrow p_\beta$.
\item[(3)] $g_\beta$ and $f_\delta$ coincide on the interval $[a_{i+1},x_{i+1}]=[c_{i+1},x_{i+1}]$ 
and map it linearly onto the the right half of itself.
\item[(4)] $g_\beta$ and $f_\delta$ coincide on the interval $[x_{i},b_{i}]=[x_{i},d_{i}]$ and map it linearly onto the left half of itself.

\end{enumerate}
By Lemma \ref{choice} one can indeed choose elements $g_\beta,f_\delta$ as described.
We note that since the interiors of the intervals $[x_i,x_{i+1}]$  are pairwise disjoint, the elements $g_\beta, \beta\in U_2,$ pairwise commute and the elements $f_\delta, \delta\in W_2,$ commute as well.
We let $G_U=\la g_\beta,\beta\in U_2\ra$ and $G_W=\la f_\delta, \delta\in W_2\ra$. Clearly, $G_U\cong G_W\cong \mathbb{Z}^m$.

\begin{Lemma}\label{generate}
$H_U$ is generated by $K_U$ and $G_U$. Similarly, $H_W$ is generated by $K_W$ and  $G_W$.
\end{Lemma}

\begin{proof}
Notice that the elements $g_\beta, \beta\in U_2,$ belong to $H_U$. Indeed, for each $\beta\in U_2$, $g_\beta$ has a pair of branches $p_\beta s_\beta \rightarrow p_\beta$. As such, it fixes $\beta=.p_\beta s_\beta^\N$. It also fixes all other numbers from $U$ because these numbers are not in the support of $g_\beta$. Thus, $\la K_U \cup G_U\ra \subseteq H_U$.

For the opposite inclusion, recall that $K_U=\prod_{i=1}^{n+1} F_{[a_i,b_i]}$ is the subgroup of $F$ of all functions which fix all points in $U$ as well as all intervals $[p_\beta]$ for $\beta\in U_2$.
Now let $h\in H_U$. By Lemma \ref{rational}, for each $\beta\in U_2$, the function $h$ has a pair of branches of the form $p_\beta s_\beta^{\ell_\beta}\rightarrow p_\beta s_\beta^{r_\beta}$ for some $\ell_\beta,r_\beta\ge 0$.
Let $\alpha$ be the smallest number in $U_2$ and consider
the element $h_\alpha=g_\alpha^{-\ell_\alpha}hg_\alpha^{r_\alpha}$. Since $g_\alpha^{-\ell_\alpha}$ has a pair of branches $p_\alpha\rightarrow p_\alpha s_\alpha^{\ell_\alpha}$, $h$ has a pair of branches $p_\alpha s_\alpha ^{\ell_\alpha}\rightarrow p_\alpha s_\alpha^{r_\alpha}$ and $g_\alpha^{r_\alpha}$ has a pair of branches $p_\alpha s_\alpha^{r_\alpha}\rightarrow p_\alpha$, the function $h_\alpha$ has a pair of branches $p_\alpha\rightarrow p_\alpha$. In other words, it fixes the interval $[p_\alpha]$. Notice that for all $\beta\in U_2$ such that $\beta\neq\alpha$, the support of $g_\alpha$ is disjoint from $[p_\beta]$. Thus, the
functions $h$ and $h_\alpha$ have the same pairs of branches $p_\beta s_\beta^{\ell_\beta}\rightarrow p_\beta s_\beta^{r_\beta}$ for $\beta\in U_2\setminus \{\alpha\}$. Considering other numbers from $U_2$ one by one, we find an element $ghg'$ where $g,g'\in G_U$  which
 fixes all intervals $[p_\beta]$ for $\beta\in U_2$. Since $ghg'$ also fixes all points in $U$, we have that $ghg'\in K_U$. Thus, $h\in G_UK_UG_U$.
\end{proof}

To finish the proof we prove that the group $H_U$ (resp. $H_W$) is an iterated ascending HNN-extension of the group $K_U$ (resp. $K_W$) with $m$-stable letters.

For this we are going to use the following simple and well known fact (see, for example, \cite[Lemma 2]{DruSap}).

\begin{Lemma} \label{l:hnn} Suppose that a group $G$ contains  a subgroup $K$ and an element $t$ such that
\begin{enumerate}
\item $G$ is generated by $K$ and $t$,
\item $t^n\not\in K$ for all $n>0$,
\item $K^t \le K$.
\end{enumerate}
Then $G$ is isomorphic to an ascending HNN-extension of $K$ with stable letter $t$.
\end{Lemma}

For each $\beta\in U_2$ we have
 $${K_U}^{g_\beta}\le K_U.$$
Indeed, $K_U$ is the group of all functions with support in $S=[a_1,b_1]\cup\dots\cup[a_{n+1},b_{n+1}]$. Since for every $\beta\in U_2$ $$\Supp(g_\beta)\cap S=[x_i,b_{i}]\cup[a_{i+1},x_{i+1}]$$
where $\beta\in(b_i,a_{i+1})$ and $g_\beta$ maps $[x_{i},b_{i}]\cup[a_{i+1},x_{i+1}]$ into itself, the support of each function in ${K_U}^{g_\beta}$ is inside $S$.

Let $U_2=\{\beta_1,...,\beta_m\}$. Since $g_\beta, \beta\in U_2,$ commute, for each $j=2,...,m$ we have
$$\la K_U,g_{\beta_1},\dots,g_{\beta_{j-1}}\ra^{g_{\beta_j}}\le \la K_U,g_{\beta_1},\dots,g_{\beta_{j-1}}\ra.$$
Since in addition $H_U$ is generated by $K_U$ and $g_\beta, \beta\in U_2$ and $g_{\beta_j}^k$ does not belong to
$\la K_U,g_{\beta_1},\dots,g_{\beta_{j-1}}\ra$ for every $k>0$ and every $j\ge 1$, the group $H_U$ is an iterated HNN-extension of $K_U$ with $m$ stable letters $g_\beta,\beta\in U_2$ by Lemma \ref{l:hnn}.
Similarly, $H_W$ is an iterated HNN-extension of $K_W$ with $m$ stable letters $f_{\delta_1},\dots,f_{\delta_m}$ where $W_2=\{\delta_1,\dots,\delta_m\}$.

Theorem \ref{th:m1} will follow if we prove that the actions of $g_{\beta_1},\dots,g_{\beta_m}$ on $K_U$ and the actions of $f_{\delta_1},\dots,f_{\delta_m}$ on $K_W$ commute with the isomorphism $\phi\colon K_U\to K_W$ from Lemma \ref{phi}.

We choose a generating set $Y$ of $K_U$ by fixing a generating set of $F_{[a_i,b_i]}$ for each $i=1,\dots,n+1$ and letting $Y$ be the union of these sets. If $a_i$ and $b_i$ are either both from $\zz$ or both irrational, we take the entire group $F_{[a_i,b_i]}$ as a generating set of itself. If $a_i$ is from $\zz$ and $b_i$ is irrational, recall that there are chosen  numbers $x_i<y_i$ in $(a_i,b_i)\cap \zz$. We take $F_{[a_i,y_i]}\cup F_{[x_i,b_i]}$ as a generating set of $F_{[a_i,b_i]}$ (notice that by Lemma \ref{xy}, this union generates $F_{[a_i,b_i]}$). Similarly, if $a_i$ is irrational and $b_i$ is from $\zz$, then there are chosen  numbers $y_i<x_i$ from $\zz$ in $(a_i,b_i)$. We take $F_{[a_i,x_i]}\cup F_{[y_i,b_i]}$ as a generating set of $F_{[a_i,b_i]}$.

The following lemma completes the proof of Theorem \ref{th:m1}.

\begin{Lemma}
For each $h\in Y$ and $\beta\in U_2$ we have
$$\phi(h^{g_\beta})=\phi(h)^{f_\delta}$$ where $\delta=\psi_{wu}\iv(\beta)$.
\end{Lemma}

\begin{proof}
By the construction of $Y$, $h\in F_{[a_i,b_i]}$ for some $i$. Suppose that the supports of $g_\beta$ and $h$ are disjoint. Then $h^{g_\beta}=h$. By Condition (1) in Lemma \ref{phi}, $\phi(h)\in F_{[c_i,d_i]}$. Since the support of $f_\delta$ is  disjoint from $[c_i,d_i]$, we get that $\phi(h)^{f_\delta}=\phi(h)=\phi(h^{{g_\beta}})$ as required.

Thus we can assume that the support of $g_{\beta}$ and $h$ are not disjoint. This implies that $\beta$ belongs to either the interval $[b_{i-1}, a_i]$ or to the interval $[b_i, a_{i+1}]$. We only consider the first case, the other case being similar.

Consider the interval $(a_{i},b_{i})$. The number  $a_{i}$ is from $\zz$. If $b_{i}$ is also from $\zz$, then by Condition (2) of Lemma \ref{phi}, the restriction of $\phi$ to $F_{[a_{i},b_{i}]}$ is the identity. In particular $\phi(h)=h$. Notice that $g_\beta$ maps the interval $[a_{i},b_{i}]$ into a sub-interval of itself. Thus, $h^{{g_\beta}}$ also belongs to $F_{[a_{i},b_{i}]}$. Therefore $\phi(h^{{g_\beta}})=h^{{g_\beta}}$. Thus it suffices to prove that $h^{g_\beta}=h^{f_\delta}$. That follows immediately from Conditions (1) and (3) in the definition of $g_\beta$ and $f_\delta$. Indeed, these conditions  imply that $g_\beta$ and $f_\delta$ coincide on the support of $h$ .

Next, we assume that $b_{i}$ is irrational. Since $h\in  F_{[a_{i},b_{i}]}\cap Y$, either $h\in F_{[a_{i},y_{i}]}$ or $h\in F_{[x_{i},b_{i}]}$.
In the first case, by Condition (3) in Lemma \ref{phi}, we have $\phi(h)=h$. Similarly, since $g_\beta$ maps the interval $[a_{i},y_{i}]$ onto a sub-interval of itself, the conjugate $h^{g_\beta}$ also belongs to  $F_{[a_{i},y_{i}]}$. As such $\phi(h^{{g_\beta}})=h^{g_\beta}$ and it suffices to prove that $h^{g_\beta}=h^{f_\delta}$. That follows as before from the fact that $g_\beta$ and $f_\delta$ coincide on the support of $h$. If $h\in F_{[x_{i},b_{i}]}=F_{[x_{i},b_{i})}$, then $g_\beta$ commutes with $h$ (indeed, their supports have disjoint interiors). By condition (3) from Lemma \ref{phi}, $\phi(h)\in F_{[x_{i},c_{i})}$, which implies that $\phi(h)$ commutes with $f_\delta$. As such $\phi(h^{g_\beta})=\phi(h)=\phi(h)^{f_\delta}$.
\end{proof} That completes the proof of Theorem \ref{th:m1}.
\end{proof}

\subsection{Algebraic structure of stabilizers of finite sets}\label{sec:alg}

The proof of Theorem \ref{th:m1} gives us more explicit information about the structure of the stabilizer $H_U$ for a finite set $U$. If $U=\{\alpha\}$ for $\alpha\in(0,1)$ such that $\alpha\notin U_2$, then
$$H_U=F_{[0,\alpha]}F_{[\alpha,1]}.$$
Indeed, this is obvious for $\alpha\in U_1$. If $\alpha\in U_3$ then every element of $H_U$ fixes an open neighborhood of $\alpha$ and thus belongs to the direct product $F_{[0,\alpha]}F_{[\alpha,1]}=F_{[0,\alpha)}F_{(\alpha,1]}$. Similarly, let $U=\{\alpha_1,\dots,\alpha_n\}\subseteq(0,1)$. Let $U=U_1\cup U_2\cup U_3$ be the natural partition. If $U_2=\emptyset$, then
$$H_U=F_{[0,\alpha_1]}\times F_{[\alpha_1,\alpha_2]}\times\dots\times F_{[\alpha_{n-1},\alpha_n]}\times F_{[\alpha_n,1]}.$$
If $U_2\neq\emptyset$ and $|U_1\cup U_3|=k$, then $U_1\cup U_3$ separates $U_2$ into a union of $k+1$ disjoint subsets
$U_{2,1},\dots,U_{2,k+1}$ (some of which might be empty). Let $i_1,\dots,i_{k}$ be the indexes such that $\alpha_{i_j}\in U_1\cup U_3$. Since $H_U=H_{U_2}\cap H_{U_1\cup U_3}$, the above equation (for the case $U_2=\emptyset$) implies that
$$\begin{array}{lr} H_U=(F_{[0,\alpha_{i_1}]}\cap H_{U_{2,1}})
\times (F_{[\alpha_{i_1},\alpha_{i_2}]}\cap H_{U_{2,2}})& \times\dots\times (F_{[\alpha_{i_{k-1}},\alpha_{i_{k}}]}\cap H_{U_{2,k}}) \\ & \times (F_{[\alpha_{i_k},1]} \cap H_{U_{2,k+1}}).
\end{array}
$$
For a set of numbers $V=\{\beta_1,\dots,\beta_m\}$ in $[0,1]$, if $\beta_1,\beta_m\notin U_2$, we let
$$B_V=F_{[\beta_1,\beta_m]}\cap H_{V\setminus\{\beta_1,\beta_m\}}.$$
Theorem \ref{th:m1} implies the following.

\begin{Lemma}\label{ib}
Let $U=\{\alpha_1,\dots,\alpha_n\}$ and $V=\{\beta_1,\dots,\beta_n\}$ be sets of numbers from $[0,1]$. Suppose that $\tau(U)\equiv \tau(V)$ and $\alpha_1,\alpha_n\not\in U_2$. Then the groups $B_U$ and $B_V$ are isomorphic.
\end{Lemma}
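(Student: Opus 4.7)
The strategy is to show that the isomorphism $\Phi \colon H_U \to H_W$ constructed in the proof of Theorem \ref{th:m1} (after first replacing $V$ by a suitable $F$-conjugate $W = f(V)$) restricts to an isomorphism $B_U \to B_W$. Combined with the conjugation isomorphism $B_V \cong B_W$ induced by $f$, this yields $B_U \cong B_V$.

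Recall that in the proof of Theorem \ref{th:m1}, the groups $H_U$ and $H_W$ are realized as iterated ascending HNN-extensions of base groups $K_U = \prod_i F_{[a_i,b_i]}$ and $K_W = \prod_i F_{[c_i,d_i]}$ with stable letters $g_\beta$ $(\beta \in U_2)$ and $f_\delta$ $(\delta \in W_2)$, and $\Phi$ extends the factor-preserving isomorphism $\phi \colon K_U \to K_W$ of Lemma \ref{phi} via $g_\beta \mapsto f_\delta$. The hypothesis $\alpha_1, \alpha_n \notin U_2$ ensures that every $\beta \in U_2$ lies strictly between $\alpha_1$ and $\alpha_n$, so the $g_\beta$ constructed in Theorem \ref{th:m1} --- supported in a dyadic sub-interval $[x_i, x_{i+1}]$ straddling $[p_\beta]$ --- has support inside $[\alpha_1,\alpha_n]$ and hence belongs to $B_U$. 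Thus $G_U := \langle g_\beta : \beta \in U_2\rangle \subseteq B_U$, and analogously $G_W \subseteq B_W$.

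Let $K_U^{\mathrm{inn}} \leq K_U$ denote the sub-direct product of those factors $F_{[a_i,b_i]}$ with $[a_i,b_i] \subseteq [\alpha_1,\alpha_n]$, and define $K_W^{\mathrm{inn}}$ analogously. By Lemma \ref{generate}, every $h \in B_U$ can be written as $h = kg$ with $k \in K_U$ and $g \in G_U$; since $g \in B_U$, also $k = hg^{-1}$ has support inside $[\alpha_1,\alpha_n]$, forcing $k \in K_U^{\mathrm{inn}}$. Thus $B_U = K_U^{\mathrm{inn}} \cdot G_U$, and similarly $B_W = K_W^{\mathrm{inn}} \cdot G_W$. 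The condition $\tau(U) \equiv \tau(W)$ guarantees that the indexing of intervals matches between the two decompositions, so $[a_i,b_i] \subseteq [\alpha_1,\alpha_n]$ if and only if $[c_i,d_i] \subseteq [f(\beta_1),f(\beta_n)]$, whence $\phi(K_U^{\mathrm{inn}}) = K_W^{\mathrm{inn}}$. Therefore $\Phi(B_U) = K_W^{\mathrm{inn}} \cdot G_W = B_W$, as required.

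The main obstacle is the bookkeeping verification that each $g_\beta$ lies in $B_U$ and that $\phi$ matches inner factors to inner factors; both facts follow immediately from the hypothesis $\alpha_1, \alpha_n \notin U_2$, which keeps every prefix-interval $[p_\beta]$ (together with its straddling dyadic neighbourhood) strictly inside $[\alpha_1,\alpha_n]$, and from $\tau(U) \equiv \tau(W)$, which preserves the outer-versus-inner structure of the interval decomposition.
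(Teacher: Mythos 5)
Your proposal is correct and takes essentially the same route as the paper, which likewise derives the lemma from the isomorphism built in the proof of Theorem \ref{th:m1} by observing that it respects the splitting of $H_U$ into the part supported in $[\alpha_1,\alpha_n]$ (namely $B_U$) and the two outer factors $F_{[0,\alpha_1]}$, $F_{[\alpha_n,1]}$; your $K_U^{\mathrm{inn}}\cdot G_U$ bookkeeping just makes explicit the verification the paper leaves implicit. The only detail you skip is the preliminary conjugation by an element of $\pl_2(\mathbb{R})$ needed to move $U$ and $V$ into $(0,1)$ when $\alpha_1=0$ or $\alpha_n=1$, since Theorem \ref{th:m1} is stated for subsets of the open interval.
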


\begin{proof}
It follows from the proof of Theorem \ref{th:m1}. Indeed, using conjugation by an orientation preserving element of $\pl_2(\mathbb{R})$, one can assume that $U$ and $V$ are sets of numbers in $(0,1)$.
By Theorem \ref{th:m1}, we have that
$$H_U=F_{[0,\alpha_1]}\times B_U\times F_{[\alpha_n,1]}\ \ \hbox{ and }\ \ H_V=F_{[0,\beta_1]}\times B_V\times F_{[\beta_n,1]}.$$
are isomorphic. The isomorphism constructed in the proof of Theorem \ref{th:m1} maps $F_{[0,\alpha_1]}$ onto $F_{[0,\beta_1]}$, $F_{[\alpha_n,1]}$ onto $F_{[\beta_n,1]}$ and $B_U$ onto $B_V$.
\end{proof}

Let $U=\{\alpha_1,\dots,\alpha_n\}\subseteq[0,1]$ such that $\alpha_1,\alpha_n\notin U_2$. By Lemma \ref{ib}, the isomorphism class of $B_U$ depends only on $\tau(U)$. Thus, if $w\equiv \tau(U)$, we will use the notation $B_w$ for the group $B_U$.
If $\alpha_1,\alpha_n\in U_1$, then Lemma \ref{ib} implies that $B_w\cong B_{U\setminus\{\alpha_1,\alpha_n\}\cup\{0,1\}}=H_{U\setminus\{\alpha_1,\alpha_n\}}$.

\begin{Lemma}\label{back}
Let $U=\{\alpha_1,\dots,\alpha_n\}$ and $V=\{\beta_1,\dots,\beta_n\}$ be finite sets of numbers in $[0,1]$, such that $\tau(U)$ does not start or end with $2$. Assume that the word $\tau(U)$ is equal to $\tau(V)$ read backwards.
Then $B_U$ is isomorphic to $B_V$.
\end{Lemma}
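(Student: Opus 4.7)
My plan is to reduce Lemma \ref{back} to Lemma \ref{ib} by using the reflection of $[0,1]$ about its midpoint, exploiting the fact (already noted in Section \ref{sec:copies}) that this reflection normalizes $F$.

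Let $\zeta\colon[0,1]\to[0,1]$ be the map $t\mapsto 1-t$, so that $\zeta\in\Homeo([0,1])$ and $F^{\zeta}=F$. In particular, conjugation by $\zeta$ induces an automorphism of $F$. The first step is to observe that $\zeta$ preserves the natural partition of any finite set: since $t\mapsto 1-t$ restricts to a bijection $\zz\cap[0,1]\to\zz\cap[0,1]$, and also sends rationals to rationals and irrationals to irrationals, if $U=U_1\cup U_2\cup U_3$ is the natural partition of $U$, then $\zeta(U_i)$ is the $i$-th piece of the natural partition of $\zeta(U)$. Since $\zeta$ is order-reversing, if we list $\zeta(U)=\{1-\alpha_n,\dots,1-\alpha_1\}$ in increasing order, the type $\tau(\zeta(U))$ is precisely the word $\tau(U)$ read backwards. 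By hypothesis this agrees with $\tau(V)$.

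The second step is to chase the subgroup $B_U$ through conjugation by $\zeta$. For any interval $[a,b]\subseteq[0,1]$ we have $F_{[a,b]}^{\zeta}=F_{[1-b,1-a]}$, and for any finite $S\subseteq[0,1]$ we have $H_S^{\zeta}=H_{\zeta(S)}$, both inside $\Homeo([0,1])$ (and in fact inside $F$ since $F^{\zeta}=F$). Therefore
\[
B_U^{\zeta}=\bigl(F_{[\alpha_1,\alpha_n]}\cap H_{U\setminus\{\alpha_1,\alpha_n\}}\bigr)^{\zeta}=F_{[1-\alpha_n,1-\alpha_1]}\cap H_{\zeta(U)\setminus\{1-\alpha_n,1-\alpha_1\}}=B_{\zeta(U)}.
\]
Since $\alpha_1,\alpha_n\notin U_2$ (because $\tau(U)$ neither begins nor ends with the letter $2$), the same holds for the extreme points of $\zeta(U)$, so $B_{\zeta(U)}$ is defined. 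Conjugation by $\zeta$ thus realises an isomorphism $B_U\cong B_{\zeta(U)}$.

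For the third step, apply Lemma \ref{ib} to the sets $\zeta(U)$ and $V$: they have the same size, the same type (as computed above), and both have extreme points outside $U_2$. Lemma \ref{ib} gives $B_{\zeta(U)}\cong B_V$, and composing yields $B_U\cong B_V$. There is no genuine obstacle here; the only point to be careful about is that $\zeta$ lies in $\Homeo([0,1])$ rather than $F$, but this is harmless because $F^{\zeta}=F$, so the conjugation map is an (outer) automorphism of $F$ and carries the intersection defining $B_U$ to the intersection defining $B_{\zeta(U)}$ verbatim.
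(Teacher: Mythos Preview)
Your proof is correct and follows essentially the same approach as the paper: conjugate by the reflection $\zeta(t)=1-t$ to reverse the type word, then invoke Lemma~\ref{ib}. The only cosmetic difference is that the paper conjugates $B_V$ rather than $B_U$, but this is symmetric and your version is in fact more carefully written.
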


\begin{proof}
If one conjugates $B_V$, where $V=\{\beta_1,\dots,\beta_n\}$ by the function $\zeta(t)=1-t$ in $\Homeo([0,1])$, one gets $B_W$ where $W=\{1-\beta_n,\dots,1-\beta_1\}\subseteq[0,1]$. Since $\tau(U)\equiv\tau(W)$, the result follows from Lemma \ref{ib}.
\end{proof}

Now let $U=\{\alpha_1,\dots,\alpha_{n}\}$ be a finite set of numbers in $(0,1)$. Let $\alpha_0=0$, $\alpha_{n+1}=1$ and $U'=U\cup\{\alpha_0,\alpha_{n+1}\}$. Let $w\equiv \tau(U')$. Let us represent $w$ as the product $1u_1i_1u_2i_2...u_{k+1}1$ where $i_j\in \{1,3\}, j=1,...,k$ and each word $u_i$ contains only letter 2.
The discussion above and Lemma \ref{ib} clearly imply that
$$H_U\cong B_{1u_1i_1}\times B_{i_1u_2i_2}\times\dots\times B_{i_{k}u_{k+1}1}.$$
Note that by Lemma \ref{back}, to completely describe the structure of $H_U$, we only need to describe groups $B_w$ where $w$ is a word of one of the following 6 kinds:
$11$, $33$, $13$, $12^m1$, $32^m3$ and $12^m3$ for $m\in\mathbb{N}$.
We note that $B_{11}$ is isomorphic to $F$, $B_{33}$ is isomorphic to the derived subgroup of $F$ (see Lemma \ref{der}) and that $B_{13}$ is isomorphic to the normal subgroup $\mathcal L$ of $F$ of all functions with slope $1$ at $1$ (indeed, $B_{13}\cong F_{[0,\frac{\pi}{4})}\cong F_{[0,1)}=\mathcal L$ by Lemma \ref{0}).
The group $B_w$ is more difficult to describe in the remaining $3$ cases. We leave the cases $w\equiv 32^m3$ and $w\equiv 12^m3$ to the reader. For $w\equiv 12^m1, m\in\mathbb{N}$, the proof of Theorem \ref{th:m1} shows the following.

\begin{Lemma}\label{HNN-ext}
Let $U$ be a set of $m$ rational numbers  in $(0,1)\setminus \zz$. Then $H_U\cong B_{12^m1}$ is isomorphic to an iterated ascending HNN-extension of $K=F^{m+1}$ with $m$ stable letters $t_1,\dots,t_m$ such that $t_j$ commutes with $t_i$ and $K^{t_j}< K$, so each $t_j$ corresponds to an endomorphism of $K$. That  endomorphism $\phi_j$ is defined as follows.
We denote by $\iota_i$ the injection of $F$ into the $i$th direct summand of $F^{m+1}$. Then
$$\begin{array}{ll} \phi_j\colon
& \iota_i(x_0)\to \iota_i(x_0) \hbox{ for all } i\notin\{j,j+1\}, \\
& \iota_i(x_1)\to \iota_i(x_1) \hbox{ for all } i\notin\{j,j+1\}, \\
& \iota_{j}(x_0)\to\iota_j(x_1x_0^{-1}) \\
& \iota_{j}(x_1)\to\iota_j(x_1) \\
& \iota_{j+1}(x_0)\to\iota_{j+1}(x_0x_1^{-1}) \\
& \iota_{j+1}(x_1)\to\iota_{j+1}(x_1^2x_2^{-1}x_1^{-1}).
\end{array}$$
\end{Lemma}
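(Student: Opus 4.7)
The plan is to derive this statement from the proof of Theorem~\ref{th:m1}, specialized to the case $U_1=U_3=\emptyset$, and then to identify the endomorphisms $\phi_j$ explicitly. Under the hypothesis of the lemma, the natural partition reduces to $U=U_2$, so the intervals $[a_i,b_i]$ produced in the proof of Theorem~\ref{th:m1} all have both endpoints in $\zz$. Each $F_{[a_i,b_i]}$ is therefore a natural copy of $F$, giving $K=K_U\cong F^{m+1}$ with $\iota_i$ the coordinate injection (via the $\pl_2(\mathbb{R})$-conjugation sending $[0,1]$ to $[a_i,b_i]$ and the midpoint $1/2$ to $x_i=(a_i+b_i)/2$). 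The proof of Theorem~\ref{th:m1} also produces commuting stable letters $g_{\beta_1},\ldots,g_{\beta_m}$, where $\beta_j$ is the $j$-th element of $U$ in increasing order, and verifies the hypotheses of Lemma~\ref{l:hnn}, so the iterated ascending HNN presentation is immediate, with each $\phi_j$ realized as conjugation by $g_{\beta_j}$.

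What remains is to compute $\phi_j$ on the canonical generators of each factor. Because $g_{\beta_j}$ has support in $[x_j,x_{j+1}]$, it commutes with every $F_{[a_i,b_i]}$ for $i\notin\{j,j+1\}$, which gives the first two families of identities at once. For $i\in\{j,j+1\}$, I would transport the action of $g_{\beta_j}$ to $[0,1]$ through $\Psi_i$: the restriction of $g_{\beta_j}$ to $[a_j,b_j]$ becomes the piecewise-linear map that fixes $[0,1/2]$ and sends $[1/2,1]$ affinely onto $[1/2,3/4]$, while its restriction to $[a_{j+1},b_{j+1}]$ becomes the map that fixes $[1/2,1]$ and sends $[0,1/2]$ affinely onto $[1/4,1/2]$. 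The images of $x_0$ and $x_1$ under conjugation by each of these two normalized maps are then read off by composing tree-diagrams as in Remark~\ref{r:000} (equivalently, by composing the piecewise-linear formulas for $x_0$ and $x_1$ with those of the normalized map on the common refinement of $[0,1]$), and compared with the tree-diagrams of the words on the right-hand side of the formula.

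The main obstacle will be the identity $\phi_j(\iota_{j+1}(x_1))=\iota_{j+1}(x_1^2x_2^{-1}x_1^{-1})$: here the support of the relevant copy of $x_1$ lies entirely in the compressed region, so the reduced tree-diagram of the conjugate has several leaves, and recognizing it as this four-letter word uses the infinite-presentation relation $x_i^{x_j}=x_{i+1}$ for $j<i$ (to express the compressed copy of $x_1$ in terms of $x_1$ and $x_2$). The remaining three identities are much shorter: $\phi_j(\iota_j(x_1))=\iota_j(x_1)$ is immediate once one verifies that the relevant copy of $x_1$ is supported in the fixed region of the normalized $g_{\beta_j}$ on $[a_j,b_j]$, and each of the two $x_0$-identities reduces to a short caret manipulation in which a single common caret must be inserted before reduction to a two-letter word.
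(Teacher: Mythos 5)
Your overall route is the same as the paper's: specialize the proof of Theorem \ref{th:m1} to the case $U=U_2$, identify $K_U$ with $F^{m+1}$ via the natural copies $F_{[a_i,b_i]}$, invoke Lemma \ref{l:hnn} with the commuting elements $g_{\beta_j}$ as stable letters, and read off $\phi_j$ from the normalized restrictions of $g_{\beta_j}$ to $[a_j,b_j]$ and $[a_{j+1},b_{j+1}]$. You also describe those normalized restrictions correctly (fix $[0,\frac12]$ and send $[\frac12,1]$ onto $[\frac12,\frac34]$ on the $j$-th interval; fix $[\frac12,1]$ and send $[0,\frac12]$ onto $[\frac14,\frac12]$ on the $(j+1)$-st), and the treatment of the factors $i\notin\{j,j+1\}$ and of the HNN structure is fine.

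The gap is in the verification of the two $x_1$-formulas, where your support claims contradict your own normalization. Since $\Supp(x_1)=[\frac12,1]$, the copy $\iota_j(x_1)$ is supported in the \emph{right} half of $[a_j,b_j]$, which is precisely the half that $g_{\beta_j}$ compresses, not the half it fixes; while $\iota_{j+1}(x_1)$ is supported in the right half of $[a_{j+1},b_{j+1}]$, which is the half $g_{\beta_j}$ fixes pointwise. So the step you call immediate, $\phi_j(\iota_j(x_1))=\iota_j(x_1)$, does not go through: conjugation by $g_{\beta_j}$ sends $\Supp(\iota_j(x_1))$, an interval reaching up to $b_j$, onto a subinterval of $[a_j,\frac{x_j+b_j}{2}]$, so the conjugate cannot equal $\iota_j(x_1)$ under any orientation-preserving identification $F\cong F_{[a_j,b_j]}$. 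Likewise the identity you single out as the main obstacle sits on the wrong factor: carrying out your computation honestly yields $\phi_j(\iota_j(x_1))=\iota_j(x_1^2x_2^{-1}x_1^{-1})$ (this word is the copy of $x_0$ in $F_{[10]}$, supported in $[\frac12,\frac34]$, exactly the image of the compressed half) and $\phi_j(\iota_{j+1}(x_1))=\iota_{j+1}(x_1)$; the $x_0$-images come out permuted and inverted correspondingly. In short, with your stated identifications the computation produces the displayed formulas with the roles of the $j$-th and $(j+1)$-st summands interchanged. To prove the lemma as literally stated you must either choose the isomorphisms $F\cong F_{[a_i,b_i]}$ (or the stable letters) differently and say explicitly how, or record that the indices in the displayed endomorphism need to be swapped; as written, your argument rests on support assertions that are false.
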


\begin{proof}
This follows from an analysis of the proof of Theorem \ref{th:m1} in the case when $U=U_2$. In the notations of the proof, the intervals $[a_i,b_i]$ for $i=1,\dots,m+1$ all have endpoints from $\zz$. Thus, the group $K_U=\prod_{i=1}^{m+1}F_{[a_i,b_i]}$ is isomorphic to the direct sum of $m+1$ copies of $F$. Each of the generators $g_{\beta_j}$ fixes the intervals ${[a_i,b_i]}$ for all $i\neq\{j,j+1\}$ and thus acts trivially on $F_{[a_i,b_i]}$. In addition, $g_{\beta_j}$ fixes the left half of $[a_j,b_j]$ and maps the right half of $[a_j,b_j]$ to its own left half. Similarly, $g_{\beta_j}$ fixes the right half of $[a_{j+1},b_{j+1}]$ and maps the left half of $[a_{j+1},b_{j+1}]$ to its right half. Using these facts and the isomorphism of $F_{[a_j,b_j]}$ with $F$ and $F_{[a_{j+1},b_{j+1}]}$ with $F$ one gets the above description of the automorphism $\phi_j$.
\end{proof}

Notice that Lemma \ref{HNN-ext} implies that if $U=U_2$, then $H_U$ has a generating set with $3m+2$ generators. In the following section we will improve this result and find the minimal number of generators.

\section{Stabilizers of finite sets of rational numbers}

Let $U\subset [0,1]$ be a finite set of rational numbers. In this section,  we are going to show that the rank of the first homology group of $H_U$ with integral coefficients (given by Theorem \ref{thm:semi}) coincides with the smallest number of generators of $H_U$.

We first consider the case where $U=U_2$. Recall (Section \ref{sec:copies}), that if $f\in F$ and $u$ is a finite binary word, then $f_{[u]}$ denotes the copy of $f$ in $F_{[u]}$. Recall also  that if $f$ consists of pairs of branches $v_i\to w_i$, then $f_{[u]}$ consists of pairs of branches $uv_i\to uw_i$ and some pairs of branches $p\to p$.  We will need the following three lemmas.

\begin{Lemma}\label{left}
Let $u$ be a finite binary word and let $g\in F$ be an element with a pair of branches  $u\to  u0$. Let $f\in F$ and consider the copy $f_{[u]}$ of $f$ in $F_{[u]}$. Then ${f_{[u]}}^g=(f\oplus \1)_{[u]}$. That is, conjugating the copy of $f$ in $F_{[u]}$ by $g$ gives the copy of $f\oplus \1$ in $F_{[u]}$.
\end{Lemma}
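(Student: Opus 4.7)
The plan is to combine a support-based reduction with a pairs-of-branches computation. I would show that both $f_{[u]}^g$ and $(f\oplus\1)_{[u]}$ are the identity outside the dyadic interval $[u0]$, and that they induce identical pairs of branches inside $[u0]$; these two facts together identify the elements.

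For the support step, recall from Section \ref{sec:copies} that if $h\in F$ has support in an interval $J$, then $h^g$ has support in $g(J)$. Since $f_{[u]}$ is supported in $[u]$ and the pair of branches $u\to u0$ of $g$ forces $g([u])=[u0]$, the conjugate $f_{[u]}^g$ is supported in $[u0]$. On the other hand $f\oplus\1$ is, by construction, the copy of $f$ in $F_{[0]}$, so its support lies in $[0]$; passing through the isomorphism $F\cong F_{[u]}$ gives that $(f\oplus\1)_{[u]}$ has support in $[u0]$ as well. Thus outside $[u0]$ both elements act trivially, and it remains to compare them on $[u0]$.

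For the comparison on $[u0]$, I would use pairs of branches. Let $v\to w$ be any pair of branches of $f$, so that $f_{[u]}$ has the pair $uv\to uw$. Refining the single pair $u\to u0$ of $g$ by extending the source branch $u$ to $uw$ (and correspondingly the target branch $u0$ to $u0w$), we see that $g$ has the pair $uw\to u0w$; similarly $g^{-1}$ has the pair $u0v\to uv$. Composing left to right, $g^{-1}\cdot f_{[u]}\cdot g$ sends $u0v$ to $uv$ (via $g^{-1}$), then to $uw$ (via $f_{[u]}$), then to $u0w$ (via $g$). Thus $f_{[u]}^g$ has pair of branches $u0v\to u0w$ for every pair $v\to w$ of $f$. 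On the other hand, $f\oplus\1$ has pairs $0v\to 0w$ (from $f$) together with the pair $1\to 1$, so $(f\oplus\1)_{[u]}$ has pairs $u0v\to u0w$ and $u1\to u1$. After passing to a common refinement the two sets of pairs of branches agree, which identifies the two elements.

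There is no serious obstacle here. The only care required is to respect the paper's left-to-right composition convention when expanding $f^g=g^{-1}fg$, and to notice that although $g$ is only partially described by the single pair of branches $u\to u0$, the support argument makes its other pairs irrelevant for the computation.
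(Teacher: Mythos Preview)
Your proof is correct and follows essentially the same approach as the paper's own proof: both reduce to showing that $f_{[u]}^g$ has support in $[u0]$ (via $g([u])=[u0]$) and then trace the composition $g^{-1}f_{[u]}g$ through pairs of branches to obtain $u0v\to u0w$ from each pair $v\to w$ of $f$. The only cosmetic difference is that the paper first records the identity $(f\oplus\1)_{[u]}=f_{[u0]}$ and then characterizes $f_{[u0]}$ by its support and branches, whereas you compute the branches of $(f\oplus\1)_{[u]}$ directly; the content is the same.
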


\begin{proof}
First, notice that $(f\oplus \1)_{[u]}=f_{[u0]}$. To prove that ${f_{[u]}}^g$ is equal to the copy of $f$ in $F_{[u0]}$ we will show that ${f_{[u]}}^g$ has support in the interval $[u0]$ and that for any pair of branches $v\to w$ of $f$, the element ${f_{[u]}}^g$ takes the branch $u0v$ to the branch $u0w$.

Since $f_{[u]}$ has support in $[u]$ and $g$ maps the interval $[u]$ onto $[u0]$, the conjugate
${f_{[u]}}^g$ has support in the interval $[u0]$. For any pair of branches $v\rightarrow w$ of $f$, the copy $f_{[u]}$ has a pair of branches $uv\rightarrow uw$. The pair of branches $u\rightarrow u0$ of $g$ imply that ${f_{[u]}}^g$ has a pair of branches $u0v\rightarrow u0w$ for any such $v$ and $w$. Indeed, $g^{-1}$ takes $u0v$ to $uv$, $f_{[u]}$ takes $uv$ to $uw$ and $g$ takes $uw$ to $u0w$.
\end{proof}

Similarly, we have the following.

\begin{Lemma}\label{right}
Let $u$ be a finite binary word and let $g\in F$ be an element with a pair of branches $u\to  u1$. Let $f\in F$ and consider the copy $f_{[u]}$ of $f$ in $F_{[u]}$. Then ${f_{[u]}}^g=(\1\oplus f)_{[u]}$. That is, conjugating the copy of $f$ in $F_{[u]}$ by $g$ gives the copy of $\1\oplus f$ in $F_{[u]}$.\qed
\end{Lemma}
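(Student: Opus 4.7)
My plan is to mimic the proof of Lemma \ref{left} almost verbatim, replacing the digit $0$ with $1$ throughout. First I would observe that, by the definition of the sum operation and of the copy $f_{[u]}$, we have the identification $(\1\oplus f)_{[u]}=f_{[u1]}$: indeed, $\1\oplus f$ is the copy of $f$ in $F_{[1]}$, and taking the copy of that inside $F_{[u]}$ amounts to prepending $u$ to the index, giving $f_{[u1]}$. So the task reduces to showing that ${f_{[u]}}^g$ equals $f_{[u1]}$.

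Next I would verify the two defining properties of $f_{[u1]}$ for the element ${f_{[u]}}^g$, namely that it has support in $[u1]$ and that for every pair of branches $v\to w$ of $f$ it takes $u1v$ to $u1w$. For the support, since $f_{[u]}$ has support in $[u]$ and $g$ maps $[u]$ onto $[u1]$ (because $g$ has a pair of branches $u\to u1$, so it maps $[u]$ linearly to $[u1]$), conjugation yields $\Supp({f_{[u]}}^g)=g(\Supp(f_{[u]}))\subseteq [u1]$. For the branch behavior, given a pair of branches $v\to w$ of $f$, the copy $f_{[u]}$ has the pair of branches $uv\to uw$; then $g\iv$ takes $u1v$ to $uv$, next $f_{[u]}$ takes $uv$ to $uw$, and finally $g$ takes $uw$ to $u1w$. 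Chaining these three actions gives that ${f_{[u]}}^g$ maps the branch $u1v$ to the branch $u1w$, which is exactly the pair of branches defining $f_{[u1]}$ on the corresponding intervals.

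There is no real obstacle here; the proof is a direct symmetric analog of Lemma \ref{left}, and I would simply note this and refer the reader back to that argument with the substitution $0\leftrightarrow 1$. The only point worth mentioning explicitly is the identification $(\1\oplus f)_{[u]}=f_{[u1]}$, which follows immediately from the tree-diagram description of the copy operation in Section \ref{sec:copies} combined with the definition $\1\oplus f=f_{[1]}$.
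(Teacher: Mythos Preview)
Your proposal is correct and is exactly what the paper intends: the lemma is stated with a \qed{} after ``Similarly, we have the following,'' so the paper's own proof is precisely the symmetric argument you spell out, replacing $0$ by $1$ in the proof of Lemma~\ref{left}. The identification $(\1\oplus f)_{[u]}=f_{[u1]}$ and the branch-chasing $u1v\mapsto uv\mapsto uw\mapsto u1w$ are just the right-hand analogs of the corresponding steps there.
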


\begin{Lemma}\label{x0}
Let $u$ be a finite binary word. Let $h_{\ell}$ and $h_r$ be functions from $F$ with supports disjoint from $[u]$. Let $g_{\ell}$ be a function in $F$ which takes the branch $u$ to $u0$ and fixes all points in the support of $h_{\ell}$. Similarly, let $g_r\in F$ be a function which takes the branch $u$ to $u1$ and fixes all points in the support of $h_r$.
Let $f_{\ell}=h_{\ell}(x_0)_{[u]}$ and $f_r=h_r(x_0)_{[u]}$. Then
$ (x_0)_{[u]}$ belongs to the subgroup $\la  f_{\ell},f_r, g_{\ell},g_r  \ra  .$
\end{Lemma}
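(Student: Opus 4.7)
\medskip
\noindent\textbf{Proof plan.}
My plan is to first use conjugation by $g_\ell$ to produce an element $A$ of the subgroup $H:=\la f_\ell,f_r,g_\ell,g_r\ra$ that lies in $F_{[u]}$, then symmetrically produce $B$, and finally recover $(x_0)_{[u]}$ from these by an algebraic manipulation inside $F_{[u]}$.

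For the first step, the crucial point is the hypothesis that $g_\ell$ fixes the support of $h_\ell$ pointwise, which means $g_\ell$ commutes with $h_\ell$. Combining this with Lemma \ref{left}, which gives $((x_0)_{[u]})^{g_\ell}=(x_0\oplus\1)_{[u]}$, we get $f_\ell^{g_\ell}=h_\ell\cdot(x_0\oplus\1)_{[u]}$. Multiplying by $f_\ell^{-1}$ and using that $h_\ell$ commutes with every element of $F_{[u]}$ (since their supports are disjoint), the $h_\ell$-factors cancel, and we obtain
\[A:=f_\ell^{g_\ell}f_\ell^{-1}=(x_0\oplus\1)_{[u]}(x_0)_{[u]}^{-1}\in F_{[u]}\cap H.\]
A symmetric argument using $g_r$ and Lemma \ref{right} produces
\[B:=f_r^{g_r}f_r^{-1}=(\1\oplus x_0)_{[u]}(x_0)_{[u]}^{-1}\in F_{[u]}\cap H.\]

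The same commutativity trick shows that conjugation by $f_r=h_r(x_0)_{[u]}$ acts on elements of $F_{[u]}$ exactly as conjugation by $(x_0)_{[u]}$, which yields two further elements
\[A^{f_r}=(x_0)_{[u]}^{-1}(x_0\oplus\1)_{[u]},\qquad B^{f_\ell}=(x_0)_{[u]}^{-1}(\1\oplus x_0)_{[u]}\]
in $F_{[u]}\cap H$. Because $(x_0\oplus\1)_{[u]}$ and $(\1\oplus x_0)_{[u]}$ have disjoint supports in $[u0]$ and $[u1]$ and therefore commute, from these one extracts for instance $AB^{-1}=(x_0\oplus x_0^{-1})_{[u]}$ and $(A^{f_r})^{-1}B^{f_\ell}=(x_0^{-1}\oplus x_0)_{[u]}$ in $F_{[u]}\cap H$.

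The final step is to show that $(x_0)_{[u]}$ lies in the subgroup of $F_{[u]}$ generated by the four elements produced above. Under the natural isomorphism $F_{[u]}\cong F$ this amounts to the purely algebraic claim that $x_0$ can be written as a word in $(x_0\oplus\1)x_0^{-1}$, $x_1x_0^{-1}$, $x_0^{-1}(x_0\oplus\1)$ and $x_0^{-1}x_1$. One verifies this by direct computation using Thompson's relations (notably $x_1^{x_0}=x_2$) and the commutativity of the left- and right-half copies of $x_0$ in $F$. This last algebraic manipulation is the main obstacle of the proof, requiring careful bookkeeping, but all the ingredients have already been assembled in the subgroup $H$ by the first two steps.
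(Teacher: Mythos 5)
Your argument is correct and takes a genuinely different route from the paper's. The paper exploits the identity $x_0\oplus\1=x_0^2x_1^{-1}x_0^{-1}$ together with an exponent-sum trick: in the product $f_r^2(f_r^{g_r})^{-1}f_r^{-1}$ the total exponent of $f_r$ is zero, so the $h_r$-factors cancel and the product equals $(x_0^2x_1^{-1}x_0^{-1})_{[u]}=(x_0\oplus\1)_{[u]}$; from $f_\ell^{g_\ell}=h_\ell(x_0\oplus\1)_{[u]}$ one then recovers $h_\ell$ and hence $(x_0)_{[u]}=h_\ell^{-1}f_\ell$. You instead kill the $h$'s at the outset by forming $f^{g}f^{-1}$, which lands you in $F_{[u]}\cap\la f_\ell,f_r,g_\ell,g_r\ra$ immediately, and you reduce the lemma to a word computation inside a copy of $F$. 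Two comments on the endgame. First, the two elements you extract, $AB^{-1}=(x_0\oplus x_0^{-1})_{[u]}$ and $(A^{f_r})^{-1}B^{f_\ell}=(x_0^{-1}\oplus x_0)_{[u]}$, are mutually inverse, so they carry no independent information and should not be the basis of the final step. Second, the ``direct computation'' you defer is the entire remaining content of the proof and has to be exhibited; as written this is a gap in the write-up, though not in the idea, because the claim is true and in fact far easier than you anticipate --- it needs no Thompson relations at all, only the identity $x_0\oplus\1=x_0^2x_1^{-1}x_0^{-1}$ (which you already use via Lemma \ref{left}) and free cancellation. Concretely, identifying $F_{[u]}$ with $F$, your elements are $B^{-1}=x_0x_1^{-1}$, $(A^{f_r})^{-1}=(x_0\oplus\1)^{-1}x_0=x_0x_1x_0^{-1}$ and $B^{f_\ell}=x_0^{-1}x_1$, and
\[
B^{-1}\,(A^{f_r})^{-1}\,B^{-1}\,B^{f_\ell}
=(x_0x_1^{-1})(x_0x_1x_0^{-1})(x_0x_1^{-1})(x_0^{-1}x_1)=x_0 ,
\]
so $(x_0)_{[u]}=B^{-1}(A^{f_r})^{-1}B^{-1}B^{f_\ell}\in\la f_\ell,f_r,g_\ell,g_r\ra$. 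With this line added, your proof is complete.
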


\begin{proof}
Let $G=\la  f_{\ell},f_r, g_{\ell},g_r  \ra $.
We consider the conjugate of $f_{\ell}$ by $g_{\ell}$.
Since $g_{\ell}$ fixes all points in the support of $h_{\ell}$, the functions $g_{\ell}$ and $h_{\ell}$ commute. By Lemma \ref{left} ${(x_0)_{[u]}}^{g_{\ell}}=(x_0\oplus \1)_{[u]}$. Thus, ${f_{\ell}}^{g_{\ell}}=h_{\ell}(x_0\oplus \1)_{[u]}\in G$.
Similarly, using Lemma \ref{right} we get that ${f_r}^{g_r}=h_{r}(\1\oplus x_0)_{[u]}=h_{r}(x_1)_{[u]}$.
Note that $x_0\oplus \1=x_0^2x_1^{-1}x_0^{-1}$. Since $[u]$ is disjoint from the support of $h_r$, $(x_0)_{[u]}$ and $(x_1)_{[u]}$ commute with $h_r$.
Thus we have that ${f_r}^2({f_r}^{g_r})^{-1}f_r^{-1}=(x_0^2x_1^{-1}x_0^{-1})_{[u]}=(x_0\oplus \1)_{[u]}\in G$. That implies that $h_{\ell}\in G$ and, in turn, that $(x_0)_{[u]}\in G$.
\end{proof}

\begin{Theorem}\label{thm:rational}
Let $U=U_2$. Then $H_U$ has a generating set with $|U|+2$ elements (in fact a generating set of this size can be explicitly described).
\end{Theorem}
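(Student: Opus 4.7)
The goal is to exhibit $m+2 = |U|+2$ elements of $H_U$ that generate the entire group, where $m = |U_2|$. Following the notation of the proof of Theorem \ref{th:m1} specialized to $U = U_2 = \{\beta_1,\ldots,\beta_m\}$ with $\beta_j = .p_j s_j^\N$, let $I_i = [a_i,b_i]$ ($i=1,\ldots,m+1$) be the dyadic intervals obtained by removing the intervals $[p_{\beta_j}]$ from $[0,1]$, and let $g_j = g_{\beta_j}$ be the stable letters supported in $[x_j,x_{j+1}]$. By Lemma \ref{HNN-ext}, $H_U$ is generated by $K_U = F_{I_1}\times\cdots\times F_{I_{m+1}}$ together with $g_1,\ldots,g_m$, so the obvious generating set has $3m+2$ elements; the task is to compress this to $m+2$.

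The central tool is Lemma \ref{x0}, which extracts $(x_0)_{[u]}$ from two ``bundled'' elements $h_\ell(x_0)_{[u]}$ and $h_r(x_0)_{[u]}$ together with halving maps sending the branch $u$ to $u0$ and $u1$. The crucial observation is that each stable letter $g_j$ already carries both halving roles on suitable dyadic sub-intervals of its own support: by construction $g_j$ halves $[x_j,b_j]$ to its left half (providing a $g_\ell$ for $u\subset I_j$) and simultaneously halves $[a_{j+1},x_{j+1}]$ to its right half (providing a $g_r$ for $u\subset I_{j+1}$). Hence the $m$ stable letters together supply all the halving data we need throughout every $I_i$, provided we can produce two suitably bundled generators.

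I would therefore propose the generating set $\{A,\ B,\ g_1,\ldots,g_m\}$, where $A, B\in K_U$ are two elements whose $F_{I_i}$-components are copies of $x_0$ and $x_1$ respectively---for example $A = \prod_{i=1}^{m+1}(x_0)_{I_i}$ and $B = \prod_{i=1}^{m+1}(x_1)_{I_i}$, perhaps with the factors restricted to carefully chosen dyadic sub-intervals aligned with the $g_j$'s. To verify that this $m+2$-element set generates $H_U$, one iteratively extracts $(x_0)_{I_i}$ and $(x_1)_{I_i}$ for each $i$: conjugation by $g_j$ modifies only the $I_j$- and $I_{j+1}$-components of $A$ (since $g_j$ is trivial on the other intervals), so $A$ and $A^{g_j}$ constitute two different bundlings in which the $I_j$-component plays the role of $(x_0)_{[u]}$ for an appropriate dyadic $u\subset I_j$; Lemma \ref{x0}, run with various $g_k$ as halving maps, then isolates $(x_0)_{[u]}$. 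Lemmas \ref{left}, \ref{right}, and \ref{xy} are then used to transport and glue the extracted elements into the full $F_{I_i}$. The parallel argument with $B$ in place of $A$ recovers the $(x_1)_{I_i}$'s.

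The principal obstacle is the combinatorial bookkeeping: every application of Lemma \ref{x0} requires the bundled ``extras'' $h_\ell,h_r$ to have support disjoint from the specific $[u]$ being extracted and to be fixed by the chosen halving maps. Ensuring this may force us to replace the naive product-form for $A$ and $B$ with more carefully tailored elements whose factors occupy dyadic sub-intervals chosen to respect the halving structure of the $g_j$. In addition, the endpoint intervals $I_1$ and $I_{m+1}$ each have only one adjacent stable letter, so extracting generators for $F_{I_1}$ and $F_{I_{m+1}}$ requires a somewhat separate extraction argument, exploiting the fact that they extend to include the boundary point $0$ or $1$.
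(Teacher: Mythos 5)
Your overall strategy is the paper's: reduce to $K_U=\prod_i F_{I_i}$ plus the $m$ stable letters, bundle the generators of the factors $F_{I_i}$ into two elements, and unbundle them using Lemma \ref{x0} together with the halving behaviour of the $g_j$'s. But the specific pair you propose does not work, and the hedge ``restricted to carefully chosen dyadic sub-intervals'' does not supply the missing idea. With $A=\prod_{i}(x_0)_{I_i}$, extracting the $I_j$-component via Lemma \ref{x0} requires halving maps $g_{\ell},g_r$ that send the branch of $I_j$ to its left (resp.\ right) child \emph{while fixing the support of the extras} $h_\ell=h_r=\prod_{i\neq j}(x_0)_{I_i}$. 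The only available halving maps for $I_j$ are $g_j$ and $g_{j-1}$, and each of these acts nontrivially on one of the adjacent intervals $I_{j+1}$, $I_{j-1}$, which carry factors of $h_\ell$; no word in the $g_k$'s repairs this, since each $g_k$ touches two consecutive intervals. Your alternative of feeding $A$ and $A^{g_j}$ into the lemma also does not fit its hypotheses: the $I_j$-component of $A^{g_j}$ is $(x_0\oplus\1)_{I_j}$, a copy of $x_0$ on the \emph{left half} of $I_j$, so the two bundles do not share a common payload $(x_0)_{[u]}$.

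The missing idea is a \emph{parity split}, not a split by generator type. The paper's two bundled elements $x$ and $y$ are both products of $x_0$-copies, but over alternating odd-indexed intervals ($[v_{4i-3}]$ for $x$, $[v_{4i-1}]$ for $y$); then the extras accompanying any payload sit at least two intervals away from it, hence are fixed by the two relevant stable letters $g_{2k-1},g_{2k-2}$, and Lemma \ref{x0} applies with $f_\ell=f_r=x$ (or $y$). No $x_1$-bundle $B$ is needed at all: once $(x_0)_{[v_{2j-1}]}$ is extracted, conjugating it by $g_j$ (resp.\ $g_m$ for the last interval) yields $(x_0\oplus\1)_{[v_{2j-1}]}$ (resp.\ $(x_1)_{[v_{2m+1}]}$) by Lemmas \ref{left} and \ref{right}, and each such pair generates the whole factor $F_{[v_{2j-1}]}$. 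Finally, the endpoint intervals require no separate extraction: after dividing out the extracted factors, what remains of $x$ (resp.\ $y$) is exactly $(x_0)_{[v_1]}$ (resp.\ $(x_0)_{[v_{2m+1}]}$). As written, your argument has a genuine gap precisely at the step that achieves the count $m+2$.
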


\begin{proof}Let $U=U_2=\{\alpha_1,...,\alpha_m\}$.
We start as in the proof of Theorem \ref{th:m1} by constructing a subgroup $K_U$ and elements $g_1,\dots,g_m$ such that $K_U$ and $g_1,\dots,g_m$ generate $H_U$. As above, $K_U$ will be the direct product of groups of the form $F_{[a_i,b_i]}$. It will be convenient to assume that $[a_i,b_i]$ are dyadic intervals.

For $j=1,\dots,m$, let $\alpha_j=.p_js_j^\N$ where $s_j$ is a minimal period. We assume that the prefixes $p_j$ are long enough so that the intervals $[p_1],\dots,[p_m]$ are pairwise disjoint and such that $0\notin [p_1]$ and $1\notin [p_m]$.

Let $T$ be a finite binary tree with $2m+1$ leaves represented by binary words $v_1\dots,v_{2m+1}$. The $m$ intervals associated with the even numbered branches; that is, $[v_2],[v_4],\dots,[v_{2m}]$, are pairwise disjoint, $0\notin [v_2]$ and $1\notin [v_{2m}]$.
Thus by Lemma \ref{choice} there exists an element $f\in F$ with pairs of branches $p_j\rightarrow v_{2j}$ for $j=1,\dots,m$.
Conjugating $H_U$ by $f$ results in a group $H_V$ where $V=\{\beta_1,\dots,\beta_m\}$ and for each $j=1,\dots,m$, $\beta_m=v_{2j}s_j^\N$. Clearly, it suffices to prove that $H_V$ is $m+2$ generated. For the rest of the proof, we rename the set $V$ and its elements by $U$ and $\alpha_1,\dots,\alpha_m$, respectively.

Notice that if one removes from $I=[0,1]$ the endpoints $0,1$ and the intervals $[v_2],\dots,[v_{2m}]$, one remains with a union of $m+1$ open intervals $(a_i,b_i)$ for $i=1,\dots,m+1$. It is obvious, that $[a_i,b_i]=[v_{2i-1}]$. Thus, we define the subgroup
$$K_U=\prod_{i=1}^{m+1}F_{[v_{2i-1}]}.$$

Next, we choose elements $g_1,\dots,g_m\in H_U$. Unlike in the proof of Theorem \ref{th:m1}, we do not require that the elements commute.  For each $j=1,\dots,m$ we let $g_j$ be an element with the following pairs of branches.
\[
  \begin{cases}
	v_{2j-1} &\rightarrow v_{2j-1}0\\
	v_{2j}s_j &\rightarrow v_{2j}\\
	v_{2j+1} &\rightarrow v_{2j+1}1\\
	v_k &\rightarrow v_k, \mbox{ for all $k\in\{1,\dots,2m+1\}\setminus\{2j-1,2j,2j+1\}$ }
  \end{cases}
\]
Note that the existence of an element $g_j$ with the required pairs of branches follows from Lemma \ref{choice}.

\begin{Lemma}\label{KU}
The group $H_U$ is generated by the subgroup $K_U$ and the elements $g_1,\dots,g_m$.
\end{Lemma}

\begin{proof}
The proof is identical to the proof of Lemma \ref{generate}. Indeed, the only conditions in the definition of the elements $g_1,\dots,g_m$ necessary for the proof are that $g_j$ has a pair of branches $v_{2j}s_j\rightarrow v_{2j}$ and that the support of $g_j$ is disjoint from the interval $[v_{2k}]$ for all $k\neq j$. Then given an element $h\in H_U$, one can multiply $h$ from the left and from the right by powers of $g_1,\dots,g_m$ to get an element $h'\in K_U$.
\end{proof}

\begin{Lemma}\label{odd}
The group $H_U$ is generated by the set
 $$S=\{(x_0)_{[v_{1}]},(x_0)_{[v_{3}]},\dots,(x_0)_{[v_{2m-1}]},(x_0)_{[v_{2m+1}]},g_1,\dots,g_m\}.$$
\end{Lemma}

\begin{proof}
By Lemma \ref{KU} and the definition of $K_U$, it suffices to prove that for all $j=1,\dots,m+1$, the subgroup $\la S \ra$ contains the subgroup $F_{[v_{2j-1}]}$.
For each $j=1,\dots,m$, the element $g_j$ has a pair of branches $v_{2j-1}\rightarrow v_{2j-1}0$. Thus, by Lemma \ref{left}, $${(x_0)_{[v_{2j-1}]}}^{g_j}=(x_0\oplus \1)_{[v_{2j-1}]}.$$ Since $(x_0)_{[v_{2j-1}]}$ and $(x_0\oplus \1)_{[v_{2j-1}]}$ generate $F_{[v_{2j-1}]}$, we have the inclusion $F_{[v_{2j-1}]}\subseteq\la S\ra$.
For $j=m+1$, we note that $g_{m}$ has a pair of branches $v_{2m+1}\rightarrow v_{2m+1}1$. Thus, by Lemma \ref{right},
$${(x_0)_{[v_{2m+1}]}}^{g_m}=(\1\oplus x_0)_{[v_{2m+1}]}=(x_1)_{[v_{2m+1}]}.$$
Since $(x_0)_{[v_{2m+1}]}$ and $(x_1)_{[v_{2m+1}]}$ generate $F_{[v_{2m+1}]}$, it is also contained in $\la S\ra$.
\end{proof}

To prove that $H_U$ is $(m+2)$-generated, we choose two elements $x$ and $y$ in $K_U$ as follows.
If $m$ is odd, we let
$$x=\prod_{i=1}^{\mathclap{\frac{m+1}{2}}} (x_0)_{[v_{4i-3}]}\ \ \  \mbox{ and }\ \ \  y=\prod_{i=1}^{\mathclap{\frac{m+1}{2}}} (x_0)_{[v_{4i-1}]}.$$
If $m$ is even we let
$$x=\left[\prod_{i=1}^{\frac{m}{2}} (x_0)_{[v_{4i-3}]}\right](x_0)_{[v_{2m-1}]}\ \ \  \mbox{  and     }\ \ \ y=\left[\prod_{i=1}^{\mathclap{\frac{m}{2}}} (x_0)_{[v_{4i-1}]}\right](x_0)_{[v_{2m+1}]}.$$
Notice that in both cases, all elements appearing in the product defining $x$ have disjoint support, thus all elements in the product commute. The same is true for the product defining $y$.

We claim that $H_U$ is generated by $x,y,g_1,\dots g_m$. By Lemma \ref{odd}, it suffices to prove the following.

\begin{Lemma}
The group $H=\la x,y,g_1,\dots,g_m\ra$ contains the elements $(x_0)_{[v_{2j-1}]}$ for $j=1,\dots,m+1$.
\end{Lemma}

\begin{proof}
We first consider the case where $m$ is odd.
Recall that in that case, $$x=\prod_{i=1}^{\mathclap{\frac{m+1}{2}}} (x_0)_{[v_{4i-3}]}.$$
For each $k\in\{2,\dots,\frac{m+1}{2}\}$ we apply Lemma \ref{x0} with $u\equiv v_{4k-3}$,
$$h_{\ell}=h_r=\prod_{{\substack{i=1 \\ i\neq k}}}^{{\frac{m+1}{2}}} (x_0)_{[v_{4i-3}]},$$
$g_{\ell}=g_{2k-1}$ and $g_r=g_{2k-2}$.
Note, that all the conditions of Lemma \ref{x0} are satisfied. Indeed, the support of $h_{\ell}=h_r$ is disjoint from $[u]=[v_{4k-3}]$, $g_{\ell}=g_{2k-1}$ fixes all intervals $[v_i]$ for $i\notin \{4k-3,4k-2,4k-1\}$ and in particular fixes the support of $h_{\ell}$. Similarly, $g_r$ fixes the support of $h_r$. By construction, $g_{\ell}=g_{2k-1}$ takes $u\equiv v_{4k-3}$ to $u0$ and $g_r$ takes $u$ to $u1$. Note that $f_{\ell}$ and $f_r$ from the lemma are both equal to $x$. Thus, by lemma \ref{x0},
 $$(x_0)_{[v_{4k-3}]}\in \la f_{\ell},f_{r},g_{\ell},g_r\ra=\la x,g_{2k-1},g_{2k-2}\ra\subseteq \la x,g_1,\dots,g_m\ra.$$

If one multiplies $x$ by the inverses of $(x_0)_{[v_{4k-3}]}$ for $k=2,\dots,\frac{m+1}{2}$, one remains with $(x_0)_{[v_{1}]}$. Thus, $(x_0)_{[v_{1}]}\in \la x,g_1,\dots,g_m\ra$ as well.

Next, one should consider the element $$y=\prod_{i=1}^{\mathclap{\frac{m+1}{2}}} (x_0)_{[v_{4i-1}]}.$$

If one applies the same arguments as above, one gets that for all $k\in\{1,\dots,\frac{m+1}{2}\}$, the element $(x_0)_{[v_{4k-1}]}$ is in $\la y,g_1,\dots,g_m\ra$. Combining, we get that for all $j\in\{1,\dots,m+1\}$, the element $(x_0)_{[v_{2j-1}]}\in H$  as necessary.\\

The proof for $m$ even is very similar. Recall that in that case, we have
$$x=\left[\prod_{i=1}^{\frac{m}{2}} (x_0)_{[v_{4i-3}]}\right](x_0)_{[v_{2m-1}]}\ \  \hbox{  and   }\ \   y=\left[\prod_{i=1}^{{\frac{m}{2}}} (x_0)_{[v_{4i-1}]}\right](x_0)_{[v_{2m+1}]}.$$
We apply Lemma \ref{x0} with $u\equiv v_{2m-1}$, $$h_{\ell}=\prod_{i=1}^{\frac{m}{2}} (x_0)_{[v_{4i-3}]}, \ \
h_r=\left[\prod_{i=1}^{{\frac{m-2}{2}}} (x_0)_{[v_{4i-1}]}\right](x_0)_{[v_{2m+1}]},$$ $g_{\ell}=g_m$ and $g_r=g_{m-1}$. One can check that all the conditions in Lemma \ref{x0} are satisfied. Notice, in addition, that $f_{\ell}$ from the lemma is equal to $x$ and $f_r=y$. Thus, Lemma \ref{x0} implies that 
$(x_0)_{[v_{2m-1}]}\in \la f_{\ell},f_r,g_{\ell},g_r\ra\subseteq H.$
Multiplying $x$ and $y$ by $((x_0)_{[v_{2m-1}]})^{-1}$, results in elements
$$x'=\prod_{i=1}^{\frac{m}{2}} (x_0)_{[v_{4i-3}]}\ \  \hbox{  and  }\ \    y'=\left[\prod_{i=1}^{{\frac{m-2}{2}}} (x_0)_{[v_{4i-1}]}\right](x_0)_{[v_{2m+1}]}.$$
Proceeding as in the case where $m$ is odd,  for each $k=2,\dots,\frac{m}{2}$
one can apply Lemma \ref{x0} with $u\equiv v_{4k-3}$, $h_{\ell}=h_{r}=x'((x_0)_{[4k-3]})^{-1}$, $g_{\ell}=g_{2k-1}$ and $g_r=g_{2k-2}$. As a result, one gets that the elements $ (x_0)_{[v_{4k-3}]}\in \la x',g_1,\dots,g_m\ra $ for $k=2,\dots,\frac{m}{2}$. Multiplying $x'$ by the inverses of $(x_0)_{[v_{4k-3}]}$ for $k=2,\dots,\frac{m}{2}$ shows that $({x_0})_{[v_1]}\in \la x',g_1,\dots,g_m\ra$.
In a similar way, one shows that $(x_0)_{[v_{4k-1}]}\in \la y',g_1,\dots,g_m\ra$ for $k=1,\dots,\frac{m-2}{2}$,
and as such so does $(x_0)_{[v_{2m+1}]}$. All together, we have that for all $j\in\{1,\dots,m+1\}$, the element $(x_0)_{[v_{2j-1}]}\in H$, which completes the proof of the lemma.
\end{proof}
The proof of Theorem \ref{thm:rational} is complete.
\end{proof}

\begin{Remark} \label{r:u} Suppose now that $U_1$ is not empty (but $U_3=\emptyset$). If $|U_1|=k$, then the numbers from $U_1$ separate $U_2$ into a disjoint union $U_{2,1}\cup ...\cup U_{2,k+1}$ of subsets (some of which might be empty). By the results of Section \ref{sec:alg}, $H_U$ is isomorphic to the direct product of subgroups $H_{U_{2,i}}$. Note that if $U_{2,i}=\emptyset$ then $H_{U_{2,i}}\cong F$.
\end{Remark}

 This allows us to compute the presentation of $H_U$ and, in particular, the minimal number of generators of that subgroup.

\begin{Theorem}\label{t:rat}
Let $U$ be a finite set of rational numbers in $(0,1)$, $U=U_1\cup U_2$ be its natural partition.  Then the smallest number of generators of $H_U$ is $2|U_1|+|U_2|+2$.
\end{Theorem}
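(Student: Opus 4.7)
The plan is to establish matching upper and lower bounds of $2m_1 + m_2 + 2$, where $m_i = |U_i|$.

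For the lower bound, I would invoke Theorem \ref{thm:semi}: $H_U \cong [F,F]^{r+1} \rtimes \mathbb{Z}^{2m_1+m_2+2}$. Since $[F,F]$ is a perfect group (indeed simple), the derived subgroup $[H_U, H_U]$ contains $[F,F]^{r+1}$, so the abelianization of $H_U$ is precisely $\mathbb{Z}^{2m_1+m_2+2}$. Consequently, any generating set of $H_U$ must project onto a generating set of this free abelian group, and thus must contain at least $2m_1 + m_2 + 2$ elements. (This is exactly the remark already made in Theorem \ref{thm:semi}.)

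For the upper bound, I would use the direct product decomposition from Section \ref{sec:alg} together with Remark \ref{r:u}. Let $U_1 = \{\alpha_{i_1} < \cdots < \alpha_{i_{m_1}}\}$, and set $\alpha_{i_0} = 0$, $\alpha_{i_{m_1+1}} = 1$. The points of $U_1$ partition $U_2$ into $m_1 + 1$ consecutive subsets $U_{2,0}, \dots, U_{2,m_1}$ (possibly empty), where $U_{2,j}$ consists of the points of $U_2$ strictly between $\alpha_{i_j}$ and $\alpha_{i_{j+1}}$. By Remark \ref{r:u} and the formula displayed at the start of Section \ref{sec:alg},
\[
H_U \;\cong\; \prod_{j=0}^{m_1} \Bigl( F_{[\alpha_{i_j},\alpha_{i_{j+1}}]} \cap H_{U_{2,j}} \Bigr),
\]
where each factor with $U_{2,j} = \emptyset$ is isomorphic to $F$ (and hence $2$-generated), and each factor with $U_{2,j} \ne \emptyset$ is isomorphic (via conjugation in $\pl_2(\mathbb{R})$ sending $[\alpha_{i_j}, \alpha_{i_{j+1}}]$ to $[0,1]$) to a group $H_V$ where $V$ is a set of $|U_{2,j}|$ rational numbers with natural partition $V = V_2$. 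By Theorem \ref{thm:rational}, such $H_V$ is generated by $|U_{2,j}| + 2$ elements.

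Taking the union of the chosen generating sets of the factors (viewed inside $H_U$ via the direct product embedding) yields a generating set of $H_U$ of size
\[
\sum_{j=0}^{m_1} \bigl(|U_{2,j}| + 2\bigr) \;=\; m_2 + 2(m_1 + 1) \;=\; 2m_1 + m_2 + 2,
\]
matching the lower bound. The main (and only mild) technical point is verifying that the factors in Remark \ref{r:u} really are pairwise commuting subgroups of $H_U$ whose generators combine to generate $H_U$; this is essentially automatic from the disjointness of their supports and from the fact, already proved in Section \ref{sec:alg}, that the formula genuinely identifies $H_U$ with the stated direct product.
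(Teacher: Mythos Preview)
Your proof is correct and takes essentially the same approach as the paper: the lower bound via the abelianization from Theorem~\ref{thm:semi}, and the upper bound via the direct product decomposition of Remark~\ref{r:u} combined with Theorem~\ref{thm:rational}. The only cosmetic difference is that the paper phrases the upper bound as an induction on $|U_1|$, peeling off one factor at a time, whereas you apply the full decomposition at once and sum the generator counts directly.
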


\begin{proof} The fact that the smallest number of generators of $H_U$ cannot be smaller than $2|U_1|+|U_2|+2$ follows from Theorem \ref{thm:semi}. Let us prove the upper bound.
The proof is by induction on the number $n=|U_1|$. If  $n=0$, the result is Theorem \ref{thm:rational}. Assume that $n>0$ and $m=|U_2|$. Let $\alpha$ be the smallest number in $U_1$ and assume that there are $c\ge 0$ numbers from $U_2$ smaller than $\alpha$. From Remark \ref{r:u} it follows that $H_U$  is isomorphic to the direct product of $H_V$ and $H_W$, where $V=V_2$, $|V|=c$, $|W_1|=n-1, |W_2|=m-c$. Thus from the induction hypothesis we have that $H_V$ is generated by $c+2$ elements and $H_W$ is generated by $2(n-1)+m-c+2$ elements. Hence the direct product $H_U\cong H_V\times H_W$ is generated by $2n+m+2$ elements.
\end{proof}

\section{Finitely generated subgroups $H_U$ are undistorted} Recall that if $G$ is a group generated by a finite set $S$ and $H$ is a subgroup of $G$ generated by a finite set $T$, then the distortion function $\delta_{S,T}$ is the smallest function $\N\to \N$ such that if an element $h\in H$ is a product of $n$ elements of $S$, then it is a product of at most $f(n)$ elements of $T$. For fixed $G, H$ but different (finite) $S, T$, the functions $\delta_{S,T}$ are equivalent\footnote{Two functions $f,g\colon \N\to \N$ are called \emph{equivalent} if for some $c>1$, $\frac 1c f(\frac nc)-c \le g(n)\le cf(cn)+c$ for every $n\in \N$.}.  The subgroup $H$ is called \emph{undistorted} in $G$ if the distortion function is linear. 
Although many subgroups of the Thompson group $F$ are known to be undistorted (see, for example, \cite{GuSa97,Burillo, GuSa99, WC}), $F$ has distorted subgroups \cite{GuSa99, DO}.

\begin{Theorem} \label{t:dist} Let $U$ be a finite set of rational numbers in $(0,1)$. Then the subgroup $H_U$ is undistorted in $F$.
\end{Theorem}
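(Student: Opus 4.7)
The plan is to first reduce to the case $U=U_2$ (i.e., $U$ consists entirely of rationals not in $\zz$) and then to exploit the explicit generating set from the proof of Theorem \ref{thm:rational}. By Remark \ref{r:u}, when $U_1\neq\emptyset$ the group $H_U$ splits as a direct product of subgroups $H_{U_{2,i}}$, each supported on a dyadic interval $[\beta_i,\beta_{i+1}]$ with $\beta_i,\beta_{i+1}\in U_1\cup\{0,1\}$. By Burillo \cite{Burillo}, each $F_{[\beta_i,\beta_{i+1}]}\cong F$ is undistorted in $F$, and a direct product of subgroups supported on pairwise disjoint dyadic intervals is itself undistorted in $F$: the reduced tree-diagram of a product element $f=f_1\cdots f_s$ splits above a fixed skeleton tree into independent sub-diagrams, and the standard equivalence of word length in $F$ with the number of carets (see \cite{GuSa97,Burillo}) yields $|f|_F\asymp\sum_i|f_i|_F$. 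Since undistortion is transitive, it suffices to prove that $H_W$ is undistorted in the ambient copy $F_{[\beta,\beta']}\cong F$ whenever $W$ consists of non-dyadic rationals, so we may assume $U=U_2\subset(0,1)$.

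Assume then $U=U_2=\{\alpha_1,\dots,\alpha_m\}$ with $\alpha_j=.p_js_j^{\N}$ and $s_j$ a minimal period. After conjugating $H_U$ by a suitable element of $F$ (harmless for distortion), as in the proof of Theorem \ref{thm:rational} we may assume $\alpha_j=.v_{2j}s_j^{\N}$ where $v_1,\dots,v_{2m+1}$ are the branches of a fixed finite binary tree. Let $K_U=\prod_{i=1}^{m+1}F_{[v_{2i-1}]}$ and let $g_1,\dots,g_m\in H_U$ be the slope-adjusters of that proof, so $g_j$ has the pair of branches $v_{2j}s_j\to v_{2j}$, has support in $[v_{2j-1}]\cup[v_{2j}]\cup[v_{2j+1}]$, and $H_U=\la K_U,g_1,\dots,g_m\ra$. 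Applying the same tree-decomposition argument to the dyadic words $v_{2i-1}$ shows that $K_U$ is undistorted in $F$: there is a constant $C$, depending only on $U$, with $|k|_{K_U}\le C|k|_F$ for every $k\in K_U$.

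Now let $h\in H_U$ with $|h|_F=n$. By Lemma \ref{rational}, $h$ has, for each $j$, a pair of branches of the form $v_{2j}s_j^{\ell_j}\to v_{2j}s_j^{r_j}$. Setting $a_j=\ell_j-r_j$, the slope of $h$ at $\alpha_j$ equals $2^{a_j|s_j|}$. Since each of the generators $x_0^{\pm1},x_1^{\pm1}$ scales slopes by a factor of at most $2$, we have $|a_j||s_j|\le n$, hence $|a_j|\le n$. The element $g_j$ has slope $2^{|s_j|}$ at $\alpha_j$ and slope $1$ at every $\alpha_i$ with $i\ne j$ (because $v_{2j}$ is the unique even-indexed branch meeting $\Supp(g_j)$). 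Set $g=\prod_{j=1}^{m}g_j^{a_j}$ (the order is immaterial for our bounds). Then $hg^{-1}$ has slope $1$ on both sides of every $\alpha_j$. Because $\alpha_j\notin\zz$, no element of $F$ has a break point at $\alpha_j$, so $hg^{-1}$ is linear in a neighborhood of $\alpha_j$; fixing $\alpha_j$ with unit slope forces $hg^{-1}$ to be the identity on that neighborhood. Consequently $hg^{-1}\in K_U$.

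To conclude, $|g|_F\le\sum_j|a_j|\cdot|g_j|_F=O(n)$, so $|hg^{-1}|_F\le|h|_F+|g|_F=O(n)$, and the undistortion of $K_U$ yields $|hg^{-1}|_{K_U}=O(n)$. Therefore $|h|_{H_U}\le|hg^{-1}|_{K_U}+\sum_j|a_j|=O(n)$, which is the required linear bound. The main subtlety is the implication ``slope $1$ on both sides of a non-dyadic point $\Rightarrow$ identity nearby,'' which relies on the fact that break points of elements of $F$ are dyadic; this is precisely why the initial reduction isolated the points of $U_1$ into independent direct factors before applying the slope-absorption argument.
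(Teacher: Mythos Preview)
Your reduction to the case $U=U_2$ and your setup with $K_U$ and $g_1,\dots,g_m$ match the paper's approach, and the undistortion of $K_U$ in $F$ via caret-counting is fine. The gap is in the sentence ``Consequently $hg^{-1}\in K_U$.'' From the slope argument you correctly deduce that $hg^{-1}$ fixes a \emph{neighborhood} of each $\alpha_j$, but $K_U=\prod_iF_{[v_{2i-1}]}$ consists of the functions that fix the \emph{entire} dyadic intervals $[v_{2j}]$ pointwise. These two conditions are not the same: if $h$ has the pair of branches $v_{2j}s_j^{\ell_j}\to v_{2j}s_j^{r_j}$, then $hg_j^{-a_j}$ is the identity only on $[v_{2j}s_j^{\max(\ell_j,r_j)}]$, and on the remainder of $[v_{2j}]$ there is no reason for $h$ and $g_j^{a_j}$ to agree. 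So $hg^{-1}$ lies in the larger group $S$ of functions fixing a neighborhood of each $\alpha_j$, not in $K_U$, and you have not shown $S$ undistorted.

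The paper avoids this by multiplying on \emph{both} sides: one writes $h'=g_1^{-\ell_1}\cdots g_m^{-\ell_m}\,h\,g_m^{r_m}\cdots g_1^{r_1}$, which by construction has the pairs of branches $v_{2j}\to v_{2j}$ and hence genuinely lies in $K_U$. For this one needs $\ell_j$ and $r_j$ bounded separately, not just their difference $a_j$; your slope argument gives only $|a_j|\le n$. The required bound on $\ell_j$ and $r_j$ individually comes instead from the fact that the branches $v_{2j}s_j^{\ell_j}$ and $v_{2j}s_j^{r_j}$ occur in a tree-diagram for $h$ with $O(n)$ carets, so their lengths, and hence $\ell_j,r_j$, are $O(n)$. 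With this two-sided correction the rest of your argument goes through exactly as in the paper.
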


\proof
Let $U=\{\alpha_1,\dots,\alpha_n\}$. If $|U_1|=k$, then $U_1$ separates  $U_2$ into a union of $k+1$ disjoint subsets
$U_{2,1},\dots,U_{2,k+1}$. Let $i_1,\dots,i_{k}$ be the indexes such that $\alpha_{i_j}\in U_1$. Let $\alpha_0=0$, $\alpha_{n+1}=1$, $i_0=0$ and $i_{k+1}=n+1$.
By results of Section \ref{sec:alg},
$$H_U=B_{\{\alpha_{i_0},\dots,\alpha_{i_1}\}}\times
B_{\{\alpha_{i_1},\dots,\alpha_{i_2}\}}\times\dots\times
B_{\{\alpha_{i_{k-1}},\dots,\alpha_{i_k}\}}\times
B_{\{\alpha_{i_k},\dots,\alpha_{i_{k+1}}\}},
$$
where $B_{\{\alpha_{i_{j-1}},\dots,\alpha_{i_j}\}}=F_{[\alpha_{i_{j-1}},\alpha_{i_j}]}\cap H_{U_{2,j}}$ for $j=1,\dots,k+1$.

Since a direct product is undistorted if and only if each factor is undistorted, it suffices to prove that if $a<b$ belong to $(0,1)\cap\zz$ and $U'=U'_2$ is a set of rational numbers in $(a,b)$ then $F_{[a,b]}\cap H_{U'}$ is undistorted in $F$. We claim that $F_{[a,b]}$ is undistorted in $F$ and that $F_{[a,b]}\cap H_{U'}$ is undistorted in $F_{[a,b]}$. That will imply that $F_{[a,b]}\cap H_{U'}$ is undistorted in $F$.

Proposition 9 in \cite{Burillo} implies that $F_{[0,\frac{1}{2}]}$ and $F_{[\frac{1}{2},1]}$ are undistorted in $F$.
Clearly, one can replace $\frac{1}{2}$ by any number $\alpha\in(0,1)\cap\zz$ by conjugting $F$ by a function $f\in F$ such that $f(\frac{1}{2})=\alpha$. Similarly, let $g\in\pl_2(\mathbb{R})$ be a function such that $g(0)=0$, $g(\frac{1}{2})=a$ and $g(1)=b$, then
$F^{g}=F_{[0,b]}$ and ${F_{[\frac{1}{2},1]}}^g=F_{[a,b]}$. It follows that $F_{[a,b]}$ is undistorted in $F_{[0,b]}$. Since $F_{[0,b]}$ is undistorted in $F$ we get that $F_{[a,b]}$ is undistorted in $F$.

To prove that $F_{[a,b]}\cap H_{U'}$ is undistorted in $F_{[a,b]}$ we note that there is an isomorphism from $F_{[a,b]}$ to $F$ which maps $F_{[a,b]}\cap H_{U'}$ onto a subgroup $H_{U''}\le F$ where $U''=U''_2$ and $|U''|=|U'|$.
Clearly, it suffices to prove that $H_{U''}$ is undistorted in $F$. In other words, it suffices to prove the theorem for subsets $U$ such that $U=U_2$.

Let $U=U_2$. We shall use the proof of Theorem \ref{thm:rational}. Let $h\in H_U$ be a product of at most $n$ elements from $\{x_0,x_1\}$. Then, viewed as a reduced tree-diagram, $h$ has at most $cn$ vertices for some constant $c$. By the proof of Lemma \ref{KU}, $H_U$ is equal to $GK_UG$ where  $K_U$ is a direct product of $m+1$ copies $F_{[v_{2i-1}]}$ of $F$, $i=1,...,m+1$ (where $m=|U|$) and $G$ is  a subgroup generated by the elements $g_1,\dots,g_m$ defined before Lemma \ref{KU}.

Let us take the generators $[x_0]_{v_{2i-1}}, [x_1]_{v_{2i-1}}$ in each $F_{[v_{2i-1}]}$ and all the elements $g_1,\dots,g_m$ as the elements of a generating set $T$ of $H_U$.

The proof of Theorem \ref{thm:rational} gives us that by multiplying $h$ from the left and from the right by powers of $g_1,...,g_m$ one can get an element $h'$ in $K_U$.
The number of elements $g_1,...,g_m$ required for the product is bounded from above by the number of vertices of $h$.
Hence the element $h'$ has at most $cn+cn(c_1+....+c_m)$ vertices, where $c_1,...,c_m$ are constants depending on $g_1,...,g_m$. Then $h'=h_1\cdots h_{m+1}$ where $h_i\in F_{[v_{2i-1}]}$, hence all $h_i$ have pairwise disjoint supports. Then each  $h_i$ is represented by a diagram with at most $d_in$ vertices where $d_i$ is a constant.  By Property B of Burillo (see \cite{Burillo, AGS}),
 $h_i$ is a product of at most $d_i'n$ generators from $\{[x_0]_{v_{2i-1}},[x_1]_{v_{2i-1}}\}$. Hence $h$  is a product of at most $c'n$ generators of $H_U$ for some constant $c'$.
\endproof

\section{Isomorphism vs conjugacy}\label{s:conj}

In this section we show that the isomorphism between $H_U$ and $H_V$ (provided $\tau(U)\equiv\tau(V)$) is induced by conjugacy in some bigger group. In fact we construct a chain  $F< \F < \bar F<\Homeo([0,1])$ such that  $\F$ is similar to $F$ and consists of possibly infinite tree-diagrams, $\bar F$ is the completion of $F$ with respect to  a certain natural metric, and $H_U, H_V$ are conjugate inside $\F$. This strengthens Theorem \ref{th:m1}.

\subsection{The completion  of $F$ with respect to the Hamming metric}

Let $\mu$ be the standard Lebesgue  measure on $[0,1]$. Consider the following metric on the group $F$:
$$\dist_H(f,g)=\mu(\Supp(fg\iv))+\mu(\Supp(f\iv g))$$
or, equivalently,
$$\dist_H(f,g)=\mu(\{x\mid f(x)\ne g(x)\}) + \mu(\{x\mid f^{-1}(x)\ne g^{-1}(x)\}).$$

Clearly, $\dist_H$ is a distance function on $F$.

The metric $\dist_H$ is similar to the standard Hamming metric on the symmetric group $S_n$ although, unlike the Hamming metric on $S_n$, $\dist_H$ is not invariant with respect to left or right multiplication by elements of $F$. Thus we shall call $\dist_H$ the \emph{Hamming} metric on $F$.

\begin{Remark}\label{r:new}
It is easy to show that the group operations of $F$ (the multiplication and the inverse) are continuous with respect to $\dist_H$. This follows from the fact that for every $f\in F$ and every $\epsilon>0$ there exists $\delta>0$ such that if $\mu(S) < \delta$, then $\mu(f(S))<\epsilon$ (one can take $\delta=\frac{\epsilon}{2^n}$ where $2^n$ is the maximal slope of $f$). Note that this fact is not true for arbitrary $f\in \Homeo([0,1])$. Hence, although $\dist_H$ can be obviously extended to the whole $\Homeo([0,1])$, the multiplication in $\Homeo([0,1])$ is not continuous with respect to $\dist_H$.
\end{Remark}

The Hamming metric is of course related in a standard way to the norm $|f|=\mu(\Supp(f))$.
A similar norm on the group of diffeomorphisms of arbitrary manifolds has been considered in \cite[Example 1.19]{BIP}. For the group of measure preserving maps of a measure space, this is sometimes called the \emph{uniform metric} \cite{CPestov}.

\begin{Definition}
We denote by $\bar F$ the (Ra\v ikov) completion of $F$ with respect to $\dist_H$ \cite[Section 3.6]{AT}. It consists of Cauchy sequences $(f_n), n\ge 1,$ of elements of $F$ with two sequences $(f_n), (g_n)$ being equivalent if
$\lim_{n\rightarrow\infty}(\dist_H(f_n,g_n))=0$.
\end{Definition}

\begin{Theorem} \label{t:at} The standard embedding $F\to \Homeo([0,1])$ extends to an embedding $\bar F\to \Homeo([0,1])$.
\end{Theorem}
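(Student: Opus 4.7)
The plan is to define the extension $\Phi\colon\bar F\to\Homeo([0,1])$ as follows. Given a Cauchy sequence $(f_n)$ in $(F,\dist_H)$, I would first pass to a subsequence with $\sum_n \dist_H(f_n,f_{n+1})<\infty$. The Borel--Cantelli lemma applied to the summable series $\sum_n \mu(\{f_n\neq f_{n+1}\})$ and $\sum_n \mu(\{f_n^{-1}\neq f_{n+1}^{-1}\})$ produces full-measure sets $A,B\subseteq[0,1]$ on which $f_n(x)$ and $f_n^{-1}(y)$, respectively, are eventually constant; call the eventual values $\tilde f(x)$ and $\tilde g(y)$. Both are monotone non-decreasing on their domains and extend to monotone non-decreasing functions $[0,1]\to[0,1]$ (note that $0,1\in A\cap B$, since every $f_n\in F$ fixes the endpoints). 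The key structural observation is that, for $x\in A$ and $n$ large, $f_n^{-1}(\tilde f(x))=f_n^{-1}(f_n(x))=x$ is eventually constant in $n$, so $\tilde f(x)\in B$ and $\tilde g(\tilde f(x))=x$; symmetrically $\tilde g(B)\subseteq A$ and $\tilde f(\tilde g(y))=y$, making $\tilde f|_A\colon A\to B$ a bijection with inverse $\tilde g|_B$.

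Next, I would show that $\tilde f$ is strictly increasing and continuous on $[0,1]$, hence a homeomorphism with inverse $\tilde g$. If $\tilde f$ had a plateau on some interval $[x_1,x_2]$, then $A\cap[x_1,x_2]$ would have to embed via $\tilde f|_A$ into a single point, contradicting $\mu(A\cap[x_1,x_2])>0$. If $\tilde f$ jumped at $x_0$ with $c_1=\tilde f(x_0^-)<\tilde f(x_0^+)=c_2$, monotonicity would force $\tilde f(A)\subseteq[0,c_1]\cup\{\tilde f(x_0)\}\cup[c_2,1]$, so $B=\tilde f(A)$ would miss the interval $(c_1,c_2)$ up to one point, contradicting $\mu(B)=1$. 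I then set $\Phi([f_n])=\tilde f$. Well-definedness on equivalence classes is immediate: if $(f_n)\sim(f_n')$ then $\mu(\{f_n\neq f_n'\})\to 0$, which combined with the eventual equalities $f_n=\tilde f$ and $f_n'=\tilde f'$ almost everywhere forces $\tilde f=\tilde f'$ a.e.\ and therefore everywhere by continuity. Injectivity of $\Phi$ is also easy: if $\tilde f=\mathrm{id}$ then $f_n(x)=x$ eventually for a.e.\ $x$, so dominated convergence gives $\mu(\Supp(f_n))\to 0$ along the subsequence and, symmetrically, $\mu(\Supp(f_n^{-1}))\to 0$; hence $\dist_H(f_n,\mathrm{id})\to 0$ along the subsequence and therefore along the whole Cauchy sequence.

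To verify the homomorphism property, let $(f_n),(g_n)$ be Cauchy sequences with associated homeomorphisms $\tilde f,\tilde g$; by Remark \ref{r:new} the sequence $(f_n g_n)$ is Cauchy, so let $\tilde h$ be its associated homeomorphism. For $x$ in the intersection of the eventually-constant full-measure sets for $(g_n)$ and $(f_n g_n)$, both $g_n(x)=\tilde g(x)$ and $f_n(g_n(x))=\tilde h(x)$ hold for all large $n$, whence $f_n(\tilde g(x))$ is eventually constant and equal to $\tilde h(x)$. By the very definition of the full-measure set $A$ for $(f_n)$, this places $\tilde g(x)\in A$ and forces $\tilde f(\tilde g(x))=\tilde h(x)$, so $\tilde h$ and $\tilde f\circ\tilde g$ agree on a full-measure set and hence everywhere by continuity. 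The main obstacle is the continuity of $\tilde f$: a priori the limit could be a singular-type homeomorphism for which absolute continuity fails, so I cannot pull null sets back through $\tilde f$ or $\tilde g$ freely; the resolution is to exploit the mutual bijection $A\leftrightarrow B$ arising from $f_n\circ f_n^{-1}=\mathrm{id}$. The same ``eventually constant'' phrasing is essential in the homomorphism step, where a naive argument via mere almost-everywhere convergence of composites would require pulling null sets back through $\tilde g$.
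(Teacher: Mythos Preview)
Your argument is correct, but it follows a genuinely different route from the paper's. The paper avoids subsequences and Borel--Cantelli entirely by proving a stronger statement: for a Cauchy sequence $(g_m)$ in $(F,\dist_H)$, the sequence $(g_m(x))$ is Cauchy in $[0,1]$ for \emph{every} $x$, not just almost every $x$. The key observation is purely order-theoretic: if $g_{m_1}(x)=a$ and $g_{m_2}(x)=b$ with $a<b$, then monotonicity forces $g_{m_1}^{-1}((a,b))\subseteq(x,1]$ and $g_{m_2}^{-1}((a,b))\subseteq[0,x)$, so the whole interval $(a,b)$ lies in $\{g_{m_1}^{-1}\ne g_{m_2}^{-1}\}$ and hence $\dist_H(g_{m_1},g_{m_2})\ge b-a$. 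This immediately gives a pointwise limit $g$ on all of $[0,1]$; the same argument applied to $(g_m^{-1})$ yields $g'$, and a further monotonicity argument shows $gg'=g'g=\mathrm{id}$ everywhere. Well-definedness and injectivity are then handled by similar direct estimates.

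Your approach trades this order-theoretic trick for measure theory: summable subsequence, Borel--Cantelli, and the mutual bijection $A\leftrightarrow B$ coming from $f_n\circ f_n^{-1}=\mathrm{id}$. The no-plateau and no-jump arguments via $\mu(A)=\mu(B)=1$ are clean, and your handling of the homomorphism step---using that eventual constancy of $f_n(\tilde g(x))$ \emph{is} the defining property of $\tilde g(x)\in A$, so no pullback of null sets through a possibly singular $\tilde g$ is needed---is the right way to dodge the obstacle you flagged. The paper's route is shorter and exploits the one-dimensional ordered setting more fully; yours is more measure-theoretic and would transplant more readily to settings without a total order, at the cost of the subsequence bookkeeping (which you should make explicit when comparing two equivalent Cauchy sequences or verifying the product formula along a common summable subsequence).
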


\proof Let a sequence of functions $(g_m)$ from $F$ be Cauchy with respect to the Hamming metric.
We claim that then the pointwise limit of the sequence $(g_m)$ exists.
Namely, for every number $x \in [0,1]$, consider the sequence of real numbers $(g_m(x))$, $m\ge 1$. We claim that $(g_m(x))$ is a Cauchy sequence of numbers from $[0,1]$ (with respect to the standard metric on $[0,1]$) and hence has a limit in $[0,1]$.
Indeed, for each $\epsilon$ there is some $n\in \N$ such that for all $m_1,m_2>n$ we have $\dist_H(g_{m_1},g_{m_2})<\epsilon$.
It suffices to prove that for the same $n$, the diameter of the set $B_n=\{g_m(x):m>n\}$ is at most $\epsilon$.
Assume that the diameter of $B_n=\{g_m(x):m>n\}$ is greater than $\epsilon$ and let $a,b$ be elements of $B_n$ such that $a<b$ and $b-a>\epsilon$. Let $m_1,m_2>n$ be such that
$g_{m_1}(x)=a$ and $g_{m_2}(x)=b$.
Clearly, $g_{m_1}^{-1}([a,b])$ is contained in $[x,1]$.
Similarly, $g_{m_2}^{-1}([a,b])$ is contained in $[0,x]$.
As such, for all $y$ in the open interval $(a,b)$ we have $g_{m_1}^{-1}(y)\neq g_{m_2}^{-1}(y)$.
Therefore, $\dist_H(g_{m_1},g_{m_2})\ge b-a>\epsilon$, in contradiction to the assumption.

Thus, $(g_m(x))$  has a limit in $[0,1]$. Let $g(x)$ be the limit of $(g_m(x))$ for each $x\in [0,1]$.
Then $g(0)=0, g(1)=1$. Moreover $g(x)$ is a non-decreasing  function. Indeed, if $x<y$, then for each $m$ we have $g_m(x)<g_m(y)$ and in particular the limits satisfy $g(x)\le g(y)$.

Note that the sequence $(g_m\iv)$, $m\ge 1$, is also a Cauchy sequence (by the definition of $\dist_H$). Hence we can define a function $g'(x)$ as $\lim_{m\to \infty} g_m\iv(x)$.
We claim that $gg'(x)=x$ for all $x\in [0,1]$. Indeed, assume that for some $x$, $gg'(x)=y\neq x$. We can assume that $y>x$, the argument for $y<x$ being similar.
Let $\epsilon=\frac{y-x}{4}$ and let $n\in\N$ be such that for  all $m_1,m_2> n$, $\dist_H(g_{m_1},g_{m_2})<\epsilon$. Since $g_m^{-1}(g(x))$ has limit $g'(g(x))=y$, there is
$m_1>n$ such that $|g_{m_1}^{-1}(g(x))-y|<\epsilon$. 
Let $y_1=g_{m_1}^{-1}(g(x))$. By assumption, $|y_1-y|<\epsilon$. Therefore, for $z=\frac{x+y}{2}$, we have $z\in (x,y_1)$.
Let $c=g_{m_1}(z)<g_{m_1}(y_1)=g(x)$.  Clearly, $g_{m_1}([x,z])\subseteq [0,c]$.
Since the sequence $g_m(x)$ converges to $g(x)$ and $c<g(x)$, for some $m_2>m_1$ we have $g_{m_2}(x)>c$.
In particular, $g_{m_2}([x,z])\subseteq [c,1]$.
It follows that $[x,z]\subseteq \{t:g_{m_1}(t)\neq g_{m_2}(t)\}$. Thus, $\dist_H(g_{m_1},g_{m_2})\ge z-x=2\epsilon$, in contradiction to $m_1,m_2$ being greater than $n$.
Thus, $gg'(x)=x$ for all $x\in [0,1]$. A similar argument shows that $g'g(x)=x$. Thus, $g$ is a one-to-one increasing function $[0,1]\to [0,1]$, hence $g$ is an element from $\Homeo([0,1])$.

Now let $(h_m)$ be a Cauchy sequence in $F$ equivalent to $(g_m)$ and let $h$ be the pointwise limit of $(h_m)$, that is, for every $x\in [0,1]$, $h(x)=\lim_{m\rightarrow \infty}h_m(x)$. We claim that $g=h$. Indeed, assume by contradiction that for some $x\in (0,1)$, $g(x)\neq h(x)$. Without loss of generality we can assume that $g(x)<h(x)$.
Let $a,b$ be such that $g(x)<a<b<h(x)$. By definition of $g(x)$ and $h(x)$, for every large enough $m$, we have $g_m(x)<a$ and $h_m(x)>b$. Thus, $g_m^{-1}([a,b])\subseteq[x,1]$ and $h_m^{-1}([a,b])\subseteq [0,x]$. As before, that implies that for all large enough $m$, $\dist_H(g_m,h_m)\ge b-a$, in contradiction to $(g_m)$ and $(h_m)$ being equivalent Cauchy sequences.

Thus, mapping a Cauchy sequence $(g_m)$ in $F$ to its pointwise-limit as defined above gives a well defined mapping $\psi$ from $\bar{F}$ to $\Homeo([0,1])$. 

Note that the map $\psi\colon \bar F\to \Homeo([0,1])$ is clearly a homomorphism whose restriction to $F$  coincides with the standard embedding of $F$ into $\Homeo([0,1])$.

We claim that $\psi$ is injective.
To prove the claim, assume that $(f_m)$ is a Cauchy sequence of elements in $F$ with $\lim_{m\to \infty} f_m(x)=x$ for every $x\in [0,1]$. We need to prove that then $$\lim_{m\to \infty} \dist_H(f_m,\1)=0.$$
Suppose that this equality is not true. Then we can assume that for infinitely many $m, \mu(\Supp(f_m))>d$ for some $d > 0$. Restricting to a subsequence, we can assume that for all $m, \mu(\Supp(f_m))>d$.
Since $(f_m)$ is Cauchy, there is $n\in\N$ such that for all $m>n$,
$\dist_H(f_{m},f_{n})<d$.

For each $k\in\N$, let
$$C_k=\bigcap_{m>k}\Supp(f_nf_m^{-1}).$$
Then the sequence $C_k$, $k\in\N$ is an increasing sequence of closed subsets of $[0,1]$.
We claim that $\bigcup_{k\in \N} C_k$ contains the interior of $\Supp(f_n)$, denoted by $\mathrm{Int}(\Supp(f_n))$.
Indeed, let $x\in \mathrm{Int}(\Supp(f_n))$ and assume by contradiction that for all $k$, $x\notin C_k$. Then for all $k\in\N$, there is some $m>k$ such that $x\notin \Supp(f_nf_m^{-1})$, so that $f_m(x)=f_n(x)$. It follows, that the value $f_n(x)$ appears infinitely many times in the sequence $(f_m(x))$. Since the sequence $(f_m(x))$ is convergent, we must have $\lim_{m\rightarrow \infty}f_m(x)=f_n(x)\neq x$, in contradiction to the assumption.

Thus, the union of $C_k$, $k\in\N$ contains the interior of the support of $f_n$. Since $C_k$ is increasing, we have
 $$\lim_{k\rightarrow\infty}\mu(C_k)=\mu(\bigcup_{k\in\N} C_k)\ge\mu(\Supp(f_n))>d.$$
It follows, that there is some $k>n$ such that $\mu(C_k)>d$. This clearly implies that $\dist_H(f_k,f_n)>d$ in contradiction to the choice of $n$.
\endproof

\subsection{A subgroup of $\bar F$}

Let $T$ be an infinite binary tree. Then there is a natural left-to-right (lexicographic) order on the  branches of $T$, and a natural subdivision of the unit interval into possibly infinite number of intervals corresponding to the  branches of the tree. Infinite branches of $T$  correspond to intervals with empty interior of that subdivision. Other intervals have finite dyadic endpoints.

\begin{Definition}\label{d:9} Consider the set $\F$ of triples $(T_+, T_-,\phi)$ where $T_+, T_-$ are binary trees and $\phi$ is a bijection from the set of branches of $T_+$ to the set of branches of $T_-$ satisfying the following properties:

\begin{enumerate}
\item $T_+$ and $T_-$ have the same finite number of infinite branches;
\item If a branch $p$ in $T_+$ is to the left of branch $q$, then $\phi(p)$ is to the left of $\phi(q)$ in $T_-$;
\item $\phi$ takes infinite branches to infinite branches.
\end{enumerate}

If $T_+$ and $T_-$ are finite trees, then the function $\phi$ taking leaves of $T_+$ to leaves of $T_-$ is uniquely defined. So $F$ is naturally a subset of $\F$.

One can extend the equivalence relation (inserting and reducing pairs of common carets as in Section \ref{sec:red}) and the group operations of $F$, multiplication and  taking inverse (see Remark \ref{r:000}), to the set $\F$  making $\F$ a group containing $F$ as a subgroup.
\end{Definition}

Every triple $(T_+,T_-,\phi)$ from the group $\F$ corresponds to a homeomorphism  $\gamma=\gamma(T_+,T_-,\phi)$ of $[0,1]$ that takes linearly each interval $[p]$ of the partition corresponding to a branch $p$ of $T_+$ to the interval $[\phi(p)]$ corresponding to the branch $\phi(p)$ of $T_-$. Properties 1, 2, 3 of Definition \ref{d:9} imply that $\gamma$ is indeed a homeomorphism of $[0,1]$. Note that for every point $x$ of $[0,1]$, except for the finite number of points corresponding to infinite branches of $T_+$, the homeomorphism $\gamma$ has left and right derivatives at $x$ which are powers of $2$ and all break points of the derivative of $\gamma$, except for the finite set of points, are from $\zz$. As for the R. Thompson group $F$, this gives an embedding of $\F$ into $\Homeo([0,1])$ which extends the natural embedding of $F$. We shall identify $\F$ with its image in $\Homeo([0,1])$.

\begin{Theorem}\label{t:m4}
The image of $\F$ in $\Homeo([0,1])$ is inside $\bar F$, viewed as a subgroup of $\Homeo([0,1])$.
\end{Theorem}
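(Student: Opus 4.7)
\textbf{Proof plan for Theorem \ref{t:m4}.}

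The plan is to show that for each $\gamma \in \F$ and each $\epsilon > 0$ there is an element $f \in F$ with $\dist_H(f, \gamma) < \epsilon$. Taking $\epsilon = 1/n$ then yields a Cauchy sequence $(f_n)$ in $(F, \dist_H)$ whose pointwise limit in $\Homeo([0,1])$, which exists by the proof of Theorem \ref{t:at}, is $\gamma$; so $\gamma$ lies in the image of $\bar F$.

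Let $(T_+, T_-, \phi)$ represent $\gamma$, and let $x_1 < \cdots < x_K$ be the geometric images of the infinite branches $p_1, \ldots, p_K$ of $T_+$, with $y_i := \gamma(x_i)$ the images of the corresponding infinite branches of $T_-$. Two observations underlie the construction. \emph{First,} $\gamma$ restricts to a linear dyadic-slope map $[p] \to [\phi(p)]$ on every leaf interval, so $\gamma$ sends any dyadic number of $[0,1] \setminus \{x_1, \ldots, x_K\}$ to a dyadic number. \emph{Second,} a K\"onig-style argument shows that the leaves of $T_+$ can accumulate only at the $x_i$: a sequence of distinct leaves whose intervals $[p_n]$ shrink to some $z$ yields, after passing to a subsequence, an infinite path in $T_+$, which must be one of the $p_i$, forcing $z = x_i$.

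Given $\epsilon > 0$, choose dyadic rationals $a_i < x_i < b_i$ close to $x_i$ and distinct from all $x_j$, and set $N_i^+ := [a_i, b_i]$ and $N_i^- := [\gamma(a_i), \gamma(b_i)] = \gamma(N_i^+)$. By the first observation these are dyadic intervals, and by continuity of $\gamma$ they can be made pairwise disjoint with $\sum_i \mu(N_i^+) + \sum_i \mu(N_i^-) < \epsilon$. Insert finitely many matched carets in $T_+$ and $T_-$ to split any leaf of $T_+$ that straddles some $a_i$ or $b_i$ (together with the corresponding leaf of $T_-$, whose image under $\gamma$ straddles $\gamma(a_i)$ or $\gamma(b_i)$ iff the original leaf of $T_+$ did). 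This is a finite refinement that does not change $\gamma$, and after it is done every leaf of $T_+$ is either contained in some $N_i^+$ or disjoint from $\bigcup_i N_i^+$. By the second observation the leaves $r_1, \ldots, r_M$ of $T_+$ outside $\bigcup_i N_i^+$ are finite in number, and since $[\phi(r_j)] = \gamma([r_j])$ the image leaves $s_j := \phi(r_j)$ are likewise disjoint from $\bigcup_i N_i^-$. Now Lemma \ref{choice} produces $f \in F$ that maps each $[r_j]$ linearly onto $[s_j]$ (so agreeing with $\gamma$ there) and each $N_i^+$ onto $N_i^-$. The sets $\{x : f(x) \ne \gamma(x)\} \subseteq \bigcup_i N_i^+$ and $\{x : f^{-1}(x) \ne \gamma^{-1}(x)\} \subseteq \bigcup_i N_i^-$ together give $\dist_H(f, \gamma) < \epsilon$.

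The main technical obstacle is the dyadic coupling between $T_+$ and $T_-$: once $N_i^+$ is specified, the image $N_i^-$ is forced, and we need both to be dyadic intervals so that Lemma \ref{choice} applies; this is precisely where we use that $\gamma$ preserves dyadicity off the singular set. Boundary cases $x_i \in \{0,1\}$, or $x_i$ itself dyadic with two infinite branches sharing the same geometric image, require minor adjustments (one-sided neighborhoods, or treating each branch separately) but follow the same template. Pointwise convergence $f_n \to \gamma$ is then immediate: for any $x \notin \{x_1, \ldots, x_K\}$ one has $x \notin \bigcup_i N_i^{+}$ for all sufficiently large $n$ and hence $f_n(x) = \gamma(x)$, while for $x = x_i$ one has $f_n(x_i) \in N_i^-$ and $N_i^-$ shrinks to $\{y_i\}$, giving $f_n(x_i) \to y_i = \gamma(x_i)$.
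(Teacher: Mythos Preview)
Your argument is correct and follows the same core idea as the paper: approximate $\gamma\in\F$ in the Hamming metric by elements of $F$ that agree with $\gamma$ outside small neighborhoods of the finitely many infinite-branch points. The paper's execution is more streamlined, however. Instead of choosing arbitrary dyadic neighborhoods $N_i^+=[a_i,b_i]$, the paper takes the neighborhoods to be the dyadic intervals $[u_{i,m'}]$ associated to length-$m'$ prefixes of the infinite branches themselves, with $m'$ large enough that $g([u_{i,m'}])\subseteq [v_{i,m}]$. This choice makes the $a_i,b_i$ automatically leaf boundaries of a finite truncation $S_{m'}\subset T_+$ (the subtree of vertices not strictly below any $u_{i,m'}$), so no caret insertion is needed; finiteness of the leaves outside the neighborhoods is immediate from ``$S_{m'}$ has no infinite branch, hence is finite'' rather than a separate accumulation argument; and the approximant $g_m\in F$ is read off directly as a tree-diagram without invoking Lemma~\ref{choice}. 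Your detour through dyadicity of $\gamma(a_i),\gamma(b_i)$ and Lemma~\ref{choice} is not wrong, just unnecessary once the neighborhoods are chosen compatibly with the tree structure.
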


\proof

Let $g=(T_+,T_-,\phi)$ be a triple in $\F$. It suffices to show that there is a sequence $(g_m), m\ge 1,$ of elements in $F$ which converges to $g$ in the Hamming metric on $\Homeo([0,1])$.
Let $\omega_i\rightarrow \omega'_i$, $i=1,\dots,n$ be the pairs of infinite branches of $(T_+,T_-,\phi)$.
For every $m\in\mathbb{N}$ and every $i=1,...,n$ let $u_{i,m}$ (resp. $v_{i,m}$) be the prefix of $\omega_i$ (resp. $\omega'_i$) of length $m$. For each $i$, the sequence $[u_{i,m}]$, $m\in \N$ (resp, $[v_{i,m}]$, $m\in\N$) is a nested sequence of intervals with intersection $\{.\omega_i\}$ (resp. $\{.\omega_i'\}$).
Let $m\in\mathbb{N}$. Since $g$ is continuous and $g(.\omega_i)=.\omega_i'$, $i=1,\dots,n$, there exists some $m'\ge m$ such that for all $i=1,\dots,n$, $g([u_{i,m'}])\subseteq [v_{i,m}]$. Consider the subtree $S_{m'}$ of $T_+$ of all vertices $x$ such that none of the $u_{i,m'}$'s is a strict prefix of the path from the root to $x$. Then $S_{m'}$ is a rooted binary subtree of $T_+$. Clearly, $S_{m'}$ does not have any infinite branch. Since $S_{m'}$ is a  binary tree, that implies that it has only finitely many branches. Clearly, $n$ of these branches are $u_{1,m'},\dots,u_{n,m'}$. 
The rest of the branches of $S_{m'}$ are some branches of $T_+$. For any branch $u$ of $S_{m'}$ which is not one of the branches $u_{1,m'},\dots, u_{n,m'}$, there is a branch $v$ of $T_-$ such that $v=\phi(u)$.
There exists a function $g_m\in F$ whose tree-diagram contains all these branches $u\to \phi(u)$. In particular, $g_m$ coincides with $g$ on $[0,1]\setminus[u_{1,m'}]\cup\dots\cup[u_{n,m'}]\supseteq[0,1]\setminus[u_{1,m}]\cup\dots\cup[u_{n,m}] $. The choice of $m'$ implies that $g_m^{-1}$
coincides with $g^{-1}$ on $[0,1]\setminus[v_{1,m}]\cup\dots\cup[v_{n,m}]$.
It follows that the sequence $(g_m), m=1,2,...,$ converges to $g$ in the Hamming metric on $\Homeo([0,1])$.
%
%
\endproof


\begin{Theorem}\label{t:pl}
If $U$ and $V$ are two finite sets of numbers from $(0,1)$ and $\tau(U)\equiv \tau(V)$, then the subgroups $H_U$ and $H_V$ are conjugate in $\F$.
\end{Theorem}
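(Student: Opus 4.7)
The plan is to construct an explicit $\gamma \in \F$ with $\gamma H_W \gamma^{-1} = H_U$, after first reducing to a convenient form for $W$. Following the initial reduction in the proof of Theorem \ref{th:m1}, I would apply a conjugation by an $f \in F \subset \F$ to replace $H_V$ by $H_W$, where $W$ satisfies $W_1 = U_1$ and, for each $\beta = .p_\beta s_\beta^{\N} \in U_2$, the corresponding element of $W_2$ has the form $\delta = .p_\beta u_\delta^{\N}$ sharing its prefix $p_\beta$.

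Next I would build $\gamma$ piecewise on the subdivision of $[0,1]$ obtained by removing the points of $U_1 \cup U_3 \cup W_3$ together with the dyadic intervals $[p_\beta]$, $\beta \in U_2$. On every dyadic piece containing no non-dyadic $U \cup W$-point, set $\gamma = \mathrm{id}$. On each piece having an irrational endpoint, define $\gamma$ as the pointwise limit of a compatible family of elements of $F$ exactly as in Lemmas \ref{der}, \ref{0}, and \ref{1}; such a limit has a single infinite branch at the irrational endpoint and hence lies in $\F$. On each $[p_\beta]$, $\beta \in U_2$, the piece $\gamma|_{[p_\beta]}$ would be defined via a self-similar tree-pair with $T_+$ carrying the infinite branch $p_\beta u_\delta^{\N}$, $T_-$ carrying $p_\beta s_\beta^{\N}$, and $\gamma([p_\beta u_\delta^k]) = [p_\beta s_\beta^k]$ for every $k \ge 0$, so that $\gamma$ restricted to each $[p_\beta u_\delta^k]$ is the affine conjugate of $\gamma|_{[p_\beta]}$ via the natural rescalings. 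Adjacent pieces agree on their shared dyadic endpoints, so gluing yields a single $\gamma \in \F$ with $\gamma(W) = U$.

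The crux is to make $\gamma|_{[p_\beta]}$ intertwine the stable letters $f_\delta \in G_W$ and $g_\beta \in G_U$ from the proof of Theorem \ref{th:m1}. Writing $A_1 = [p_\beta] \setminus [p_\beta u_\delta]$ and $B_1 = [p_\beta] \setminus [p_\beta s_\beta]$, I would strengthen the choices of $f_\delta, g_\beta$ (permissible by Lemma \ref{choice} after refining their tree-diagrams) so that they coincide on $[0,1] \setminus [p_\beta]$ and so that $f_\delta(A_1) = g_\beta(B_1)$ as dyadic subsets of $[x_i, x_{i+1}] \setminus [p_\beta]$. Setting $\gamma|_{A_1}(y) := g_\beta^{-1}(f_\delta(y))$ then defines a PL map $A_1 \to B_1$ with dyadic breaks and power-of-$2$ slopes, which extends to $[p_\beta u_\delta^k]$ by self-similarity. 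A direct computation using the relation $g_\beta(.p_\beta s_\beta \mu) = .p_\beta \mu$ and the self-similar formula for $\gamma$ verifies $\gamma f_\delta = g_\beta \gamma$ on all of $[p_\beta]$; outside $[p_\beta]$ this reduces to $f_\delta = g_\beta$.

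Finally, to verify $\gamma H_W \gamma^{-1} = H_U$ it would suffice to show $\gamma H_W \gamma^{-1} \subseteq F$, since both sides fix $U$ pointwise. Using the decomposition $H_W = K_W G_W$ from the proof of Theorem \ref{th:m1}, any $h \in K_W$ has support in a compact dyadic subinterval bounded away from all non-dyadic points of $W$, on which $\gamma$ has only finitely many PL pieces; hence $\gamma h \gamma^{-1} \in K_U \subset H_U$. Each stable letter $f_\delta$ is conjugated to $g_\beta \in G_U \subset H_U$ by the intertwining above. The symmetric argument applied to $\gamma^{-1}$ yields the reverse containment. The main obstacle is the third paragraph: simultaneously arranging the self-similar structure of $\gamma|_{[p_\beta]}$ and the refined choices of $f_\delta, g_\beta$ so that the intertwining relation holds while $\gamma \in \F$ is maintained; everything else is either a direct imitation of Theorem \ref{th:m1} or a routine verification.
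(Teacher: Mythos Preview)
Your construction of the conjugator is essentially the paper's: a self-similar tree-pair on each interval $[p_\beta]$ for $\beta\in U_2$ (this is Lemma~\ref{111}) and a pointwise limit of a compatible family of elements of $F$ near each irrational point (this is Lemma~\ref{222}). One simplification the paper makes that you omit is to carry out the preliminary $F$-conjugation for type-$3$ points as well, placing each $\delta\in V_3$ inside the same small dyadic interval $[p_\beta]$ as the corresponding $\beta\in U_3$; this avoids the awkward mixed subdivision by $U_3\cup W_3$ and lets you build $\gamma$ interval-by-interval with support in $\bigcup_{\beta\in U_2\cup U_3}[p_\beta]$.

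The real divergence is in the verification. You decompose $H_W$ into $K_W$ and the stable letters $f_\delta$ and then try to force the intertwining $\gamma f_\delta\gamma^{-1}=g_\beta$; you correctly flag this as the main obstacle. The paper sidesteps this entirely. Once $\gamma$ is built so that $\gamma([p_\beta u_\delta^{\,k}])=[p_\beta s_\beta^{\,k}]$ for all $k\ge 0$, one checks $\gamma H_{\{\delta\}}\gamma^{-1}\subseteq F$ directly for an \emph{arbitrary} $g\in H_{\{\delta\}}$: by Lemma~\ref{rational} such a $g$ has a pair of branches $p_\beta u_\delta^{\,m_1}\to p_\beta u_\delta^{\,m_2}$, and then $\gamma g\gamma^{-1}$ is linear on $[p_\beta s_\beta^{\,m_1}]$ (it has the pair of branches $p_\beta s_\beta^{\,m_1}\to p_\beta s_\beta^{\,m_2}$) while on the complement it coincides with a product of finite truncations of $\gamma$ and hence with an element of $F$. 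No specific generators, no intertwining relation, and no refinement of $f_\delta,g_\beta$ are needed. Your route can be pushed through, but the paper's direct argument removes precisely the step you identified as the hardest.
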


To prove Theorem \ref{t:pl}, we need the following two lemmas.

\begin{Lemma}\label{111}
Let $\alpha=.pu^\mathbb{N}$ and $\beta=.pv^{\mathbb{N}}$ be two rational numbers in $(0,1)$ which are not in $\zz$.
Then the stabilizers $H_{\{\alpha\}}$ and $H_{\{\beta\}}$ are conjugate in $\F$. Moreover, the conjugator $f$ can be chosen to have support  in $[p]$.
\end{Lemma}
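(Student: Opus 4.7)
The plan is to build $f\in\F$ with support in $[p]$ by a self-similar recursive construction, and then exploit that self-similarity to verify $fH_{\{\alpha\}}f\iv\subseteq H_{\{\beta\}}$. After rescaling $[p]$ linearly to $[0,1]$, it suffices to produce $F_0\in\F$ of $[0,1]$ fixing $0,1$ and sending $.u^{\N}$ to $.v^{\N}$; then $f$ is taken to be $\sigma_pF_0\sigma_p\iv$ on $[p]$ and the identity elsewhere, where $\sigma_p,\sigma_u,\sigma_v\colon[0,1]\to[p],[u],[v]$ are the canonical linear parameterizations. Because $\alpha,\beta\notin\zz$, the words $u,v$ are neither all $0$s nor all $1$s, so $.u,\,.u+2^{-|u|},\,.v,\,.v+2^{-|v|}$ all lie in $(0,1)\cap\zz$. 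Using Lemma \ref{choice}, choose $h\in F$ mapping $[0,.u]\to[0,.v]$, $[u]\to[v]$, and $[.u+2^{-|u|},1]\to[.v+2^{-|v|},1]$. Define $F_0$ to equal $h$ on $[0,1]\setminus[u]$ and extend it to $[u]$ by the recursive relation
$$F_0|_{[u]}=\sigma_v\circ F_0\circ\sigma_u\iv.$$
Iterating $n$ times, $F_0$ on the annulus $[u^n]\setminus[u^{n+1}]$ equals the appropriate restriction of $\sigma_v^n\circ h\circ\sigma_u^{-n}$, which is piecewise linear with dyadic slopes and dyadic break points. The prescription $F_0(.u^{\N})=.v^{\N}$ is continuous since $[v^n]$ shrinks to $.v^{\N}$. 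The trees $T_+,T_-$ associated to $F_0$ carry unique infinite branches $u^{\N}$ and $v^{\N}$, matched by the bijection $\phi$, so $F_0\in\F$; consequently $f\in\F$ has support in $[p]$ and satisfies $f|_{[pu^m]}=\sigma_{pv^m}\circ F_0\circ\sigma_{pu^m}\iv$ for every $m\ge 0$, since $\sigma_p\sigma_u^m=\sigma_{pu^m}$ and $\sigma_p\sigma_v^m=\sigma_{pv^m}$.

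Now let $g\in H_{\{\alpha\}}$. By Lemma \ref{rational}, $g$ has a pair of branches $pu^{m_1}\to pu^{m_2}$ for some $m_1,m_2\ge 0$, hence $g|_{[pu^{m_1}]}$ is the linear map onto $[pu^{m_2}]$, which in rescaled coordinates equals $\sigma_{pu^{m_2}}\circ\sigma_{pu^{m_1}}\iv$. Combining the three formulas, for $x\in[pv^{m_1}]$ one computes
$$fgf\iv(x)=\bigl(\sigma_{pv^{m_2}}F_0\sigma_{pu^{m_2}}\iv\bigr)\bigl(\sigma_{pu^{m_2}}\sigma_{pu^{m_1}}\iv\bigr)\bigl(\sigma_{pu^{m_1}}F_0\iv\sigma_{pv^{m_1}}\iv\bigr)(x)=\sigma_{pv^{m_2}}\sigma_{pv^{m_1}}\iv(x),$$
a linear map giving $fgf\iv$ the pair of branches $pv^{m_1}\to pv^{m_2}$. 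Outside $[pv^{m_1}]$, each of $f$, $g$, and $f\iv$ is piecewise linear with dyadic slopes and dyadic break points, using only finitely many linear pieces (those of the finite tree-diagram of $g$, together with the finitely many rescaled copies of $h|_{[0,1]\setminus[u]}$ occupying the levels $0,1,\ldots,m_i-1$ of the construction of $F_0$). Their composition therefore has finitely many break points, all in $\zz$, and only dyadic slopes, so $fgf\iv\in F$; since $fgf\iv(\beta)=f(g(\alpha))=f(\alpha)=\beta$, we conclude $fgf\iv\in H_{\{\beta\}}$. The reverse inclusion follows by the symmetric argument applied to $f\iv$.

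The main obstacle is arranging $f$ so that its rescaled copies of $F_0$ and $F_0\iv$ cancel exactly inside the conjugation formula. A generic $f\in\F$ sending $\alpha$ to $\beta$ would produce $fgf\iv$ that fails to be piecewise linear at $\beta$; only by matching the recursive scaling structure of $f$ to both period lengths $|u|$ and $|v|$, so that the \emph{same} universal piece $F_0$ appears on both sides of the linear map provided by $g|_{[pu^{m_1}]}$, is the $F$-structure preserved under conjugation. Equivalently, the identity $\sigma_{pu^{m_2}}\sigma_{pu^{m_1}}\iv$ governing $g$ must be intertwined with $\sigma_{pv^{m_2}}\sigma_{pv^{m_1}}\iv$ through a single self-similar map, and the recursive definition above is precisely what achieves this intertwining.
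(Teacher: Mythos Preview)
Your proof is correct and follows the same route as the paper's: construct $f\in\F$ by a self-similar recursion (the paper phrases this as iteratively attaching copies of a basic tree-diagram $(R_+,R_-)$ at the branches $pu^j$, $pv^j$; your fixed-point equation $F_0|_{[u]}=\sigma_vF_0\sigma_u\iv$ produces the same element), then invoke Lemma~\ref{rational} so that on $[pv^{m_1}]$ the conjugate collapses to the linear branch $pv^{m_1}\to pv^{m_2}$, while on the complement only finitely many levels of the recursion are used.

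One correction is needed: Lemma~\ref{rational} requires $u$ to be a \emph{minimal} period of $\alpha$. If $u=s^k$ with $k>1$ and $s$ minimal, then $g$ is only guaranteed a branch pair $ps^{n_1}\to ps^{n_2}$, and when $n_2-n_1$ is not a multiple of $k$ the self-similarity of your $F_0$ (which has period $|u|$, not $|s|$) will not align with $g$, so the cancellation yielding $\sigma_{pv^{m_2}}\sigma_{pv^{m_1}}\iv$ fails. The paper handles this by reducing to minimal periods $u,v$ at the outset; add that reduction and your argument goes through verbatim.
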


\begin{proof}
We can assume that $u$ and $v$ are minimal periods of $\alpha$ and $\beta$.

Let $(R_+,R_-)$ be a reduced tree-diagram with pairs of branches $u_i\rightarrow v_i$, $i\in\mathbb{N}$, such that for some $k\in\{2,\dots,n-1\}$, $u_k=u$ and $v_k=v$.
We let $f_1=(T_+^1,T_-^1)$ be the copy of $(R_+,R_-)$ in $F_{[p]}$. The pairs of branches of $(T_+^1,T_-^1)$ are of the form $pu_i\rightarrow pv_i$ for $i=1,\dots,n$, along with pairs of branches $b\rightarrow b$ for some words
$b$ such that $p$ is not a prefix of $b$.

We construct tree-diagrams $(T_+^j,T_-^j)$ inductively for $j\in\mathbb{N}$, so that for each $j$, the tree-diagram $(T_+^j,T_-^j)$ has a pair of branches $pu^j\rightarrow pv^j$.
For $j=1$, we are done. If $(T_+^j,T_-^j)$ is already constructed,  we attach a copy of the tree $R_+$ to the end of the branch $pu^{j}$ of $T_+^{j}$ and a copy of the tree $R_-$ to the end of the branch $pv^{j}$ of the tree $T_-^{j}$. We let $(T_+^{j+1},T_-^{j+1})$ be the resulting tree-diagram.

Let $f_j=(T_+^j,T_-^j)$. 
We let $f$ be the ``limit'' of the tree-diagrams $f_j$ for $j\in\mathbb{N}$, in the following sense. 
The triple $f$ can be defined by listing its  pairs of branches $p\to q$ where $p$ and $q$ are either both finite or both infinite, and $\phi(p)=q$.

The pairs of branches of $(T_+,T_-,\phi)$ are $pu^ju_i\rightarrow pv^jv_i$ for $j\ge 0$, $i\in\{1,\dots,n\}\setminus\{k\}$;  pairs of branches $b\rightarrow b$ for some words $b$ such that $p$ is not a prefix of $b$ (namely, the pairs of branches of $(T_+^1,T_-^1)$ of this form) and the pair of infinite branches $pu^{\mathbb{N}}\rightarrow pv^{\mathbb{N}}$. It is obvious that $f\in \F$ and that, as an element of $\Homeo([0,1])$,  $f$ has support in $[p]$.


We claim that ${H_{\{\alpha\}}}^f=H_{\{\beta\}}$. It is enough to  show the inclusion $\subseteq$ (to prove the other inclusion one would just replace $f$ by $f\iv$).  Let $g\in H_{\{\alpha\}}$. We view $f$ as a function from $\Homeo([0,1])$. By definition, $f(\alpha)=\beta$.
It is obvious that $g^f$ fixes $\beta$ since $g$ fixes $\alpha$ and $f(\alpha)=\beta$.

It remains to show that $g^f\in F$, that is, as a pair of trees, $g^f$ has only finitely many pairs of branches. To prove that, it is enough to find a partition of $[0,1]$ into a finite number of intervals $I_1,...,I_s$ with finite dyadic endpoints and  elements $g_1, ..., g_s$ from $F$ such that for every $k=1,...,s$, $f$ coincides with $g_k$ on $I_k$.

Since $g(\alpha)=\alpha$, there are natural numbers $m_1, m_2\ge 1$ such that $g$ has a pair of branches
$pu^{m_1}\rightarrow pu^{m_2}$ (see Lemma \ref{rational}). It is easy to see that by the definition of $f$, on
$[0,1]\setminus [pv^{m_1}]$ (which is a union of two intervals), the function $g^f$ coincides with $f_{m_1}^{-1}gf_{m_2}$. Consider a
number $x$ from $[pv^{m_1}]$. Then $x=.pv^{m_1}\omega$ for some infinite binary word $\omega$.
%
%
It is easy to  check that $g^f$ maps $x$ to $y=.pv^{m_2}\omega$. Thus if $g'$ is any element from $F$ which has the pair of branches $pv^{m_1} \to pv^{m_2}$, then  $g^f$ coincides with $g'$ on $[pv^{m_1}]$. Therefore $g^f\in F$.
\end{proof}

\begin{Lemma}\label{222}
Let $\alpha$ and $\beta$ be two irrational numbers in $(0,1)$. Then $H_{\{\alpha\}}$ and $H_{\{\beta\}}$ are conjugate in $\F$. If $\alpha$ and $\beta$ belong to a dyadic interval $[p]$, then the conjugator of $H_{\{\alpha\}}$ and $H_{\{\beta\}}$ in $\F$ can be taken to have support in $[p]$.
\end{Lemma}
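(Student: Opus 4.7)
The plan is to mimic the construction of Lemma \ref{111} but without the periodic iteration. Write $\alpha=.a_1a_2a_3\ldots$ and $\beta=.b_1b_2b_3\ldots$ as their (unique) infinite binary expansions; since $\alpha$ and $\beta$ are irrational, each of these words contains both digits $0$ and $1$ infinitely often. I would define $T_+$ to be the infinite binary tree consisting of the single infinite branch $a_1a_2a_3\ldots$ together with, at each depth $k\ge 1$, a leaf $a_1\ldots a_{k-1}\bar a_k$ (the sibling of the vertex $a_1\ldots a_k$ on the infinite path); define $T_-$ analogously from $\beta$. Each tree has infinitely many finite leaves on the left of the infinite branch (those at depths where the path turns right, i.e.\ $a_k=1$ resp.~$b_k=1$) and infinitely many on the right. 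Enumerate the left leaves of $T_{\pm}$ in left-to-right order as $LL_1^{\pm}<LL_2^{\pm}<\cdots$ and the right leaves from right to left as $RR_1^{\pm},RR_2^{\pm},\ldots$, and define the bijection $\phi$ by sending the infinite branch of $T_+$ to that of $T_-$, $LL_j^+\mapsto LL_j^-$, and $RR_j^+\mapsto RR_j^-$ for every $j\ge 1$. Then $\phi$ is order-preserving and maps the infinite branch to the infinite branch, so $f:=(T_+,T_-,\phi)$ lies in $\F$ (Definition \ref{d:9}) and, as a homeomorphism of $[0,1]$, sends $\alpha$ to $\beta$.

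For the support claim, suppose $\alpha,\beta\in[p]$ with $|p|=\ell$. Then the first $\ell$ digits of $\alpha$ and $\beta$ both equal $p$, so the finite leaves of $T_+$ lying at depth $\le\ell$ coincide word-for-word with those of $T_-$ at the same depth, and under the enumeration above $\phi$ fixes each of them. These leaves cover exactly $[0,1]\setminus[p]$, so $f$ is the identity there and $\supp(f)\subseteq[p]$.

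To prove that ${H_{\{\alpha\}}}^f=H_{\{\beta\}}$, it suffices to verify the inclusion $\subseteq$ (the reverse follows by applying the same argument with $f$ replaced by $f^{-1}$ and the roles of $\alpha$, $\beta$ swapped). Given $h\in H_{\{\alpha\}}$, Corollary \ref{cor:irr} yields a dyadic neighborhood $N_\alpha=[a_1\ldots a_k]$ of $\alpha$ on which $h$ is the identity. The key structural observation is that $[0,1]\setminus N_\alpha$ is the union of the $k$ dyadic intervals associated with the leaves of $T_+$ of depth $\le k$, and on each such interval $f$ is a single linear map onto a dyadic interval of $T_-$ with slope a power of $2$. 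Thus $f$ restricted to $[0,1]\setminus N_\alpha$ is piecewise linear with finitely many pieces, dyadic break points and slopes powers of $2$, and $N_\beta:=f(N_\alpha)$ is a closed interval containing $\beta$ in its interior with dyadic endpoints. Since $fhf^{-1}$ is the identity on $N_\beta$ and a composition of three finitely piecewise-linear $F$-type maps outside $N_\beta$, with continuity across $\partial N_\beta$ automatic from continuity of $f,h,f^{-1}$, it belongs to $F$; as it also fixes $\beta$, it lies in $H_{\{\beta\}}$.

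The main obstacle is not the construction but verifying that conjugation by $f$ lands in $F$ rather than in the larger group $\F$. This hinges on the fact that although $f$ itself has infinitely many linear pieces accumulating at $\alpha$, every $h\in H_{\{\alpha\}}$ is trivial on a dyadic neighborhood of $\alpha$ by irrationality; that neighborhood precisely swallows the wild part of $f$, leaving only the finite piecewise-linear data outside $N_\alpha$ to be composed.
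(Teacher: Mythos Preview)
Your proof is correct and follows essentially the same strategy as the paper: construct an element $f\in\F$ whose unique pair of infinite branches is $\alpha\to\beta$, then use Corollary \ref{cor:irr} to see that any $h\in H_{\{\alpha\}}$ is trivial on a dyadic neighborhood $[a_1\ldots a_k]$ of $\alpha$, so that only finitely many linear pieces of $f$ are involved in the conjugation and $h^f\in F$. The only difference is cosmetic: you build $f$ explicitly as the minimal ``spine'' tree-diagram along $\alpha$ and $\beta$, whereas the paper obtains $f$ as the limit of a compatible sequence $f_i\in F$ (via Remark \ref{r:choice}) and then argues $h^f=h^{f_i}$ for large $i$; your direct verification that $h^f$ is piecewise linear with dyadic breaks is equivalent to this.
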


\begin{proof}
The proof is similar to the proof of Lemma \ref{der}. Let $\alpha=.\omega$ and $\beta=.\omega'$. For $i\in\mathbb{N}$, let $u_i$ (resp. $v_i$) be the prefix of $\omega$ (resp. $\omega'$) of length $i$. We note that $[u_i]$ (resp. $[v_i]$) is a nested sequence of dyadic intervals whose intersection is $\{\alpha \}$ (resp. $\{\beta\}$).

We construct a sequence of elements $f_i=(T_+^i,T_-^i)$ in $F$ such that for any $i$, $(T_+^i,T_-^i)$ has the pair of branches $u_i\rightarrow v_i$. For $i=1$ we let $(T_+^1,T_-^1)$ be a tree-diagram with a pair of branches $u_1\rightarrow v_1$. For any $i>1$, we let $(T_+^i,T_-^i)$ be a tree-diagram which has a pair of branches $u_i\rightarrow v_i$ and has all the pairs of branches of $(T_+^{i-1},T_-^{i-1})$, other than the pair of branches $u_{i-1}\rightarrow v_{i-1}$. This is possible by Remark \ref{r:choice}.

We let the triple $f=(T_+,T_-,\phi)$ be defined (as a collection of pairs of branches) as follows.
The only pair of infinite branches in $f$ is  $\omega\rightarrow \omega'$. The pairs of finite branches are, all the pairs of branches $u\rightarrow v$ of $(T_+^i, T_-^i)$ other than the pair $u_i\rightarrow v_i$, for  each $i\in\mathbb{N}$.  Note that every finite pair of branches of $f$ occurs in all but finitely many tree-diagrams $(T_+^i,T_-^i)$. Clearly, for $i\in\mathbb{N}$, $f$ coincides with $f_i$ on $[0,1]\setminus[u_i]$.

We claim that $H_{\{\alpha\}}^{f}=H_{\{\beta\}}$. It is enough to  show the inclusion $\subseteq$.  Let $g\in H_{\{\alpha\}}$. By definition, $f(\alpha)=\beta$. Thus, $g^f$ fixes $\beta$ since $g$ fixes $\alpha$. To show that $g^f\in F$, we note that by Corollary \ref{cor:irr}, since $g$ fixes $\alpha$, it fixes a neighborhood $[u_i]$ of $\alpha$ for a large enough $i\in\mathbb{N}$. Since $f$ and $f_i$ coincide on $[0,1]\setminus[u_i]$, in particular they coincide on the support of $g$. It follows that $g^f=g^{f_i}\in F$.

If $\alpha$ and $\beta$ belong to $[p]$, then for some $j\in\mathbb{N}$ we have $u_j=v_j=p$ and one can construct the functions $f_1,\dots,f_j$ in the sequence above so that they act as the identity on $[0,1]\setminus[p]$.
Continuing the construction of the sequence $\{f_i\}$ as above, one gets that the limit function $f$ has support in $[p]$.
\end{proof}

\begin{proof}[Proof of Theorem \ref{t:pl}]
Let $U=U_1\cup U_2\cup U_3$ be the natural partition of $U$.
For each $\beta\in U_2\cup U_3$ we choose a small  dyadic interval $[p_{\beta}]$ such that $\beta\in [p_\beta]$ (i.e., $\beta=.p_\beta\omega$ for some infinite binary word $\omega$). We can assume that the intervals $[p_\beta]$, $\beta\in U_2\cup U_3$ are pairwise disjoint and that each of these intervals contains exactly one element of $U\cup \{0,1\}$, the  number $\beta$.

Since $\tau(U)\equiv\tau(V)$, using conjugation in $F$ if necessary (as in the proof of Theorem \ref{th:m1}), we can assume that the set $V=V_1\cup V_2\cup V_3$ satisfies the following conditions.
\begin{enumerate}
\item[(1)] $V_1=U_1$.
\item[(2)] For each $\delta\in V_2\cup V_3$, $\delta\in [p_\beta]$, where $\beta$ occupies the same position in the ordered set $U$ as $\delta$ does in $V$.
\end{enumerate}

By Lemmas \ref{111} and \ref{222}, for each $\beta\in U_2\cup U_3$ there is a homeomorphism $f_\beta\in \F$ with support in $[p_\beta]$ such that ${H_{\{\beta\}}}^{f_\beta}=H_{\{\delta\}}$. 
It is easy to check that $f=\prod_{\beta\in U_2\cup U_3} f_{\beta}\in \F$ conjuagtes $H_U$ to $H_V$.
\end{proof}

\subsection{$\F$ and $\bar F$ are not amenable}

\begin{Theorem}\label{t:freesub}
The group $\F$ contains a non-Abelian free subgroup.
\end{Theorem}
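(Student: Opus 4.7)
The approach is to construct two elements $a,b\in\F$ generating a non-Abelian free subgroup via the classical ping-pong lemma applied to their action on $[0,1]$. The critical additional flexibility provided by $\F$ over $F$ is that $\F$-elements may have infinite branches in their tree diagrams terminating at irrational points, which lets them act nontrivially arbitrarily close to an irrational fixed point; by Corollary~\ref{cor:irr}, this is impossible in $F$. This non-piecewise-linear behaviour at irrational points is the key ingredient that circumvents the Brin--Squier obstruction (no free non-Abelian subgroups in strictly PL homeomorphism groups of the interval) and permits a Schottky-type construction.

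Concretely, one picks an irrational $\alpha\in(0,1)$, selects along the binary expansion of $\alpha$ a strictly increasing dyadic sequence $u_n\nearrow\alpha$ and a strictly decreasing dyadic sequence $v_n\searrow\alpha$ (obtained from prefixes of $\alpha$'s binary expansion), and defines an element $a\in\F$ with support in a prescribed interval containing $\alpha$ that maps $[u_n,u_{n+1}]$ linearly onto $[u_{n-1},u_n]$ on the left of $\alpha$ and $[v_{n+1},v_n]$ linearly onto $[v_n,v_{n-1}]$ on the right. When the consecutive interval ratios are powers of two, $a$ is piecewise linear with dyadic slopes and belongs to $\F$ via a triple $(T_+,T_-,\phi)$ with a single infinite branch at $\alpha$ fixed by $\phi$. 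The element $a$ has $\alpha$ as a repelling fixed point, with forward orbits pushed outward toward the endpoints of its support. Construct $b\in\F$ in exactly the same way from a second irrational $\beta\ne\alpha$, choosing the support and auxiliary fixed points of $b$ so that $a$ and $b$ have sufficiently many distinct irrational fixed points to separate the ping-pong sets.

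After passing to sufficiently high powers $a^N,b^N$, one identifies four pairwise disjoint open subsets $A^+,A^-,B^+,B^-\subset[0,1]$ --- neighborhoods of $a$'s attractors, of $\alpha$, of $b$'s attractors, and of $\beta$ respectively --- for which the ping-pong conditions hold: $a^{\pm n}$ sends the complement of $A^\mp$ into $A^\pm$ for all $n\ge 1$, and similarly for $b$. The classical ping-pong lemma then gives that $\langle a^N,b^N\rangle\le\F$ is non-Abelian free of rank two.

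The principal obstacle is geometric rather than algebraic: because every element of $\F$ fixes the endpoints $0$ and $1$, those cannot serve as the distinguishing attractors for $a$ and $b$. One must therefore arrange $a$ and $b$ to have additional irrational attracting fixed points interior to $(0,1)$, with supports that overlap in a controlled way (disjoint supports would give commuting elements). Ensuring that all the relevant fixed points (the attractors for each of $a,b$ together with the repellers $\alpha,\beta$) are pairwise distinct and that the ping-pong neighborhoods fit into the overlap regions of the supports is a delicate but feasible dynamical configuration; once it is set up, the verification of the ping-pong conditions for $a^N,b^N$ is a routine calculation using the contracting and expanding nature of the maps near their fixed points.
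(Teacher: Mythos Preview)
Your ping-pong setup cannot be realized. The condition ``$a^{n}$ sends the complement of $A^{-}$ into $A^{+}$ for all $n\ge 1$'' forces $0,1\in A^{+}$, since $0$ and $1$ are fixed by every element of $\F$ and do not lie in the small neighbourhood $A^{-}$ of $\alpha$; by symmetry $0,1\in B^{+}$, contradicting the disjointness of $A^{+}$ and $B^{+}$. Introducing interior irrational attractors does not help: even the weaker requirement $a^{\pm n}(B^{+}\cup B^{-})\subset A^{\pm}$, $b^{\pm n}(A^{+}\cup A^{-})\subset B^{\pm}$ is infeasible. If $a$ has repeller $\alpha$ flanked by attractors $\gamma_1<\alpha<\gamma_2$ and $b$ has repeller $\beta$ flanked by $\delta_1<\beta<\delta_2$, then $a^{-n}(B^{\pm})\subset A^{-}$ forces $\{\delta_1,\beta,\delta_2\}\subset(\gamma_1,\gamma_2)$, while $b^{-n}(A^{\pm})\subset B^{-}$ forces $\{\gamma_1,\alpha,\gamma_2\}\subset(\delta_1,\delta_2)$; these two nestings are incompatible on a linearly ordered set. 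This obstruction is intrinsic to actions on a closed interval and is not evaded by making the fixed points irrational: piecewise-linear homeomorphisms can exhibit exactly the same pattern of attracting and repelling fixed points (at irrational locations, once arbitrary slopes are allowed), so if your ping-pong hypotheses held they would equally produce a non-abelian free subgroup of $\mathrm{PL}_+([0,1])$, contrary to the Brin--Squier theorem. The configuration is therefore not ``delicate but feasible''---it is impossible.

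For comparison, the paper's proof uses no hyperbolic dynamics at all. One enumerates the elements of $F$ as $\psi(1),\psi(2),\dots$ so that every finite tuple occurs as a block of consecutive values, and constructs $g\in\F$ whose restriction to each interval $[0^{k}1]$ is the natural copy of $\psi(k)$ (so $g$ has a single infinite branch, at $0$). Taking $f\in F$ with a pair of branches $00\to 0$, one checks that $g^{f^{j}}$ acts on $[0^{k}1]$ as the copy of $\psi(k+j)$. Any relation $w(g,f)=1$ must have zero total $f$-exponent, hence is a word $w'$ in finitely many conjugates $g^{f^{r_i}}$; restricting to $[0^{k}1]$ for a suitably chosen $k$ turns this into an identity $w'(f_1,\dots,f_\ell)=1$ for an \emph{arbitrary} tuple $(f_1,\dots,f_\ell)\in F^{\ell}$, contradicting the fact that $F$ satisfies no group law.
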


\proof
Let $O$ be the set of all finite tuples of elements of $F$. Since $O$ is countable, there is a bijection $\phi\colon\mathbb{N}\to O$.
One can list the elements of $F$ using the function $\phi$, by listing the elements of the tuple $\phi(1)$, followed by the elements of the tuple $\phi(2)$ and so on. Clearly, every element of $F$ is listed infinitely many times.
Also, if we associate a function $\psi\colon \mathbb{N}\to F$ to this listing, then for any tuple $t$ in $O$ of length $k$ there is some $n\in\mathbb{N}$ such that $t=(\psi(n),\dots,\psi(n+k-1))$.
For each $i\in\mathbb{N}$, we let $(R_+^i,R_-^i)$ be the reduced tree-diagram of $\psi(i)$.

Let $T$ be the minimal infinite binary tree with $0^\N$ as a branch
(i.e., the branches of $T$ are $0^\N$, $0^k1$ for all $k\ge 0$).
We construct an element of $\F$ as follows.
Let $T_+, T_-$ be two copies of $T$.
For each $k>0$ we attach to the tree $T_+$ the tree $R_+^k$ at the end of the branch $0^k1$.
Similarly, to the end of the branch $0^k1$ of $T_-$ we attach the tree $R_-^k$.
We denote the resulting tree-diagram with the natural mapping $\phi$ by $(T_+,T_-,\phi)$ and let $g$ be the function in $\F$ represented by it.
We note that for all $k\ge 0$, if $\alpha\in [0^k1]$ then $g(\alpha)\in [0^k1]$.
In addition, for any $k\in\mathbb{N}$ and any number $\alpha=.0^k1\omega$, we have
$g(\alpha)=g(.0^k1\omega)=.0^k1\omega'$, where $(\psi(k))(.\omega)=.\omega'$.
In other words, when restricted to the interval $[0^k1]$, $g$ coincides with the copy of $\psi(k)$ in $F_{[0^k1]}$.

Let $f$ be an element of Thompson group $F$ with a pair of branches $00\rightarrow 0$.
We claim that the group generated by $g$ and $f$ is free.
Assume by contradiction that $w(x,y)$ is a word in $\{x,y,x^{-1},y^{-1}\}$ such that $w(x,y)$ is not trivial in the free group over $\{x,y\}$ and such that $w(g,f)=1$.
Since for every $k>1$, $g$ maps the interval $[0^k1]$ onto itself and $f$ maps $[0^k1]$ onto $[0^{k-1}1]$, it is easy to see that the sum of powers of $y$ in the word $w(x,y)$ must be $0$.
Thus, $w(x,y)$ is equivalent in the free group to a word in conjugates of $x$ by powers of $y$.
Clearly, we can assume that all conjuagtors are positive powers of $y$.

We note that for all $k,j\in\mathbb{N}$, the function $g^{f^j}$ restricted to $[0^k1]$ coincides with the copy of $\psi(k+j)$ in $F_{[0^k1]}$.
Indeed, for any $\alpha=.0^k1\omega$,

$$\begin{array}{r}
g^{f^j}(\alpha)=f^j(g(f^{-j}(\alpha)))=f^j(g(f^{-j}(.0^k1\omega)))=f^j(g(.0^{k+j}1\omega))=f^j(.0^{k+j}1\omega')\\=.0^k1\omega',
\end{array}$$
where $\omega'$ here is such that $(\psi(k+j))(.\omega)=.\omega'$.

One can think of the situation as follows. If one fixes $k\in\mathbb{N}$ and considers only the dyadic interval $[0^k1]$ , then for every $j>0$, the function $g^{f^j}$ restricted to the chosen interval behaves exactly like (the relevant copy of) the function $\psi(k+j)$ of $F$.
As noted above, the word $w(f,g)$ can be viewed as a word $w'$ in conjugates of $g$ by positive powers of $f$.
If $\ell$ different conjugates $g^{f^{r_1}},\dots,g^{f^{r_{\ell}}}$ participate in the word, we can replace each one of them by a letter in an alphabet $\{z_1,\dots,z_\ell\}$ and consider the resulting word $w'(z_1,\dots,z_\ell)$. Since $F$ does not satisfy any law (see \cite[Theorem 5.6.37]{Sapirbook}) there is a sequence of elements $f_1,\dots,f_{\ell}$ in $F$ such that $w'(f_1,\dots,f_{\ell})\neq 1$.
It remains to notice that since the function $\psi$ enumerates all tuples of elements of $F$, one after the other, it is possible to choose $k$ such that $\psi(k+r_i)=f_i$ for all $i=1,\dots,\ell$.
Then the word $w'$ in the conjugates $g^{f^{r_1}},\dots,g^{f^{r_{\ell}}}$, behaves exactly as the (copy of the) function
$w'(f_1,\dots,f_{\ell})$ in the interval $[0^k1]$. As noted, it is not the identity.
\endproof

\section{Open problems}\label{open}

\subsection{The number of isomorphism classes of maximal subgroups of $F$}

 Note that in every non-cyclic countable free group every maximal subgroup of infinite index is free of countable rank. Thus, even though the set of maximal subgroups of infinite index of a non-cyclic free group is of cardinality continuum, there is only one isomorphism class of these subgroups.  From the results of \cite{Sav1}  it follows that the set of maximal subgroups of infinite index of $F$ also has cardinality continuum (it contains all subgroups $H_{\{\alpha\}}$, $\alpha\in (0,1)$). Still, the results of this paper and \cite{GolanSapir} show that,  up to isomorphism, only $4$ maximal subgroups of  $F$ of infinite index are known. The representatives of isomophism classes are $H_{\left\{\frac12\right\}}$, $H_{\left\{\frac13\right\}}$, $H_{\left\{\frac{\sqrt{2}}{2}\right\}}$ and a subgroup that is isomorphic to the Thompson group $F_3$
(which consists of piecewise linear functions $[0,1]\to [0,1]$ where slopes are powers of $3$  and break points are 3-adic). In \cite{GolanSapir} we also showed how to implicitly construct many other maximal subgroups of $F$, but we do not know whether these subgroups are isomorphic to each other or to some of the 4 maximal subgroups listed above.
It is quite possible, that one can construct copies of $F_n$ as maximal subgroups of infinite index in $F$ for every $n>2$ . Thus up to isomorphism, there should be at least countably many maximal subgroups of $F$ of infinite index.

\begin{Problem} Is the set of isomorphism classes of maximal subgroups of $F$ countable?
\end{Problem}

Note that a similar problem is interesting for many other groups (say, $\mathrm{SL}_n(\Z)$ whose maximal subgroups have been extensively studied by Margulis and Soifer). Note also that for some finitely generated groups, say, direct products of two non-cyclic free groups, the set of isomorphism classes of maximal subgroups is uncountable \cite{MO1}.

\subsection{Distortion and closed subgroups}

Closed subgroups of $F$ have been defined in \cite{GolanSapir}. The first author \cite{Golan16} showed that one can alternatively define closed subgroups as follows.

\begin{Definition} Let $h\in F$. Then {\em components} of $h$ are all elements of $F$ that coincide with $h$ on a closed interval $[a,b]$ with finite dyadic $a,b$ and are identity outside $[a,b]$ (in that case $h$ necessarily fixes $a$ and $b$). A subgroup $H\le F$ is \emph{closed} if for any $h\in H$, the subgroup $H$ contains all components of $h$.
\end{Definition}

It is clear from this definition that for every (not necessarily finite) set $U\subseteq (0,1)$ the stabilizer $H_U$ of $U$ is closed. It is quite possible that Theorem \ref{t:dist} can be generalized to arbitrary closed subgroups of $F$.

\begin{Problem}\label{p:56} Is it true that every finitely generated closed subgroup of $F$ is undistorted?
\end{Problem}

Note that it is an open problem (see \cite{GolanSapir}) whether all maximal subgroups of infinite index in $F$ are closed. If the answer is ``yes'' and the answer to Problem \ref{p:56} is affirmative, then all finitely generated maximal subgroups of $F$ would be undistorted (because subgroups of finite index in any finitely generated group are obviously undistored).

Note also that by \cite{GolanSapir} the distortion function of every finitely generated closed subgroup of $F$ is recursive because the membership problem for every such subgroup is decidable.

\subsection{Subgroups of quasi-finite index} The examples of Jones' subgroup $\overrightarrow F$ and Savchuk's subgroups $H_U$ show that subgroups of $F$ are of quasi-finite index surprisingly often. Using the 2-core of subgroups of $F$ as defined in \cite{GolanSapir}, one can construct more examples of subgroups of $F$ of infinite index which have quasi-finite index in $F$.

\begin{Problem} Is there an algorithm to check if a finite set of elements of $F$ generates a maximal subgroup or a subgroup of quasi-finite index?
\end{Problem}

Note that the first author has found an algorithm checking whether a finite set of elements of $F$ generates the whole $F$ \cite{Golan16}.

%
%

\end{document}